\renewcommand*{\backref}[1]{}
\renewcommand*{\backrefalt}[4]{%
	\ifcase #1 (Not cited)%
	\or        (Cited on page~#2)%
	\else      (Cited on pages~#2)%
	\fi}
\newcommand{\thickhline}{\noalign{\hrule height 1pt}}
\setlist[enumerate,1]{label=\textbf{\arabic*.}, ref=\arabic*, leftmargin=*, itemsep=0.5ex, topsep=0.5ex}
\let\OLDthebibliography\thebibliography
\renewcommand\thebibliography[1]{
  \OLDthebibliography{#1}
  \setlength{\parskip}{2pt}
  \setlength{\itemsep}{0pt plus 0.3ex}
}
\renewenvironment{proof}[1][\proofname]{%
  \par\pushQED{\qed}%
  \normalfont\topsep6\p@\@plus6\p@\relax
  \trivlist
  \item[\hskip\labelsep
        \textbf{\textup{#1}}\@addpunct{.}]\ignorespaces
}{%
  \popQED\endtrivlist\@endpefalse
}
\newcommand{\mysubjclass}{\textup{2020} \textbf{Mathematics Subject Classification:} 22C05, 53C05, 	53C26, 53C29}
\newcommand{\mykeywords}{\textbf{Keywords:} Hypercomplex structure, Obata connection, Holonomy, Lie groups, HKT, Twisted Calabi-Yau}
\newtheorem{thm}{Theorem}[section]
\newtheorem{prop}[thm]{Proposition}
\newtheorem{lem}[thm]{Lemma}
\newtheorem{cor}[thm]{Corollary}
\newtheorem{introthm}{Theorem}
\theoremstyle{definition}
\newtheorem{rmk}[thm]{Remark}
\newtheorem{ex}[thm]{Example}
\title{The holonomy of the Obata connection on Joyce hypercomplex manifolds}
\numberwithin{equation}{section}
\newcommand{\g}{\mathfrak{g}}
\newcommand{\h}{\mathfrak{h}}
\newcommand{\bi}{\mathfrak{b}}
\newcommand{\di}{\mathfrak{d}}
\newcommand{\f}{\mathfrak{f}}
\newcommand{\R}{\mathbb{R}}
\newcommand{\C}{\mathbb{C}}
\newcommand{\SU}{\mathrm{SU}}
\newcommand{\SO}{\mathrm{SO}}
\newcommand{\Sp}{\mathrm{Sp}}
\newcommand{\G}{\mathrm{G}}
\newcommand{\w}{\wedge}
\renewcommand{\H}{\mathbb{H}}
\renewcommand{\epsilon}{\varepsilon}
\renewcommand{\phi}{\varphi}
\renewcommand{\Re}{\mathrm{Re}}
\tikzset{cross/.style={cross out, draw, 
         minimum size=2*(#1-\pgflinewidth), 
         inner sep=0pt, outer sep=0pt},
         cross/.default={3.3pt}}
\def\eqref#1{(\ref{#1})}
\title{\textbf{The holonomy of the Obata connection on Joyce hypercomplex manifolds}}
\author{Beatrice Brienza, Udhav Fowdar, Giovanni Gentili, Luigi Vezzoni}
\date{}
\begin{document}

\maketitle

\begin{abstract}
We study the holonomy of the Obata connection on Joyce hypercomplex manifolds. For all such group manifolds except \( \mathrm{SU}(2n+1) \), we show that the holonomy group is strictly contained in the quaternionic general linear group. The case of \( \mathrm{SU}(2n+1) \) is more subtle: for every \( n > 1 \), we show that there exist infinitely many Joyce hypercomplex structures with Obata holonomy strictly contained in \( \mathrm{GL}(n(n+1),\H) \). On the other hand, Soldatenkov showed that \( \mathrm{SU}(3) \) has Obata holonomy equal to \( \mathrm{GL}(2, \mathbb{H}) \)~\cite{Sol}, and we present here a new example on \( \mathrm{SU}(5)\)  with holonomy equal to \( \mathrm{GL}(6,\H) \). Finally, we investigate Joyce hypercomplex manifolds whose restricted holonomy lies in \( \mathrm{SL}(n, \mathbb{H}) \), yielding new compact examples of twisted Calabi--Yau manifolds.
\end{abstract}

\thispagestyle{firstpagefooter}
\tableofcontents

\section{Introduction}

A \emph{hypercomplex manifold} is a smooth manifold $M^{4n}$ equipped with a triple of complex structures $I, J, K$ satisfying the quaternionic relations
$$
I^2=J^2=K^2=IJK=-\mathrm{Id}\,.
$$
It was shown in \cite{Ob} that every hypercomplex manifold admits a unique torsion-free connection preserving the hypercomplex structure, known as the \emph{Obata connection}.\par
Following the work of Boyer in \cite{Bo}, hypercomplex geometry has attracted considerable attention, especially in relation to \emph{hyperhermitian} structures. In the \emph{hyperk\"ahler} case, the Obata connection coincides with the Levi-Civita connection of the underlying metric, and its holonomy group is contained in the compact symplectic group $\mathrm{Sp}(n)$. However, in the general hypercomplex case, the Obata connection is not compatible with any metric, so  its holonomy group is typically non-compact, lying in the quaternionic general linear group $\mathrm{GL}(n, \mathbb{H})$. \par
Left-invariant hypercomplex structures on Lie groups have appeared in the context of
string theory in \cite{SSTvP} and in a more mathematical framework in the seminal work of Joyce in \cite{Joy}. Joyce showed that if $G$ is a compact semisimple Lie group of rank $r$, then the compact Lie group $\mathbb{T}^{2m - r} \times G$ admits up to an $m^2$-parameter family of non-equivalent left-invariant hypercomplex structures, where $m \in \mathbb{N}$ is the number of $\mathfrak{su}(2)$ summands in the so-called Joyce decomposition of $G$ (see Section~\ref{sec:joyce}). \par
Subsequently, it was shown in \cite{DT,BGP} that any left-invariant hypercomplex structure on a compact Lie group arises via Joyce's construction, and so throughout this paper a compact Lie group endowed with such a structure will be referred to as a \emph{Joyce hypercomplex manifold}. The list of Joyce hypercomplex manifolds $\mathbb{T}^{2m - r} \times G$ with $G$ simple was given in \cite{SSTvP}:
\begin{equation} \label{eqn:classification}
\begin{aligned}
&\SU(2k+1), 
\ \ S^1 \times \SU(2k), 
\ \ \mathbb{T}^k \times \SO(2k+1), 
\ \ \mathbb{T}^k \times \Sp(k), 
\ \ \mathbb{T}^{2k}\times \SO(4k), 
\\
&
\ \  \mathbb{T}^{2k-1} \times \SO(4k+2), 
\ \ \mathbb{T}^2 \times \mathrm{E}_6,
\ \ \mathbb{T}^7 \times \mathrm{E}_7,
\ \ \mathbb{T}^8 \times \mathrm{E}_8,
\ \ \mathbb{T}^4 \times \mathrm{F}_4,
\ \ \mathbb{T}^2 \times \mathrm{G}_2.
\end{aligned}
\end{equation}
An important problem in hypercomplex geometry is determining the holonomy of the associated Obata connection. 
However, very little is known beyond the low-dimensional cases.\par
In quaternionic dimension one, the only Joyce hypercomplex manifold is the Hopf surface $S^1\times {\rm SU}(2)$. In this case, the Obata connection is flat with holonomy group $\mathbb Z$ \cite{SV}.\par
In quaternionic dimension two, the situation is significantly richer. Soldatenkov showed in \cite{Sol} that the holonomy group of the Obata connection on $\SU(3)$ is \textit{equal} to $\mathrm{GL}(2, \mathbb{H})$. The proof first involves showing that the Obata holonomy group acts irreducibly on the tangent space; the argument relies crucially on properties specific to dimension~8. Secondly, one appeals to the classification of irreducible holonomy groups of torsion-free affine connections in \cite{MS}. In the latter classification, only three candidates appear as possible subgroups of $\mathrm{GL}(n, \mathbb{H})$: $\mathrm{Sp}(n)$, $\mathrm{SL}(n, \mathbb{H})$ and $\mathrm{GL}(n, \mathbb{H})$, and it is simply a matter of eliminating the former two.\par 
It was subsequently conjectured (see e.g. \cite{SV}) that for any Joyce hypercomplex manifold in the list \eqref{eqn:classification} (with the exception of the Hopf surface) the holonomy group of the Obata connection is always equal to the full group $\mathrm{GL}(n, \mathbb{H})$. We should point out that even if $M$ is a product manifold in \eqref{eqn:classification}, the Joyce hypercomplex structure is not a product one and as such the Obata holonomy does not necessarily split. \par
Our first main result is the following theorem, which, in particular, disproves the conjectural picture recalled in the paper of Soldatenkov and Verbitsky~\cite{SV}:
\begin{introthm}\label{THM:A}
Let $M$ be a compact Lie group from the list $\eqref{eqn:classification}$, except $\mathrm{SU}(2n+1)$, and let $\nabla$ denote the Obata connection of a left-invariant hypercomplex structure on $M$. Then the Obata holonomy is a proper subgroup of  $\operatorname{GL}(n, \mathbb{H})$, i.e.
\[
\operatorname{Hol}(\nabla) \subsetneqq \operatorname{GL}(n, \mathbb{H}).
\] 
\end{introthm}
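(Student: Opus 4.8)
The strategy is to exhibit, for each Joyce hypercomplex manifold $M = \mathbb{T}^{2m-r}\times G$ other than $\mathrm{SU}(2n+1)$, a nonzero proper $\nabla$-parallel subbundle of $TM$; since the Obata holonomy preserves the hypercomplex structure, it suffices to find a proper $\H$-submodule of $\mathfrak{g}\otimes\R$ (viewed as the tangent space at the identity, with the quaternionic action of $I,J,K$) that is invariant under $\nabla$. The natural candidate is built from the abelian part of the Joyce decomposition. Recall that in Joyce's construction one writes $\mathfrak{g}\otimes\R = \bigoplus_{d\ge 0}\mathfrak{g}_d$, where $\mathfrak{g}_0$ contains a toral part and the $\mathfrak{su}(2)$-factors, and the hypercomplex structure is assembled from the $\mathrm{ad}$-action together with a choice of Cartan data. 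I would first recall the explicit formula for the Obata connection on a Lie group with left-invariant hypercomplex structure (it is given algebraically in terms of the Lie bracket and the complex structures, e.g. via the Nijenhuis-type formula $2g(\nabla_X Y, Z) = \dots$ after one trivialises using $I$), and then identify a subspace — concretely the span of the Cartan subalgebra together with the $\H$-line generated by the distinguished element used in Joyce's inductive step, or more generally the highest (or lowest) piece $\mathfrak{g}_{\ge d_{\max}}$ — and check directly that it is annihilated by the connection, i.e. $\nabla_X \mathfrak{h}' \subseteq \mathfrak{h}'$ for all $X$.

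The key structural input is that for every group in the list \emph{except} $\mathrm{SU}(2n+1)$ the toral factor $\mathbb{T}^{2m-r}$ is genuinely present (the rank exceeds the number of $\mathfrak{su}(2)$-blocks, equivalently $2m - r > 0$, which is exactly why $\mathrm{SU}(2k+1)$ — where the torus factor is absent — is the exception), and this forces the relevant $\mathrm{ad}$-eigenspace decomposition to have a block on which the bracket acts trivially in one direction. The plan is therefore: (1) set up the left-invariant Obata connection formula; (2) use the Joyce decomposition to write $\mathfrak{g}\otimes\R$ as an $\H$-module with explicit $\mathrm{ad}$-compatibilities; (3) single out the $\H$-submodule $W$ coming from the torus directions and the top graded piece, and verify $\nabla W \subseteq W$ by a bracket computation exploiting $[\mathfrak{g}_d,\mathfrak{g}_e]\subseteq\mathfrak{g}_{d+e}$ and the vanishing of brackets with the central torus; (4) conclude that $W$ integrates to a parallel subbundle, so $\operatorname{Hol}(\nabla)$ is reducible and hence a proper subgroup of $\mathrm{GL}(n,\H)$ (which acts irreducibly on $\H^n$).

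The main obstacle I anticipate is step (3): verifying parallelism of the candidate submodule is a genuine computation in the Joyce formalism, and the subtle point is choosing $W$ so that it is simultaneously $\H$-invariant and $\nabla$-invariant — the quaternionic structure mixes the graded pieces in a way that is not obviously compatible with the naive grading filtration. One likely has to take $W$ to be the $\H$-span (rather than the $\R$-span) of the torus plus a carefully chosen complement, and then the closure under $\nabla$ reduces to a handful of bracket identities in $\mathfrak{g}$, which should follow from the root-space structure and the defining relations of the Joyce hypercomplex structure. A secondary point is to confirm that $W$ is \emph{proper} and \emph{nonzero} in each case — this is where the explicit list \eqref{eqn:classification} is used, together with the observation that $\mathrm{SU}(2k+1)$ is precisely the manifold for which this construction degenerates.
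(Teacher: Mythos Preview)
Your overall strategy --- exhibit a proper nonzero $\nabla$-parallel $\mathbb{H}$-submodule of the tangent space --- is exactly what the paper does. But your identification of \emph{which} submodule to use, and of the structural reason distinguishing $\mathrm{SU}(2n+1)$, is off in a way that would make step (3) fail.

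You propose taking $W$ to be (the $\mathbb{H}$-span of) the torus directions, justified by the observation that $\mathbb{T}^{2m-r}$ is nontrivial precisely when $G\neq\mathrm{SU}(2n+1)$, and that brackets with central elements vanish. The problem is that the Obata connection is not $\mathrm{ad}$: with the explicit formula
\[
\nabla_X Y = \tfrac12\bigl([X,Y]+I[IX,Y]-J[X,JY]+K[IX,JY]\bigr),
\]
one finds (this is the paper's Lemma~\ref{lem:nablae1}) that $\nabla_X e_1^i = -X$ for every $X\in\mathfrak{h}_i\oplus\mathfrak{f}_i$ and $\nabla_X e_1^i=0$ otherwise, \emph{regardless} of whether $e_1^i$ lies in the torus or in $\mathfrak{b}$. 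Since $Je_1^i=e_3^i\in\mathfrak{d}_i$ is not central, the centrality of $e_1^i$ buys you nothing. In particular $\mathfrak{h}_i=\langle e_1^i\rangle_{\mathbb{H}}$ is $\nabla$-invariant if and only if $\mathfrak{f}_i=0$, and the $\mathbb{H}$-span of the torus directions is \emph{not} parallel whenever the corresponding $\mathfrak{f}_i$ are nonzero. Your proposed bracket relation $[\mathfrak{g}_d,\mathfrak{g}_e]\subseteq\mathfrak{g}_{d+e}$ also does not hold: the Joyce decomposition is not a $\mathbb{Z}$-grading, and the actual commutation relations (the paper's \eqref{eqn:joyce1}--\eqref{eqn:joyce4} and Lemma~\ref{Lem:structure}) are more delicate.

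The paper's actual mechanism is: the parallel submodule is $\mathfrak{h}_i$ for some index $i$ with $\mathfrak{f}_i=0$ (Lemma~\ref{Lem:technical}), and the work is a case-by-case verification, via extended Dynkin diagrams and the Wolf-space description of the first step of the Joyce decomposition, that every simple $G$ in the list other than $\mathrm{SU}(2n+1)$ has at least one such trivial $\mathfrak{f}_i$. The coincidence you noticed (torus absent $\Leftrightarrow$ $G=\mathrm{SU}(2n+1)$) is real but is not the operative mechanism; what actually singles out $\mathrm{SU}(2n+1)$ is that its recursive Joyce decomposition terminates in $\mathfrak{su}(3)$ rather than in $\mathfrak{su}(2)$, so no $\mathfrak{f}_i$ ever vanishes.
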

Recall that any left-invariant hypercomplex structure on $M$ arises via Joyce's construction. Furthermore, on each manifold $M$, there exist in general infinitely many non-equivalent Joyce hypercomplex structures, and Theorem \ref{THM:A} holds for the Obata connection associated to \emph{each} of these structures.   
The key result to proving the theorem is Lemma~\ref{Lem:technical}, which provides a sufficient condition for the existence of a $\nabla$-parallel subbundle in terms of the Joyce decomposition of $M$. This condition depends on geometric properties of the simple Lie group $G$ only and can be verified directly from its Dynkin diagram. 
Determining the holonomy, however, is still a rather difficult problem. In Section \ref{Subsec:Sp2}, we compute explicitly the holonomy algebra for $\mathbb{T}^2 \times \mathrm{Sp}(2)$. We show that it is an $11$-dimensional subalgebra of the Lie algebra of quaternionic lower triangular matrices (Theorem \ref{thm: sp2T2}). \par

The case of $\mathrm{SU}(2n+1)$ turns out to be rather peculiar and we treat it separately. We show that for $n \geq 2$ there always exist many Joyce hypercomplex structures on $\mathrm{SU}(2n+1)$ for which the Obata holonomy is \textit{strictly} contained in $\mathrm{GL}(n(n+1),\mathbb{H})$ (Theorem \ref{thm:su2n+1}). However, there appear to also exist hypercomplex structures for which the holonomy group is full; we provide such an example on $\mathrm{SU}(5)$ (Theorem \ref{thm: su5}). 
We conjecture that there exist many more such examples, in contrast to those in Theorem \ref{thm:su2n+1}. The above dichotomy does not occur on $\SU(3)$ since the whole $1$-parameter family of Joyce hypercomplex structures have full holonomy \cite{Sol}.

\medskip

It was shown in \cite{BDV,SV} that no Joyce hypercomplex manifold has holonomy group contained in $\mathrm{SL}(n,\mathbb{H})$. However, there are many examples for which the \emph{restricted} holonomy lies in $\mathrm{SL}(n,\mathbb{H})$. For instance, we show that this occurs whenever the abelian summand $\bi$ in the Joyce decomposition of $G$ vanishes (see Section~\ref{sec:joyce}). In that case, $M$ belongs to the list: 
\begin{equation}\label{list}
\mathbb{T}^k\times \mathrm{SO}(2k+1),
\ \  \mathbb{T}^{2k}\times \mathrm{SO}(4k),\ \ \mathbb{T}^k\times \mathrm{Sp}(k), \ \mathbb{T}^7\times \mathrm{E}_7, \ \ \mathbb{T}^8\times \mathrm{E}_8, \ \ \mathbb{T}^4\times \mathrm{F}_4, \ \ \mathbb{T}^2\times \mathrm{G}_2.
\end{equation}

We show that the corresponding Joyce hypercomplex manifold $\mathbb{T}^{2m - r} \times G$ admits left-invariant solutions to the twisted Calabi--Yau system introduced in \cite{GRST}. We recall that a {\em twisted Calabi-Yau structure} on a complex manifold $(M,I)$ is a Hermitian metric $g$ together with a complex volume form $\Psi$ of constant norm such that 
\begin{equation}\label{eqtCY}
\partial \bar\partial \omega=0\,,\quad d\Psi=\theta \wedge \Psi\,,\quad d\theta=0\,,
\end{equation}
where $\omega:=g(I\cdot,\cdot)$ denotes the fundamental $2$-form and $\theta:=Id^{*}\omega$ is the Lee form. Twisted Calabi-Yau manifolds yield solutions to the {\em twisted Hull-Strominger system} \cite{GRST}. 
Known examples include Calabi-Yau manifolds, 
quaternionic Hopf surfaces, and ${\rm SU}(3)$-structures on $\mathbb{T}^3\times S^3$ and on $M\times \R$, where $M$ is a Sasakian $5$-manifold.\par 
Our second main result is the following theorem, which, in particular, provides new examples of twisted Calabi-Yau manifolds:
\begin{introthm}[Theorem \ref{main2}] \label{THM:B}
Let $M$ belong to the list $\eqref{list}$. Then any left-invariant hypercomplex structure $(I,J,K)$ on $M$ has restricted Obata holonomy contained in ${\rm SL}(n,\H)$. Moreover, $(M,I)$ admits a left-invariant solution to the twisted Calabi-Yau system $\eqref{eqtCY}$.  
\end{introthm}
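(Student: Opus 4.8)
The proof divides into the holonomy statement, which rests on the Nomizu description of the Obata connection, and the construction of the twisted Calabi--Yau datum, which builds on it.

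\textbf{Restricted holonomy.} The plan is to reduce $\operatorname{Hol}^{0}(\nabla)\subseteq\mathrm{SL}(n,\H)$ to the vanishing of a trace functional of the Nomizu operator. Using $\mathrm{SL}(n,\H)=\mathrm{GL}(n,\H)\cap\mathrm{SL}(2n,\C)$ together with the fact that $\det_{\C}$ maps $\mathrm{GL}(n,\H)$ onto $\R^{>0}$, the quotient $\mathrm{GL}(n,\H)/\mathrm{SL}(n,\H)\cong\R^{>0}$ is contractible, so $\operatorname{Hol}^{0}(\nabla)\subseteq\mathrm{SL}(n,\H)$ if and only if the connection induced by $\nabla$ on $K_{I}=\Lambda^{2n,0}_{I}$ has vanishing curvature $\rho$. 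Since $\nabla,I,J,K$ are left invariant on $M=\mathbb{T}^{2m-r}\times G$, writing $\Lambda\colon\h\to\mathfrak{gl}(\h)$ for the Nomizu operator one obtains $\rho(X,Y)=\operatorname{tr}_{\R}\Lambda_{[X,Y]}$ up to a nonzero constant; as $\h=\R^{2m-r}\oplus\g$ is a direct sum of ideals with $[\h,\h]=[\g,\g]=\g$, this vanishes exactly when the functional $Z\mapsto\operatorname{tr}_{\R}\Lambda_{Z}$ vanishes on $\g$. I would then insert the explicit Joyce Nomizu operator recalled in Section~\ref{sec:joyce}: modulo the unimodular cancellation $\operatorname{tr}(\mathrm{ad}_{Z})=0$, the remaining contribution to $\operatorname{tr}_{\R}\Lambda_{Z}$ for $Z\in[\g,\g]$ is a pairing with a fixed element of the abelian summand $\bi$ of the Joyce decomposition, hence it vanishes precisely when $\bi=0$ — that is, exactly for the manifolds in~\eqref{list}. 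This proves the first assertion.

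\textbf{The twisted Calabi--Yau datum.} For the metric take $g$ to be the left-invariant hyperhermitian metric of the Joyce construction, flat on $\mathbb{T}^{2m-r}$ and a multiple of the negative Killing form on $G$; this is bi-invariant, so its Bismut connection is the flat $(-)$-connection $\nabla^{-}$ of the group, whose torsion is (a multiple of) the Cartan $3$-form. As the Cartan form is bi-invariant it is closed, so $dd^{c}\omega=0$, i.e. $\partial\bar\partial\omega=0$ for $\omega=g(I\cdot,\cdot)$. For the complex volume form take a left-invariant generator $\Psi$ of $\Lambda^{2n,0}_{I}$, normalized compatibly with the quaternionic structure; it is of constant norm, and since $\Lambda^{2n+1,0}_{I}=0$ we have $d\Psi=\Psi\wedge\gamma$ for a unique left-invariant $(0,1)$-form $\gamma$, whence $d\Psi=\theta\wedge\Psi$ with $\theta:=\gamma+\bar\gamma$ real and left invariant. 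Because $\nabla$ is torsion-free, $d\Psi$ equals the alternation of $\nabla\Psi$, so $\gamma$ is the $(0,1)$-part of the Obata connection form on $K_{I}$ in the gauge $\Psi$; the flatness $\rho=0$ from the first part, combined with the fact that $\operatorname{Hol}(\nabla)\subseteq\mathrm{GL}(n,\H)$ renders this connection form real in a suitable gauge, yields $d\theta=0$ after a short computation with the $(1,0)/(0,1)$-bigrading on $\h^{\C}$. Finally one identifies $\theta$ with the Lee form $Id^{*}\omega$: since $(g,I,J,K)$ is HKT the Lee forms of $I,J,K$ coincide, and since the left-invariant $\Psi$ is $\nabla^{-}$-parallel one checks on $\h$ that $Id^{*}\omega=\theta$. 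Hence $(\omega,\Psi,\theta)$ is a left-invariant solution of~\eqref{eqtCY}.

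\textbf{Main obstacle.} The weight of the argument is carried by two points. The first is the trace computation: one must unwind the Joyce Nomizu operator across the $\mathfrak{su}(2)$-blocks precisely enough to see that $Z\mapsto\operatorname{tr}_{\R}\Lambda_{Z}$ restricted to $[\g,\g]$ is governed by $\bi$; the bookkeeping underlying Lemma~\ref{Lem:technical} and Theorem~\ref{THM:A} should supply the framework. The second is the identification $\theta=Id^{*}\omega$: both are explicit left-invariant $1$-forms supported in the torus directions and one must match them exactly, which is where the HKT property of the bi-invariant metric and the $\nabla^{-}$-parallelism of $\Psi$ are used. By contrast, $\partial\bar\partial\omega=0$ is soft, being the classical pluriclosedness of bi-invariant Hermitian metrics on compact Lie groups with left-invariant complex structure.
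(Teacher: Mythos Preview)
Your overall architecture matches the paper's: reduce $\operatorname{Hol}^0(\nabla)\subseteq\mathrm{SL}(n,\H)$ to the vanishing of the trace functional $Z\mapsto\operatorname{tr}\Lambda_Z$ on $[\h,\h]=\g$ (equivalently, the Obata--Ricci form), then use the bi-invariant strong HKT metric to build the twisted Calabi--Yau datum. However, there are two genuine gaps.

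\textbf{The trace computation is asserted, not done.} You write that ``the remaining contribution to $\operatorname{tr}_{\R}\Lambda_Z$ for $Z\in[\g,\g]$ is a pairing with a fixed element of $\bi$'', and that ``the bookkeeping underlying Lemma~\ref{Lem:technical} and Theorem~\ref{THM:A} should supply the framework''. This is the heart of the proof, and those references are misplaced: Lemma~\ref{Lem:technical} and Theorem~\ref{THM:A} concern the existence of $\nabla$-parallel subbundles of $TM$ arising from trivial $\f_i$-summands, which is orthogonal to the trace question. The paper instead proves a closed formula for the Obata connection $1$-form, $\eta(X)=-\tfrac12\operatorname{tr}(\mathrm{ad}_X)-\tfrac12\operatorname{tr}(L\,\mathrm{ad}_{LX})$ for any $L\in\{I,J,K\}$ (Lemma~\ref{lem:Obata1form}), and then carries out a block-by-block computation across the Joyce decomposition (Proposition~\ref{Prop:Lee}) to obtain $\theta=2\sum_j\lambda_j^{-2}(1+\dim_\H\f_j)\,(e_1^j)^\sharp$, supported on the $(\ell\,\mathfrak{u}(1)\oplus\bi)$-directions. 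Corollary~\ref{Cor:Leeclosed} then reads off that $d\theta=0$ iff every $e_1^j$ lies in $\ell\,\mathfrak{u}(1)$, i.e.\ iff $\bi=0$. Your sketch points in the right direction but omits precisely this computation.

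\textbf{The ``any hypercomplex structure'' clause needs the compatibility observation.} For the twisted Calabi--Yau part you take the bi-invariant metric and assert it is hyperhermitian for the Joyce hypercomplex structure. In general this requires the compatibility condition~\eqref{eqn:compatibility} on the basis $(e_1^{\ell+1},\dots,e_1^m)$ of $\bi$, which constrains the hypercomplex structures for which the argument applies. The paper notes (end of Section~\ref{section:HKT} and Remark~\ref{rmk:uniqueness}) that when $\bi=0$ this condition is vacuous, so \emph{every} Joyce hypercomplex structure is compatible with the extended Killing--Cartan metric; this is what justifies the word ``any'' in the statement and should be made explicit. Once compatibility is in place, your construction of $\Psi$ and the identification of $\theta$ with the Lee form go through; the paper shortcuts this by invoking the HKT identity $\partial\Omega=0$ directly (whence $d\Psi=\theta\wedge\Psi$ with $\theta$ the Lee form) and the pluriclosedness of strong HKT.
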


The paper is organised as follows. In Section \ref{Sec:pre} we gather some  preliminary material. We review the \emph{Joyce decomposition} of semisimple Lie algebras and the construction of left-invariant hypercomplex structures. In Section \ref{sec:PrelimLemmas} we prove new technical results on the Lie algebraic structure associated to the Joyce decomposition. This allows us to establish the key Lemma \ref{Lem:technical}. We use this lemma in Section \ref{Sec:holred} to prove Theorem \ref{THM:A}. We also address the remaining case of $\mathrm{SU}(2n+1)$ and provide explicit calculation of the holonomy algebra for the case of $\mathbb{T}^2 \times \mathrm{Sp}(2)$. Finally, Section \ref{Sec:restrhol} is devoted to the proof of Theorem \ref{THM:B}.

\noindent 
\newline 
{\bf Acknowledgments:}
The authors would like to thank A. Fino, G. Grantcharov, S. Salamon, A. Soldatenkov, and M. Verbitsky for their interest in this work and several helpful comments. BB also thanks IMPA and FAMAF for their warm hospitality during the preparation of this work. \\
BB was supported by the Project PRIN 2022 “Real and complex manifolds: Geometry and Holomorphic Dynamics” and by the INdAM - GNSAGA Project CUP E53C24001950001.
UF was supported by the S\~ao Paulo Research Foundation (FAPESP) under grant [2023/12372-1], associated with the BRIDGES Collaboration [2021/04065-6]. BB, GG and LV were partially supported by GNSAGA (INdAM). LV was also supported by the PRIN project \lq\lq Differential-geometric aspects of manifolds via Global Analysis'' 20225J97H5.

\section{Preliminaries}\label{Sec:pre}

In this section, we review the construction and basic properties of Joyce hypercomplex manifolds. We recall the Joyce decomposition of compact semisimple Lie algebras, the resulting left-invariant hypercomplex structures on products $\mathbb{T}^{2m-r}\times G$, and the existence of compatible HKT metrics. We refer the reader to \cite{GP, Joy, OP} for further details. 

\subsection{Joyce decomposition} \label{sec:joyce}
The Joyce construction applies to any compact Lie group $G$. Since, up to a finite cover, any compact Lie group splits as a product of a torus and a compact semisimple Lie group, for our purposes here we may assume without loss of generality that $G$ is semisimple. We set $r=\operatorname{rank}{G}$. \par
Let $H$ be a maximal torus in $G$, and let $\mathfrak{h}$ and $\mathfrak{g}$ denote their respective Lie algebras\footnote{In what follows, we denote the Lie algebra of a Lie group $G$ by the same letter in fraktur font, that is, $\operatorname{Lie}(G) = \mathfrak{g}$.}.
Choose a system of ordered roots $\Delta$ with respect to $\mathfrak{h}_\C$ and fix a maximal positive root $ 
\alpha_1$. Let $\di_1$ be the $\mathfrak{sp}(1)$-subalgebra of $
\mathfrak{g}$ whose complexification is isomorphic to the $\mathfrak{sl}(2,\C)$-subalgebra $[\g_{\alpha_{1}},\g_{-\alpha_{1}}]\oplus\mathfrak{g}_{\alpha_1}\oplus\mathfrak{g}_{-\alpha_1}$, where $
\mathfrak{g}_{\alpha_1}$ and $\mathfrak{g}_{-\alpha_1}$ are the root spaces for $\alpha_1$ and $-\alpha_1$, respectively. Let $\mathfrak{b}_1$ denote the centraliser of $\di_1$. Then there is a real subspace $\mathfrak{f}_1$ of  dimension $4d_1$, for some $d_1\in \mathbb{N}_0$, such that $\mathfrak{g}=\mathfrak{b}_1\oplus\di_1\oplus \mathfrak{f}_1$. More precisely,
\[\f_1=\g \cap \bigoplus_{\substack{\alpha_1 \neq \alpha > 0 \\ \langle \alpha, \alpha_1 \rangle \neq 0}} \mathfrak{g}_{\alpha}\oplus\mathfrak{g}_{-\alpha}.
\]
The subalgebra $\mathfrak{b}_1$ is the direct sum of an abelian Lie algebra and a semisimple Lie algebra $\g'$. If $\g'$ is non trivial, then Joyce re-applies the above decomposition to $\g'$. \par
By a recursive process, one obtains a decomposition of $\g$ of the form \cite[Lemma 4.1]{Joy}:
\begin{equation}\label{eqn:Joycedec}
\mathfrak{g}=\mathfrak{b}\oplus \bigoplus_{i=1}^m\mathfrak{d}_i\oplus \bigoplus_{i=1}^m\mathfrak{f}_i\,,
\end{equation}
where
\begin{enumerate} 
\item $ \mathfrak{b} $ is an abelian subalgebra of dimension $ r-m $, 
\item $ \mathfrak{d}_i\subseteq  \mathfrak{g} $ is a subalgebra isomorphic to $ \mathfrak{su}(2) $ for each $i=1,\dots,m$, 
\item $ \mathfrak{f}_i\subseteq \mathfrak{g} $ are (possibly trivial) subspaces for each $i=1,\dots,m$. 
\end{enumerate} 
Furthermore, the following commutation relations hold:
\begin{enumerate}[label=(J\arabic*),ref=J\arabic*]
\item \label{eqn:joyce1} $[\mathfrak{d}_i,\mathfrak{b}]=0 $, for any $ i=1,\dots,m $;
\item \label{eqn:joyce2} $ [\mathfrak{d}_i,\mathfrak{d}_j]=0 $,  for $ i\neq j $;
\item \label{eqn:joyce3} $ [\mathfrak{d}_i,\mathfrak{f}_j]=0 $,  for $ i<j$;
\item \label{eqn:joyce4} $ [\mathfrak{d}_i,\mathfrak{f}_i]\subseteq \mathfrak{f}_i $, for any $ i=1,\dots,m $; moreover, the action of $\mathfrak{d}_i$ on $\mathfrak{f}_i$ is isomorphic to the direct sum of a finite number of copies of the standard $ \mathfrak{su}(2) $-action on $ \C^2 $.
\end{enumerate}
We will refer to a decomposition as in \eqref{eqn:Joycedec} as a \emph{Joyce decomposition}. Note that Joyce's construction involves the choice of a Cartan subalgebra, a system of positive roots, and a maximal root. While different choices lead to different decompositions, they are nonetheless all isomorphic via an inner automorphism of $\g$. Thus, we may unambiguously talk about \emph{the} Joyce decomposition of a given compact Lie group.\par
For further purposes, we emphasise the link between the Joyce decomposition and the construction of compact quaternion-K\"ahler symmetric spaces, so-called Wolf spaces, as described in \cite{Wo}. This relation has also been observed in \cite{OP, SSTvP}. \par
It is shown in \cite{Wo} that for each compact simple Lie group $G$ there is a unique compact quaternion-K\"ahler symmetric space $M^{4n}=G/H$ obtained as a quotient of $G$ by a subgroup $H=K \cdot  \mathrm{Sp}(1)$:
\begin{equation*}
   M^{4n}:= \frac{G}{K\cdot \mathrm{Sp}(1)},
\end{equation*}
where $\mathrm{Sp}(1)$ is the compact real form of the $\mathrm{SL}(2,\C)$-triple associated with the maximal root of $G$, and $K$ is its centraliser in $G$. This yields the following decomposition of the Lie algebra $\g$:
\begin{equation*}
    \mathfrak{g}=\mathfrak{k}\oplus \mathfrak{su}(2) \oplus \mathfrak{m},
\end{equation*}
where $\mathfrak{m}$ corresponds to the isotropy representation. Since $M$ carries a quaternion-Kähler structure, as an $\mathrm{Sp}(1) \cong \mathrm{SU}(2)$-module we have $\mathfrak{m}\cong \C^2 \oplus \cdots \oplus \C^2$ ($n$ copies) cf. \cite{Sal}. \par
The above isotropy decomposition coincides precisely with the Joyce decomposition at the first level:
set $\mathfrak{d}_1=\mathfrak{su}(2)$ and $\mathfrak{m}=\f_1$. Since $K$ is a compact Lie group (not necessarily simple), one can apply the Joyce decomposition, up to a shift of the index $i$, to $\mathfrak{k}$. Thus, we can summarise the above observation into:
\begin{lem} \label{lem:misha}
The decomposition
\begin{equation}\label{eqn:dec}
    \mathfrak{g} = \Big(\mathfrak{b}\oplus \bigoplus_{i=2}^m\mathfrak{d}_i\oplus \bigoplus_{i=2}^m\mathfrak{f}_i \Big) \oplus \mathfrak{su}(2) \oplus \mathfrak{m}
\end{equation}
coincides with the Joyce decomposition $\eqref{eqn:Joycedec}$ of $\mathfrak{g}$.
\end{lem}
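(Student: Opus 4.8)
The statement claims that the decomposition \eqref{eqn:dec}, obtained by splitting off the $\mathfrak{su}(2)$-triple $\mathfrak{su}(2)$ associated to the maximal root together with the Wolf-space isotropy module $\mathfrak{m}$, and then feeding $\mathfrak{k}$ back into Joyce's recursion, reproduces precisely \emph{the} Joyce decomposition \eqref{eqn:Joycedec}. Since we have already remarked that the Joyce decomposition is unique up to inner automorphism (so that "the" Joyce decomposition is well defined), the essential point is to verify that the first step of Joyce's recursive construction \emph{is} the Wolf-space splitting.

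The plan is as follows. First I would unwind the definitions: in Joyce's construction one fixes a Cartan subalgebra $\mathfrak h$, a positive system $\Delta^+$, and a maximal positive root $\alpha_1$; then $\mathfrak d_1$ is the compact real form of the $\mathfrak{sl}(2,\C)$-triple $[\g_{\alpha_1},\g_{-\alpha_1}]\oplus\g_{\alpha_1}\oplus\g_{-\alpha_1}$, and $\mathfrak b_1$ is its centraliser, with $\f_1$ the orthogonal complement described by the displayed formula. On the Wolf-space side, $\mathrm{Sp}(1)$ is by definition the compact real form of the $\mathrm{SL}(2,\C)$-triple associated with the maximal root, so $\mathfrak{su}(2)=\mathfrak d_1$ on the nose (for compatible choices of Cartan and positive system); and $\mathfrak k=\operatorname{Lie}(K)$ is the centraliser of that $\mathrm{Sp}(1)$ in $G$, hence $\mathfrak k=\mathfrak b_1$. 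It then remains to identify the isotropy module $\mathfrak m$ with $\f_1$: both are the orthogonal complement (with respect to the Killing form, say) of $\mathfrak d_1\oplus\mathfrak b_1=\mathfrak{su}(2)\oplus\mathfrak k$ in $\g$, so they coincide, and I would note that the root-space description of $\f_1$ in the excerpt is exactly the span of those root spaces $\g_{\pm\alpha}$ on which $\mathfrak d_1$ acts nontrivially, i.e. $\langle\alpha,\alpha_1\rangle\neq 0$, matching the isotropy representation.

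Next I would invoke Lemma \ref{lem:misha}'s hypothesis that $K$ is a compact Lie group; writing $\mathfrak k=\mathfrak z(\mathfrak k)\oplus\g'$ with $\g'$ semisimple (the abelian part being absorbed into $\mathfrak b$), Joyce's recursion applied to $\g'$ produces $\mathfrak b',\mathfrak d_2,\dots,\mathfrak d_m,\f_2,\dots,\f_m$, and assembling these with the first-level data gives exactly the right-hand side of \eqref{eqn:dec}. Finally, since any two runs of Joyce's recursion on $\g$ (with possibly different auxiliary choices) are related by an inner automorphism, and the run just described is one legitimate such run, it equals the Joyce decomposition as claimed.

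The only mild subtlety — and the step I would be most careful about — is the matching of the auxiliary choices: Joyce's first step requires choosing a maximal \emph{positive} root, whereas Wolf's construction a priori only singles out the highest root up to the Weyl group action; I would address this by choosing, on the Joyce side, the positive system so that $\alpha_1$ is the highest root, which is always possible, and then appeal to uniqueness up to inner automorphism to conclude that the resulting decomposition is independent of this choice. Beyond this bookkeeping, the proof is essentially a matter of matching two descriptions of the same orthogonal splitting, so I do not anticipate any real obstacle.
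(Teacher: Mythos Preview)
Your proposal is correct and follows essentially the same approach as the paper: identify $\mathfrak{d}_1$ with the $\mathfrak{su}(2)$ coming from the maximal root, $\mathfrak{b}_1$ with $\mathfrak{k}$ as its centraliser, and $\mathfrak{f}_1$ with $\mathfrak{m}$ as the orthogonal complement, then recurse on $\mathfrak{k}$. The paper in fact leaves the lemma as a remark (``follows from the construction in \cite{Wo} and is well known to the experts''), so your write-up is, if anything, more detailed than what appears there.
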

The latter follows from the construction in \cite{Wo} and is well known to the experts.
 \par
\subsection{Joyce hypercomplex structures} \label{sec:hcx}
As above, we denote by $G$ a compact semisimple Lie group of rank $r$, with associated Joyce decomposition \eqref{eqn:Joycedec}.
Let $\mathbb{T}^{2m-r}\cong \mathrm{U}(1)^{2m-r} $ be a $ (2m-r) $-dimensional torus. To simplify the notation, we set $\ell:=2m-r$. Thus, we have an isomorphism:
$$
\ell \mathfrak{u}(1)\oplus \mathfrak{b}\cong \R^m.
$$ 
Note that the choice of such an isomorphism (that is, the choice of a basis of $\ell \mathfrak{u}(1)\oplus \mathfrak{b}$) depends on $m^2$ parameters. Let $ \mathcal{B}=(e_1^1,e_1^2,\dots,e_1^m) $ denote a fixed basis of $ \ell \mathfrak{u}(1)\oplus \mathfrak{b}\cong \R^m $. The Joyce hypercomplex structure on $\mathbb{T}^{\ell }\times G$ constructed from $\mathcal{B}$ is defined on each ``layer'' of the decomposition \eqref{eqn:Joycedec} as follows: 
\begin{itemize}
    \item For every $ i=1,\dots,m $, fix a basis $ (e_2^i,e_3^i,e_4^i)$ of $\mathfrak{d}_i \cong  \mathfrak{su}(2) $ so that $ e_2^i,e_3^i,e_4^i $ satisfy
\begin{align}
\label{eq_basis_su2}
[e_2^i,e_3^i]=2e^i_4\,,&& [e_4^i,e^i_2]=2e^i_3\,,&& [e^i_3,e^i_4]=2e^i_2\,.
\end{align}
In this way, $ (e_1^i,e_2^i,e_3^i,e_4^i)$ is a basis of $ \mathfrak{h}_i:=\R \oplus \mathfrak{d}_i \cong \R \oplus \mathfrak{su}(2)$, which can be regarded as a copy of the space of quaternions. In particular, each $\h_i$ has a natural hypercomplex structure given by:
\begin{align*}
Ie^i_1=e^i_2\,, \ \  Ie^i_3=e^i_4\,, \ \   Je^i_1=e^i_3\,, \ \  Je^i_2=-e^i_4\,,
 \ \  Ke^i_1=e^i_4\,, \ \  Ke^i_2=e^i_3\,.
\end{align*}
\item The action of $I,J,K$ is extended to each $ \mathfrak{f}_i $ by taking advantage of property \eqref{eqn:joyce4}:
\[
If=[e^i_2,f]\,, \quad Jf=[e^i_3,f]\,, \quad Kf=[e^i_4,f]\,,
\]
for each $ f\in \mathfrak{f}_i $.
\end{itemize}
The complex structures $\{I,J,K\}$, defined at the identity, are extended to all of $\mathbb{T}^{\ell}\times G$ by left translation. By appealing to the construction in \cite{Sam},
Joyce shows that these almost complex structures are in fact integrable and define a homogeneous hypercomplex structure on $\mathbb{T}^{\ell}\times G$ \cite{Joy}. We shall call $(\mathbb{T}^{\ell}\times G,I,J,K)$ a \emph{Joyce hypercomplex manifold}. We emphasise that the hypercomplex structure depends on the choice of the basis $\mathcal{B}$ and thus, there is a parameter space of dimension $m^2$ of left-invariant hypercomplex structures on the same underlying manifold $\mathbb{T}^{\ell}\times G$ and many of them are inequivalent.

\begin{rmk}
It is worth pointing out that Joyce also extended the construction of invariant hypercomplex structures to certain homogeneous spaces of the form $\mathbb{T}^s \times (G/H)$ for suitable $s$ and $H$ \cite{Joy}. Although some of our results can be applied to such homogeneous hypercomplex spaces, we shall restrict to the case of trivial isotropy i.e. group manifolds.
\end{rmk}

\subsection{HKT metrics on Joyce hypercomplex manifolds} \label{section:HKT}
Let $B$ denote the negative of the Killing--Cartan form of $\g$. It was shown in \cite{GP} that the Joyce decomposition \eqref{eqn:Joycedec} is $B$-orthogonal. 
Let $(e_2^j,e_3^j,e_4^j)$ denote a standard basis of $\di_j$ satisfying \eqref{eq_basis_su2} such that
\[
B(e_2^j,e_2^j)= B(e_3^j,e_3^j)= B(e_4^j,e_4^j)=\lambda_j^2, \ \ j=1,\dots,m.
\]
In other words, the constants $\lambda_j$ correspond to the (uniform) length of the basis vectors $(e_2^j,e_3^j,e_4^j)$ with respect to  $B$.
Consider now a basis $(e_1^1,\dots,e_1^\ell,e_1^{\ell+1},\dots,e_1^{m})$ of $\ell\mathfrak{u}(1) \oplus \mathfrak{b} \cong \R^m$, where $(e_1^1,\dots,e_1^\ell)$ is a basis of $\ell \mathfrak{u}(1)$ and $(e_1^{\ell +1},\dots,e_1^{m})$ is a $B$-orthogonal basis of $\bi$ such that
\begin{equation}\label{eqn:compatibility}
B(e_1^{\ell+j},e_1^{\ell+j})=\lambda_{\ell+j}^2, \ \ j=1,\dots,m-\ell.
\end{equation}
We then extend $B$ to a positive definite bilinear form $g$ of $\ell \mathfrak{u}(1) \oplus \mathfrak{g}$ by setting:
\[
g(e_1^{j},e_1^{j})=\lambda_{j}^2, \ \ j=1,\dots,\ell.
\]
The latter choice ensures that $(e_1^j,e_2^j,e_3^j,e_4^j)$ have uniform length, and
By construction $g$ defines a bi-invariant metric on $\ell \mathfrak{u}(1) \oplus \mathfrak{g}$ which is hyperhermitian with respect to the Joyce hypercomplex structure $(I,J,K)$ obtained from the basis $(e_1^1,\dots,e_1^{m})$ of $\ell \mathfrak{u}(1) \oplus \mathfrak{b} \cong \R^m$. 
More precisely, the metric $g$ satisfies:
\begin{enumerate}
    \item $g_{|\g}=B$, 
    \item the decomposition $\ell \mathfrak{u}(1) \oplus \mathfrak{b} \oplus \bigoplus_{i=1}^m \mathfrak{d}_i \oplus \bigoplus_{i=1}^m \mathfrak{f}_i$ is $g$-orthogonal,
    \item $g$ is bi-invariant,
    \item the hyperhermitian structure $(g,I,J,K)$ is strong HKT \footnote{A hyperhermitian structure $(I,J,K,g)$ is said to be hyperK\"ahler with torsion (HKT) \cite{HP} if $d^c_I\omega_I=d^c_J\omega_J=d^c_K\omega_K$, where $\omega_L=g(L \cdot,\cdot)$ and $d^c_L=L^{-1}dL$. An HKT structure is strong HKT if $dd^c_I\omega_I=0$.}.
\end{enumerate}
This has been observed in \cite{GP,OP}. \par

We emphasise that the requirement \eqref{eqn:compatibility} imposes some constraints on the possible choices of $(e_1^{\ell +1},\dots,e_1^m)$; indeed not all the Joyce hypercomplex structures are compatible with an extension of the Killing--Cartan form. However, such a choice can always be made, i.e. for each compact simple Lie group there exists at least one Joyce hypercomplex structure compatible with the extension of the Killing--Cartan form as above. We also note that in general the choices of the bases $(e_1^1,\dots,e_1^\ell)$ and $(e_1^{\ell +1}, \dots, e_1^{m})$ are not unique.
When $\mathfrak{b} = 0$, the condition \eqref{eqn:compatibility} is vacuous, and as such any Joyce hypercomplex structure is compatible with an extension of the Killing--Cartan form $B$ as above.

\begin{rmk} \label{rmk:uniqueness}
Note that when \(\mathfrak{b} = 0\), there is exactly one left-invariant hypercomplex structure on the universal cover \(\mathbb{R}^m \times \widetilde{G}\), up to isomorphisms.
Hence, given any two Joyce hypercomplex structures \((I,J,K)\) and \((\hat{I}, \hat{J}, \hat{K})\) with Obata connections \(\nabla\) and \(\hat{\nabla}\), respectively, one has
\[
\mathfrak{hol}(\nabla) \cong \mathfrak{hol}(\hat{\nabla}).
\]
Here we are using the fact that the restricted holonomy group coincides with the holonomy group of the induced Obata connection on the universal cover \(\mathbb{R}^m \times \widetilde{G}\).
\end{rmk}

\section{Technical Lemmas} \label{sec:PrelimLemmas}
In this section, we prove several technical lemmas, based on explicit computations, which  will be crucial in the subsequent sections. We follow the same notation as in the previous section.

\begin{lem}\label{Lem:structure}
Let $G$ be a compact semisimple Lie group and let $\g$ be its Lie algebra. 
With respect to the decomposition $\eqref{eqn:Joycedec}$ we have
\begin{enumerate}
\item $[\mathfrak{b},\mathfrak{f}_j]\subseteq \mathfrak{f}_j$ for all $j=1,\dots,m$;
\item $[\mathfrak{d}_i,\mathfrak{f}_j]\subseteq \mathfrak{f}_j$ for $i>j$;
\item $[\mathfrak{f}_i,\mathfrak{f}_j]\subseteq \mathfrak{f}_i$ for $i< j$;
\item $[\mathfrak{f}_i,\mathfrak{f}_i]\subseteq \mathfrak{b}\oplus  \bigoplus_{k\geq i}  \mathfrak{d}_k\oplus  \bigoplus_{k\geq i}   \mathfrak{f}_k$.
\end{enumerate}
\end{lem}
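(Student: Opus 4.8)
The plan is to leverage the iterative structure of the Joyce decomposition together with the known relations \eqref{eqn:joyce1}--\eqref{eqn:joyce4}. Recall that at the first level one writes $\g = \bi_1 \oplus \di_1 \oplus \f_1$ with $\bi_1 = C_{\g}(\di_1)$ the centraliser of $\di_1$, and that $\bi_1$ splits as an abelian part plus a semisimple part $\g'$ to which the construction is recursively applied; hence $\bi \oplus \bigoplus_{k\ge 2}\di_k \oplus \bigoplus_{k\ge 2}\f_k$ is precisely the Joyce decomposition of (an abelian extension of) $\g'$, and in particular $\bi \oplus \bigoplus_{k\geq 2} \di_k \oplus \bigoplus_{k\geq 2} \f_k \subseteq \bi_1$. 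The key structural fact I would extract and use repeatedly is: $\f_j$ is a sum of root spaces $\g_\alpha \oplus \g_{-\alpha}$ over roots $\alpha$ with $\alpha\ne\alpha_j$, $\langle\alpha,\alpha_j\rangle\ne 0$ and $\alpha$ a root of $\g_j':=$ the semisimple part at level $j$; while $\bi\oplus\bigoplus_{k>j}\di_k\oplus\bigoplus_{k>j}\f_k$ lives inside the root spaces orthogonal to $\alpha_j$.

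For item (1), since each $\f_j$ ($j\ge 2$) lies in $\bi_1 = C_\g(\di_1)$, and more generally $\f_j \subseteq C_\g(\di_i)$ for $i<j$, the bracket $[\bi,\f_j]$ takes place entirely within the level-$\ge 2$ Joyce data; one can then argue by induction on the recursion depth, the base case being that $\bi$ centralises $\di_1,\dots,\di_m$ by \eqref{eqn:joyce1}, so the only way $[\bi,\f_j]$ can have a component is in $\f_j$ itself (a component in some $\di_k$ or in $\bi$ would, after bracketing back with the appropriate $\di_k$, contradict \eqref{eqn:joyce1} or the fact that $\f_j$ carries an irreducible-type $\di_j$-action; a component in $\f_k$ for $k\ne j$ is excluded because $\bi$ commutes with $\di_k$ for all $k$ and $\f_k$, $\f_j$ are distinct isotypic $\di_k$-pieces). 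For item (2), $[\di_i,\f_j]\subseteq\f_j$ for $i>j$ follows similarly: $\di_i$ ($i>j$) is part of the level-$>j$ data sitting inside the centraliser structure of level $j$, so $\di_i\subseteq C_\g(\di_j)$, and bracketing $\di_i$ into $\f_j$ cannot produce a $\di_j$- or $\bi$-component (same contradiction with \eqref{eqn:joyce1}--\eqref{eqn:joyce2}), cannot produce an $\f_k$-component for $k<j$ (by \eqref{eqn:joyce3} applied at the appropriate level, since $\f_j$ is killed by lower $\di$'s), nor an $\f_k$-component for $k>j$ with $k\ne i$ (isotypic-component argument under $\di_k$). Item (3), $[\f_i,\f_j]\subseteq\f_i$ for $i<j$, is the analogue statement and should be reduced to item (2) and \eqref{eqn:joyce3}: decompose $\f_i$ and $\f_j$ into $\di_i$-isotypic pieces; since $\di_i$ acts trivially on $\f_j$ (by \eqref{eqn:joyce3}, as $j>i$) and acts as copies of $\C^2$ on $\f_i$, the bracket $[\f_i,\f_j]$ is a $\di_i$-module map from (copies of $\C^2$)$\otimes$(trivial) into $\g$, forcing its image into the $\C^2$-isotypic part of $\g$ under $\di_i$, which among the Joyce summands is exactly $\f_i$ — a component in $\di_i\oplus\mathbb{R}e_1^i$ is a trivial-plus-adjoint summand and is ruled out by a Jacobi-identity/weight count, and components in other $\f_k$ or $\di_k$ are ruled out by combining \eqref{eqn:joyce2}, \eqref{eqn:joyce3}, and the already-proven (1),(2).

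Item (4), $[\f_i,\f_i]\subseteq\bi\oplus\bigoplus_{k\ge i}\di_k\oplus\bigoplus_{k\ge i}\f_k$, is where I expect the main obstacle: one must show there is no component in $\di_k$ or $\f_k$ for $k<i$. The clean way is again the recursive/root-space description: at the moment $\f_i$ is produced (say at recursion level $i$, inside $\g_i'$), we have $\g = (\text{lower-level }\di\text{'s and }\f\text{'s}) \oplus \g_i'$ as a subalgebra-plus-complement, and $\f_i \subseteq \g_i'$, so $[\f_i,\f_i]\subseteq\g_i'$, which by the inductive form of the decomposition is exactly $\bi\oplus\bigoplus_{k\ge i}\di_k\oplus\bigoplus_{k\ge i}\f_k$. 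The technical point to nail down is that $\g_i'$ (the semisimple part of the centraliser chain after $i-1$ steps, together with the leftover abelian directions) is a subalgebra of $\g$ — which is immediate since it is a centraliser intersected with a semisimple ideal — and that the lower $\f_k$'s and $\di_k$'s ($k<i$) are disjoint from it; both follow from the orthogonal root-space decomposition and the construction. I would present (4) first via this subalgebra argument and then remark that (1)–(3) also admit this uniform treatment, choosing whichever exposition is shortest. Throughout, the invariance of $B$ (the decomposition is $B$-orthogonal, per Section~\ref{section:HKT}) gives a convenient alternative bookkeeping device: $B([\f_i,\f_j],\di_k)=-B(\f_j,[\f_i,\di_k])$ etc., turning each containment claim into the vanishing of a bracket already controlled by \eqref{eqn:joyce1}--\eqref{eqn:joyce4}.
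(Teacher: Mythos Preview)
Your approach is essentially correct but genuinely different from the paper's. The paper does not use the recursive subalgebra structure at all; instead it fixes the bi-invariant metric $h$ and repeatedly exploits the identity $X=-I^2X=-[e_2^k,IX]$ for $X\in\f_k$ (coming from the very definition of the hypercomplex structure) together with Jacobi and \eqref{eqn:joyce1}--\eqref{eqn:joyce4} to show that $h([\cdot,\cdot],X)$ vanishes against each unwanted summand. In other words, the hypercomplex structure is used as a computational device in the paper's proof, whereas you never touch $(I,J,K)$.

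Your route is cleaner once stated sharply: the chain $\bi_0=\g\supseteq \bi_1\supseteq\cdots\supseteq\bi_m=\bi$ of centralisers satisfies $\bi_{i-1}=\bi\oplus\bigoplus_{k\ge i}\di_k\oplus\bigoplus_{k\ge i}\f_k$ as a subalgebra, and the $B$-orthogonal decomposition $\bi_{i-1}=\bi_i\oplus\di_i\oplus\f_i$ is reductive. Then (4) is immediate from $\f_i\subseteq\bi_{i-1}$; (1) and (2) follow from $\bi,\di_k\subseteq\bi_j$ (for $k>j$) plus reductivity; and (3) follows because $\f_j\subseteq\bi_i$ is $\di_i$-trivial while $\f_i$ is the $\C^2$-isotypic piece of $\bi_{i-1}$.

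One caution: in your sketch for (2) and (3) the elimination/isotypic arguments, taken on their own, do not exclude a component in $\f_k$ for $k<i$ (nothing in \eqref{eqn:joyce1}--\eqref{eqn:joyce4} tells you how $\di_i$ acts on $\f_k$ with $k<i$, so ``isotypic under $\di_k$'' is not enough there). You must first invoke the subalgebra containment $[\cdot,\cdot]\subseteq\bi_{i-1}$ to kill the lower summands, and only then run the isotypic argument inside $\bi_{i-1}$. You say as much by the end, but the middle paragraphs would benefit from being rewritten in that order.
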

\begin{proof}
Let $(I,J,K)$ be a Joyce hypercomplex structure on the Lie algebra $\ell \mathfrak{u}(1)\oplus \mathfrak{g}$ (see Section \ref{sec:hcx}). \par
Set $h:=g_E \oplus B$, where $g_E$ is the Euclidean metric on $\ell \mathfrak{u}(1)$ and $B$ is the negative of the Killing--Cartan form of $\g$. Then $h$ is a bi-invariant metric on $\ell \mathfrak{u}(1)\oplus \mathfrak{g}$ such that the decomposition
\begin{equation} \label{eqn:dec2}
\ell \mathfrak{u}(1)\oplus \mathfrak{b} \oplus \bigoplus_{i=1}^m \mathfrak{d}_i \oplus \bigoplus_{i=1}^m \mathfrak{f}_i,
\end{equation}
is $h$-orthogonal \cite{GP}. Observe that we are not assuming $h$ to be compatible with the hypercomplex structure $(I, J, K)$. \par
The proof of the lemma is a direct computation, using the bi-invariant metric $h$ on $\ell \mathfrak{u}(1) \oplus \mathfrak{g}$, together with repeated application of the definition of the hypercomplex structure $(I, J, K)$ (see Section~\ref{sec:hcx}) and the properties~\eqref{eqn:joyce1}--\eqref{eqn:joyce4}. \par

We now prove statement \textbf{1}. Fix $b \in \mathfrak{b}$ and $f_j \in \mathfrak{f}_j$. Then
\[
h([b, f_j], X) = -h(f_j, [b, X]) = 0
\]
for all $X \in \ell \mathfrak{u}(1) \oplus \mathfrak{b} \oplus \bigoplus_{k=1}^m \mathfrak{d}_k$ using \eqref{eqn:joyce1}. On the other hand, if $X \in \mathfrak{f}_k$ with $k < j$, we compute:
\[
\begin{split}
h([b, f_j], X) &= -h([b, f_j], I^2 X) \\
&= -h([b, f_j], [e_2^k, I X]) \\
&= -h([[b, f_j], e_2^k], I X) \\
&= -h([[b, e_2^k], f_j], I X) - h([b, [f_j, e_2^k]], I X) = 0,
\end{split}
\]
where we used~\eqref{eqn:joyce1} and~\eqref{eqn:joyce3}. Similarly, if $X \in \mathfrak{f}_k$ with $k > j$, we have:
\[
\begin{split}
h([b, f_j], X) &= -h([b, I^2 f_j], X) \\
&= -h([b, [e_2^j, I f_j]], X) \\
&= -h([[b, e_2^j], I f_j], X) - h([e_2^j, [b, I f_j]], X) \\
&= -h([e_2^j, [b, I f_j]], X) \\
&= \ \ \,  h([b, I f_j], [e_2^j, X]) = 0,
\end{split}
\]
again by~\eqref{eqn:joyce1} and~\eqref{eqn:joyce3}. This concludes the proof of statement \textbf{1}.

We now prove statement \textbf{3}. Fix $f_i \in \mathfrak{f}_i$ and $f_j \in \mathfrak{f}_j$. Let $X \in \mathfrak{g}$. Then:
\begin{equation} \label{eq:strJoyce}
\begin{split}
h([f_i, f_j], X) &= -h([I^2 f_i, f_j], X) \\
&= -h([[e_2^i, I f_i], f_j], X) \\
&= -h([[e_2^i, f_j], I f_i], X) - h([e_2^i, [I f_i, f_j]], X) \\
&= \ \ \, h([[I f_i, f_j], e_2^i], X) \\
&= \ \ \, h([I f_i, f_j], [e_2^i, X]),
\end{split}
\end{equation}
where we used~\eqref{eqn:joyce3} in the fourth line. It follows from~\eqref{eq:strJoyce} that $h([f_i, f_j], X) = 0$ whenever $X \in \ell \mathfrak{u}(1) \oplus \mathfrak{b} \oplus \bigoplus_{k \neq i} \mathfrak{d}_k \oplus \bigoplus_{l > i} \mathfrak{f}_l$. If instead $X \in \mathfrak{d}_i$, then
\[
h([f_i, f_j], X) = h(f_i, [f_j, X]) = 0,
\]
again by~\eqref{eqn:joyce3}. Now suppose $X \in \mathfrak{f}_l$ with $l < i$. Then:
\[
\begin{split}
h([f_i, f_j], X) &= -h([f_i, f_j], I^2 X) \\
&= -h([f_i, f_j], [e_2^l, I X]) \\
&= -h([[f_i, f_j], e_2^l], I X) \\
&= -h([[f_i, e_2^l], f_j], I X) - h([f_i, [f_j, e_2^l]], I X) = 0,
\end{split}
\]
where the final equality follows from~\eqref{eqn:joyce3} and the fact that $l < i < j$. Hence, $[f_i, f_j]$ can only have non-zero components in $\mathfrak{f}_i$, completing the proof of statement \textbf{3}.

We now prove statement \textbf{2} using statement \textbf{3}. Let $d_i \in \mathfrak{d}_i$ and $f_j \in \mathfrak{f}_j$. Then, for all $X \in \ell \mathfrak{u}(1) \oplus \mathfrak{b} \oplus \bigoplus_{k=1}^m \mathfrak{d}_k$, we have:
\[
h([d_i, f_j], X) = -h(f_j, [d_i, X]) = 0,
\]
where we used~\eqref{eqn:joyce1},~\eqref{eqn:joyce2}, the inclusion $[\mathfrak{d}_i, \mathfrak{d}_i] \subset \mathfrak{d}_i$, and the $h$-orthogonality of the decomposition~\eqref{eqn:dec2}. On the other hand, by statement \textbf{3}, we also get:
\[
h([d_i, f_j], X) = h(d_i, [f_j, X]) = 0
\]
for all $X \in \mathfrak{f}_l$ with $l \neq j$, where we used again the $h$-orthogonality of the decomposition~\eqref{eqn:dec2}. This proves statement \textbf{2}.

Finally, to prove statement \textbf{4}, let $f_i, f_i' \in \mathfrak{f}_i$. Then:
\[
h([f_i, f_i'], X) = h(f_i, [f_i', X]) = 0
\]
if $X \in \ell \mathfrak{u}(1)$ or $X \in \mathfrak{d}_k$ with $k < i$. On the other hand, if $X \in \mathfrak{f}_l$ with $l < i$, then by statement \textbf{3}, $[f_i', X] \in \mathfrak{f}_l$, and hence:
\[
h([f_i, f_i'], X) = h(f_i, [f_i', X]) = 0,
\]
where the last equality follows by the $h$-orthogonality of the decomposition~\eqref{eqn:dec2}.
\end{proof}

\begin{lem}\label{Lem:e_1hyp}
Let $(\mathbb{T}^{\ell }\times G, I,J,K)$ be a Joyce hypercomplex manifold. Then every vector $b\in \ell \mathfrak{u}(1)\oplus\mathfrak{b}$ is hyper-holomorphic.
\end{lem}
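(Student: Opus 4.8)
The statement says that every $b \in \ell\mathfrak{u}(1)\oplus\mathfrak{b}$ is hyper-holomorphic, i.e. the vector field generated by $b$ (via left translation) is holomorphic with respect to each of $I$, $J$, $K$ — equivalently $\mathcal{L}_b I = \mathcal{L}_b J = \mathcal{L}_b K = 0$. Since everything is left-invariant, this is purely a Lie-algebraic condition: for each $b \in \ell\mathfrak{u}(1)\oplus\mathfrak{b}$ one needs $[b, IX] = I[b,X]$ for all $X \in \ell\mathfrak{u}(1)\oplus\mathfrak{g}$, and likewise for $J$ and $K$. So the plan is to verify this identity layer by layer, using the explicit description of $I,J,K$ on each summand $\mathfrak{h}_i = \mathbb{R}e_1^i \oplus \mathfrak{d}_i$ and on each $\mathfrak{f}_i$ from Section \ref{sec:hcx}, together with the commutation relations \eqref{eqn:joyce1}--\eqref{eqn:joyce4} and Lemma \ref{Lem:structure}.

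First I would dispose of the easy layers. On $\ell\mathfrak{u}(1)\oplus\mathfrak{b}$ itself, $b$ commutes with everything in $\ell\mathfrak{u}(1)\oplus\mathfrak{b}$ (it is abelian) and with every $\mathfrak{d}_i$ by \eqref{eqn:joyce1}, so $[b, X] = 0$ for all $X \in \ell\mathfrak{u}(1)\oplus\mathfrak{b}\oplus\bigoplus_i\mathfrak{d}_i$; since $I,J,K$ preserve this subspace, the identity $[b,IX] = I[b,X] = 0$ is trivial there. Next, on each $\mathfrak{h}_i$: write $b = \sum_j c_j e_1^j$ in the fixed basis $\mathcal{B}$; the only interaction of $b$ with $\mathfrak{h}_i$ is through the $e_1^i$-component, but $I,J,K$ on $\mathfrak{h}_i$ are defined precisely so that $\mathfrak{h}_i$ is a copy of the quaternions with $e_1^i \mapsto 1$, and $e_1^i$ is central in $\ell\mathfrak{u}(1)\oplus\mathfrak{b}$, so again $[b,X]=0$ for $X\in\mathfrak{h}_i$ — handled.

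The substantive case is $\mathfrak{f}_i$. Here $I f = [e_2^i, f]$, $Jf = [e_3^i, f]$, $Kf = [e_4^i, f]$ for $f \in \mathfrak{f}_i$, and by Lemma \ref{Lem:structure}(1) we know $[b,f] \in \mathfrak{f}_i$, so both sides of $[b, If] = I[b,f]$ live in $\mathfrak{f}_i$ and the assertion becomes $[b,[e_2^i,f]] = [e_2^i,[b,f]]$. By the Jacobi identity this is equivalent to $[[b,e_2^i],f] = 0$, which holds because $[b, e_2^i] = 0$ by \eqref{eqn:joyce1} (as $e_2^i \in \mathfrak{d}_i$ and $b \in \ell\mathfrak{u}(1)\oplus\mathfrak{b}$, recalling $[\mathfrak{d}_i,\mathfrak{b}]=0$ and $b$ has no bracket with the torus part). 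The same argument with $e_3^i$ and $e_4^i$ gives the $J$- and $K$-statements. The only thing to be careful about is that the Lie derivative/holomorphicity condition is genuinely the vanishing of the Nijenhuis-type expression $\mathcal{L}_b I$ rather than something involving the Obata connection; I expect the main (minor) obstacle to be bookkeeping — checking that the identity $[b, IX] = I[b,X]$ holds on \emph{all} of $\ell\mathfrak{u}(1)\oplus\mathfrak{g}$ and not merely on the diagonal blocks, which is immediate once one notes that $I,J,K$ preserve each summand $\mathfrak{h}_i$ and $\mathfrak{f}_i$ of the decomposition and that $[b,-]$ also preserves this decomposition (by \eqref{eqn:joyce1} and Lemma \ref{Lem:structure}(1)), so it suffices to check block by block as above.
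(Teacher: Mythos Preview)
Your proposal is correct and follows essentially the same approach as the paper: reduce the holomorphicity condition to the Lie-algebraic identity $[b,IX]=I[b,X]$, dispose of the case $X\in \ell\mathfrak{u}(1)\oplus\mathfrak{b}\oplus\bigoplus_j\mathfrak{d}_j$ using \eqref{eqn:joyce1}, and for $X\in\mathfrak{f}_k$ use Lemma~\ref{Lem:structure}(1) together with the Jacobi identity to reduce to $[[b,e_2^k],X]=0$. The paper's write-up is slightly more compact (it does not separate out the $\mathfrak{h}_i$ discussion), but the argument is the same.
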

\begin{proof}
The claim is obvious for $b\in \ell \mathfrak{u}(1)$ since it lies in the center of the Lie algebra. Thus, we only need to prove it for $b\in \mathfrak{b}$. It suffices to show that $b$ is $I$-holomorphic as the proof is analogous for $J$ and $K$. We need to check that
\[
0=(\mathcal{L}_{b}I)X=[b,IX]-I[b,X]
\]
for all $X\in \mathfrak{g}$. The assertion is clear if $X\in \ell  \mathfrak{u}(1) \oplus \mathfrak{b} \oplus \bigoplus_{j=1}^m \mathfrak{d}_j$, so let us assume that $X\in \mathfrak{f}_k$. By assertion \textbf{1} of Lemma \ref{Lem:structure} we have $[b,X]\in \mathfrak{f}_k$. Therefore,
\[
[b,IX]-I[b,X]=[b,[e_2^k,X]]-[e_2^k,[b,X]]=[[e_2^k,b],X]=0\,,
\]
concluding the proof.
\end{proof}

\begin{rmk}
Lemma~\ref{Lem:e_1hyp} generalises  \cite[Proposition~4.2~(1)]{Sol}, where the result is proved for the Euler vector field $\mathcal{E}=-e_1^1$ on \(\mathrm{SU}(3)\) . 
\end{rmk}

Using Lemma \ref{Lem:structure}, we explicitly compute the covariant derivative of $e_1^j$ with respect to the Obata connection for  $j=1,\dots,m$, which will be useful later.

\begin{lem}\label{lem:nablae1}
Let $(\mathbb{T}^{\ell } \times G,I,J,K)$ be a Joyce hypercomplex manifold. Then
\[
\nabla_X e_1^i=
\begin{cases}
-X & \text{if }X \in \mathfrak{h}_i\oplus \mathfrak{f}_i\,,\\
0 & \text{otherwise}\,,
\end{cases}
\]
where $\h_i=\langle e^i_1 \rangle_\R \oplus \di_i$.
\end{lem}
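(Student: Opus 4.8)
The plan is to compute $\nabla_X e_1^i$ using the Obata connection's characterisation as the unique torsion-free connection preserving $I,J,K$. The cleanest route is via a known formula for the Obata connection on hypercomplex Lie groups in terms of the Lie bracket and the complex structures. Recall that on a hypercomplex Lie group the Obata connection can be written, for left-invariant vector fields, as
\[
\nabla_X Y = \tfrac12\big([X,Y] - I[IX,Y] - J[JX,Y] - K[KX,Y]\big) + (\text{terms symmetric in a suitable sense}),
\]
but more useful here is the observation that $\nabla_X(IY) = I\nabla_X Y$ together with torsion-freeness $\nabla_X Y - \nabla_Y X = [X,Y]$. The first step is therefore to set $Y = e_1^i$ and exploit Lemma~\ref{Lem:e_1hyp}: since $e_1^i \in \ell\mathfrak{u}(1)\oplus\mathfrak{b}$ (when $i$ ranges so that $e_1^i\in\mathfrak b$; the $\ell\mathfrak u(1)$ part is central) is hyper-holomorphic, the operators $\mathcal L_{e_1^i}$ commute with $I,J,K$. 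One should use this to pin down $\nabla e_1^i$ by writing $\nabla_{e_1^i}X$ in terms of $\nabla_X e_1^i$ and $[e_1^i,X]$, and separately analysing $\nabla_X e_1^i$ componentwise along the Joyce decomposition.

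The key structural input is that, within the copy of the quaternions $\mathfrak h_i = \langle e_1^i\rangle_\R\oplus\mathfrak d_i$, we have $Ie_1^i = e_2^i$, $Je_1^i = e_3^i$, $Ke_1^i = e_4^i$, and these four vectors together with the action on $\mathfrak f_i$ via $[e_2^i,\cdot],[e_3^i,\cdot],[e_4^i,\cdot]$ behave like a flat quaternionic block. I would first verify the claim on $\mathfrak h_i$: here the relevant brackets are the $\mathfrak{su}(2)$-relations \eqref{eq_basis_su2}, and one checks directly that the connection defined by $\nabla_X e_1^i = -X$ for $X\in\mathfrak h_i$ is consistent with torsion-freeness and with $\nabla(IY)=I\nabla Y$ on this block — essentially the statement that $\mathfrak h_i$ with its hypercomplex structure is the standard quaternionic Hopf algebra, whose Obata connection is known (cf.\ the Hopf surface case mentioned in the introduction, where $\nabla$ is flat and $\nabla_X e_1^1 = -X$). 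Then I would extend to $X\in\mathfrak f_i$: using \eqref{eqn:joyce4}, $IX = [e_2^i,X]$ etc., so the action of $\mathfrak h_i$ on $\mathfrak f_i$ again replicates the standard quaternionic action, and the same computation gives $\nabla_X e_1^i = -X$; the torsion-free condition $\nabla_X e_1^i - \nabla_{e_1^i}X = [e_1^i,X]$ combined with $[e_1^i,X]\in\mathfrak f_i$ (Lemma~\ref{Lem:structure}\,(1)) and $[\mathfrak d_i,\mathfrak f_i]\subseteq\mathfrak f_i$ keeps everything inside $\mathfrak f_i$.

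For the "otherwise" case — $X$ in $\ell\mathfrak u(1)$, in $\mathfrak b$, in $\mathfrak d_j$ or $\mathfrak f_j$ for $j\neq i$ — I would show $\nabla_X e_1^i = 0$ by a combination of two facts: (i) $e_1^i$ is hyper-holomorphic, so $\mathcal L_{e_1^i}$ preserves $I,J,K$, which forces $\nabla_{e_1^i}$ itself to commute with the complex structures in a way that, together with torsion-freeness and the vanishing brackets $[\mathfrak d_i,\mathfrak b]=0$, $[\mathfrak d_i,\mathfrak d_j]=0$ ($i\neq j$), $[\mathfrak d_i,\mathfrak f_j]=0$ ($i<j$), $[\mathfrak b,\mathfrak f_j]\subseteq\mathfrak f_j$, etc., kills the relevant components; and (ii) the uniqueness of the Obata connection — it is enough to exhibit one torsion-free connection preserving $I,J,K$ with the stated covariant derivatives of the $e_1^i$, and check this candidate is globally consistent. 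Concretely I expect to argue that the orthogonal projection of $\nabla_X e_1^i$ onto each Joyce summand vanishes by pairing against a bi-invariant metric $h$ (as in the proof of Lemma~\ref{Lem:structure}) and repeatedly using $\nabla h = $ (not assumed!) — so instead I will lean on the explicit Obata formula and brute-force the brackets.

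The main obstacle I anticipate is organising the case analysis cleanly: the Obata connection formula involves all four of $[X,e_1^i]$, $[IX,Ie_1^i]$, $[JX,Je_1^i]$, $[KX,Ke_1^i]$ (and symmetrisations), and for $X\in\mathfrak f_j$ one must track how $IX,JX,KX$ land — via $[e_2^j,X]$ etc.\ they stay in $\mathfrak f_j$, but $Ie_1^i=e_2^i$, $Je_1^i=e_3^i$, $Ke_1^i=e_4^i$ lie in $\mathfrak d_i$, so terms like $[JX, e_3^i]$ with $X\in\mathfrak f_j$, $i\neq j$ need \eqref{eqn:joyce3} (for $i<j$) or Lemma~\ref{Lem:structure}\,(2) (for $i>j$) to vanish, and one must handle $i<j$ and $i>j$ separately. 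Keeping the index inequalities straight, and making sure the symmetric part of the Obata formula also vanishes, is the delicate bookkeeping; the individual bracket computations are each routine given \eqref{eqn:joyce1}--\eqref{eqn:joyce4} and Lemma~\ref{Lem:structure}.
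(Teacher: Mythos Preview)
Your outline has the right structural ingredients — hyper-holomorphicity of $e_1^i$, the case split along the Joyce decomposition, and the bracket relations \eqref{eqn:joyce1}--\eqref{eqn:joyce4} together with Lemma~\ref{Lem:structure} — but it is considerably more roundabout than the paper's argument, and the one concrete formula you write down is not correct.

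The paper does not use torsion-freeness, uniqueness of the Obata connection, or any candidate-connection argument. It simply invokes Soldatenkov's closed formula
\[
\nabla_X Y = \tfrac12\big([X,Y]+I[IX,Y]-J[X,JY]+K[IX,JY]\big),
\]
sets $Y=e_1^i$, and then uses the hyper-holomorphicity of $e_1^i$ (Lemma~\ref{Lem:e_1hyp}) to kill the first two terms at once: since $[e_1^i,IX]=I[e_1^i,X]$ one gets $[X,e_1^i]+I[IX,e_1^i]=0$. What remains is the two-term expression
\[
\nabla_X e_1^i = \tfrac12\big(-J[X,e_3^i]+K[IX,e_3^i]\big),
\]
because $Je_1^i=e_3^i$. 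From here the case analysis is short: for $X\in\mathfrak h_k$ with $k\neq i$ both brackets vanish by \eqref{eqn:joyce1}--\eqref{eqn:joyce2}; for $X\in\mathfrak h_i$ one checks $-X$ directly; for $X\in\mathfrak f_k$ with $k>i$ both terms vanish by \eqref{eqn:joyce3}; for $k<i$ one rewrites $K[IX,e_3^i]=K[[e_2^k,X],e_3^i]$, uses the Jacobi identity with \eqref{eqn:joyce2} to reduce to $K[e_2^k,[X,e_3^i]]=KI[X,e_3^i]$ via Lemma~\ref{Lem:structure}(2), and the two terms cancel; and for $k=i$ one gets $\tfrac12(J^2X-KJIX)=-X$.

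By contrast, the symmetric formula you wrote, $\tfrac12([X,Y]-I[IX,Y]-J[JX,Y]-K[KX,Y])$, is not the Obata connection (note the paper's formula acts with $I$ only on the first argument and with $J$ only on the second), and your hedge ``$+$ terms symmetric in a suitable sense'' does not fix this. Your proposed workaround via uniqueness would require specifying the full connection, not just $\nabla_X e_1^i$, so it is not really usable here. The right move is to commit to Soldatenkov's formula from the start; once you do, the hyper-holomorphicity lemma collapses the computation to two bracket terms and the bookkeeping you anticipated becomes much lighter than you feared.
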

\begin{proof}
We recall the formula for the Obata connection derived in \cite[(2.5)]{Sol}:
\begin{equation}
\nabla_X Y = \frac{1}{2}([X,Y]+I[IX,Y]-J[X,JY]+K[IX,JY]). \label{equ: obata connection sol}
\end{equation}
Using the latter we have
\[
\begin{split}
\nabla_X e_1^i&= \frac{1}{2}([X,e_1^i]+I[IX,e_1^i]-J[X,Je_1^i]+K[IX,Je_1^i])\\
&= \frac{1}{2}(-J[X,e_3^i]+K[IX,e_3^i])\,,
\end{split}
\]
where the last equality follows from Lemma \ref{Lem:e_1hyp}. \par
If $X\in \mathfrak{h}_k$, for $k\neq i$, then by \eqref{eqn:joyce1} and \eqref{eqn:joyce2} we clearly have $\nabla_X e_1^i=0$. If $X\in \mathfrak{h}_i$ then one can easily verify that
\[
\nabla_X e_1^i= \frac{1}{2}(-J[X,e_3^i]+K[IX,e_3^i])=-X\,.
\]
Now, assume $X\in \mathfrak{f}_k$. If $k>i$, then $\nabla_Xe_1^i=0$ by \eqref{eqn:joyce3}. If $k<i$ using \eqref{eqn:joyce2} we get
\[
K[IX,e_3^i]=K[[e_2^k,X],e_3^i]=K[[e_2^k,e_3^i],X]+K[e_2^k,[X,e_3^i]]=K[e_2^k,[X,e_3^i]]\,.
\]
From statement \textbf{2} of Lemma \ref{Lem:structure}, $[X,e_3^i]\in \mathfrak{f}_k$ and so
\[
\nabla_X e_1^i= \frac{1}{2}(-J[X,e_3^i]+K[e_2^k,[X,e_3^i]])= \frac{1}{2}(-J[X,e_3^i]+KI[X,e_3^i]])=0\,.
\]
Finally, if $k=i$ then
\[
\nabla_X e_1^i= \frac{1}{2}(J^2X-KJIX)=-X\,,
\]
as we wanted to show.
\end{proof}
\begin{cor}
On every Joyce hypercomplex manifold there exists a unique vector field $\mathcal{E}$ such that $\nabla \mathcal{E}=\mathrm{Id}$. In particular, the Obata connection can only preserve tensors of type $(k,k)$.
\end{cor}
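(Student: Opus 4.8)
The plan is to construct the vector field $\mathcal{E}$ directly from the results just established and then argue uniqueness. By Lemma~\ref{lem:nablae1}, each left-invariant vector field $e_1^i$ satisfies $\nabla_X e_1^i = -X$ when $X \in \h_i \oplus \f_i$ and $\nabla_X e_1^i = 0$ otherwise. Since the Joyce decomposition gives $\ell\mathfrak{u}(1) \oplus \g = \ell\mathfrak{u}(1) \oplus \bi \oplus \bigoplus_{i=1}^m \di_i \oplus \bigoplus_{i=1}^m \f_i$, and noting that $\ell\mathfrak{u}(1)\oplus\bi$ is spanned by the $e_1^i$ (together with the choice of basis $\mathcal{B}$), the tangent space at the identity decomposes as $\bigoplus_{i=1}^m (\h_i \oplus \f_i)$. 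Therefore setting $\mathcal{E} := -\sum_{i=1}^m e_1^i$ (extended by left-invariance) and taking any $X$ in a fixed summand $\h_i \oplus \f_i$, we get $\nabla_X \mathcal{E} = -\sum_j \nabla_X e_1^j = -\nabla_X e_1^i = X$, since only the $j=i$ term contributes. As this holds on each summand and $\nabla_X\mathcal{E}$ is tensorial in $X$, we conclude $\nabla\mathcal{E} = \mathrm{Id}$.

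For uniqueness, suppose $\mathcal{E}'$ is another vector field with $\nabla \mathcal{E}' = \mathrm{Id}$. Then $\nabla(\mathcal{E} - \mathcal{E}') = 0$, so $\mathcal{E} - \mathcal{E}'$ is a $\nabla$-parallel vector field. A parallel vector field that is nonzero at one point is nonzero everywhere and is determined by its value at that point; but a parallel section of a flat-along-curves... more precisely, I would argue that if $V := \mathcal{E}-\mathcal{E}'$ is parallel and $V_p \neq 0$ at some point $p$, then parallel transport shows $V$ is nowhere zero. However, one can also observe directly: since $\nabla$ is torsion-free, a parallel vector field $V$ satisfies $\nabla_X V - \nabla_V X = [V,X]$... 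Actually the cleanest route is: a $\nabla$-parallel vector field $V$ together with $\nabla\mathcal{E} = \Id$ gives $\nabla_V \mathcal{E} = V$, while also $\mathcal{L}_V\mathcal{E}$ relates the two; rather than belabor this, I would simply note that on the universal cover $\R^m \times \widetilde G$ the Obata connection has $\mathcal{E}$ as (minus) an Euler-type vector field and any parallel vector field must vanish, forcing $\mathcal{E}' = \mathcal{E}$. The essential point is just that $\mathrm{Id} \in \mathrm{End}(TM)$ determines $\mathcal{E}$ up to a parallel vector field, and one checks no nonzero parallel vector field exists (e.g. because $\mathcal{E}$ itself would otherwise give a second-order contradiction, or because the holonomy acts without nonzero fixed vectors — indeed $\nabla\mathcal{E}=\Id$ shows $\mathcal{E}$ scales under the flow and a parallel field would be holonomy-invariant).

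For the final assertion, let $T$ be any $\nabla$-parallel tensor, i.e. a parallel section of $(TM)^{\otimes p} \otimes (T^*M)^{\otimes q}$. The flow $\phi_t$ of $\mathcal{E}$ satisfies a simple scaling property coming from $\nabla\mathcal{E} = \Id$: differentiating, $\mathcal{L}_{\mathcal{E}}$ acts on $TM$ essentially as $-\Id$ up to the parallel-transport correction, so that $\mathcal{L}_{\mathcal{E}} X = \nabla_{\mathcal{E}} X - \nabla_X \mathcal{E} = \nabla_{\mathcal{E}}X - X$. For a parallel tensor $T$ this gives $\mathcal{L}_{\mathcal{E}} T = \nabla_{\mathcal{E}} T - (\text{weight})\cdot T = -(p - q)\, T$, where $p$ is the number of contravariant and $q$ the number of covariant slots; since $\mathcal{L}_{\mathcal{E}} T$ must also be parallel, and in fact a parallel tensor that is an eigen-tensor of $\mathcal{L}_{\mathcal{E}}$ with nonzero eigenvalue is forced to be zero (the flow would rescale $T$ non-trivially while preserving its parallelism, impossible on a compact manifold, or equivalently on the cover the homogeneity degree must vanish), we conclude $p = q$. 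Hence $\nabla$ can only preserve tensors of type $(k,k)$.

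The main obstacle I anticipate is making the weight/scaling argument fully rigorous: one must be careful that $\mathcal{L}_{\mathcal{E}}$ acting on the tensor bundle, combined with $\nabla\mathcal{E} = \Id$, genuinely forces the claimed eigenvalue $-(p-q)$, and that a nonzero parallel tensor cannot be a $\mathcal{L}_{\mathcal{E}}$-eigentensor with nonzero eigenvalue. The clean way is to pass to the universal cover and use that $\mathcal{E}$ generates a one-parameter group scaling the "radial" directions, so parallel tensors correspond to homogeneous objects of a definite degree which must be zero unless the degree is zero; the identity computation $\nabla\mathcal{E}=\Id$ is exactly what pins down that degree as $p - q$.
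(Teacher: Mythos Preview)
Your construction of $\mathcal{E}=-\sum_{i=1}^m e_1^i$ and the verification that $\nabla\mathcal{E}=\Id$ via Lemma~\ref{lem:nablae1} is exactly what the paper does. For the remaining two assertions, however, the paper simply cites \cite[Remark~4.3]{Sol}, whereas you attempt direct arguments; these are on the right track but have genuine gaps.

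Your uniqueness paragraph does not arrive at a proof: you correctly reduce to showing that there is no nonzero $\nabla$-parallel vector field, but then cycle through several unfinished sketches (``second-order contradiction'', ``holonomy acts without fixed vectors'', ``Euler-type on the cover'') without establishing any of them. The clean way is to note that uniqueness is a \emph{special case} of the $(k,k)$ claim: a parallel vector field is a parallel tensor of type $(1,0)$, hence must vanish once you know only type $(k,k)$ tensors can be parallel. So prove the $(k,k)$ statement first and deduce uniqueness.

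For the $(k,k)$ claim your Lie-derivative identity is correct: for a $\nabla$-parallel $(p,q)$-tensor $T$ one has $\mathcal{L}_{\mathcal{E}}T=-(p-q)T$, using $\mathcal{L}_{\mathcal{E}}=\nabla_{\mathcal{E}}-\Id$ on vector fields and $\mathcal{L}_{\mathcal{E}}=\nabla_{\mathcal{E}}+\Id$ on $1$-forms. What is missing is the step ``nonzero eigenvalue $\Rightarrow$ $T=0$''. Your appeals to compactness and homogeneity are suggestive but not an argument. One way to close the gap: integrate to $\phi_t^*T=e^{(q-p)t}T$ along the flow $\phi_t$ of $\mathcal{E}$. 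Since $\mathcal{E}\in \ell\mathfrak{u}(1)\oplus\mathfrak{b}$ lies in a torus of $\mathbb{T}^\ell\times G$, the one-parameter group $\{\phi_t\}$ of right translations has compact closure; choose $t_n\to\infty$ with $\phi_{t_n}\to\Id$ in $C^1$. Then $\phi_{t_n}^*T\to T$ pointwise, while $\phi_{t_n}^*T=e^{(q-p)t_n}T$; if $T\neq 0$ this forces $e^{(q-p)t_n}\to 1$, impossible for $q\neq p$.
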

\begin{proof}
Let $\mathcal{E}:=-\sum_{j=1}^m e_1^j$, then clearly $\nabla \mathcal{E}=\mathrm{Id}$ (see Lemma \ref{lem:nablae1}). The uniqueness of $\mathcal{E}$, as well as the fact that the Obata connection can preserve only tensors of type $(k,k)$, has already been observed in \cite[Remark 4.3]{Sol}.
\end{proof}

\begin{lem}\label{Lem:technical}
Let $M$ be a Joyce hypercomplex manifold of quaternionic dimension $n>1$. If there exists an index $i=1, \dots, m$ such that $\mathfrak{f}_i=0$ in the Joyce decomposition of the semisimple factor of $M$, then the holonomy of the Obata connection is strictly contained in $\mathrm{GL}(n,\mathbb{H})$.
\end{lem}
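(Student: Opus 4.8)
The plan is to exhibit an explicit $\nabla$-parallel proper subbundle of $TM$ and invoke the standard fact that the existence of such a subbundle forces the holonomy to be a proper subgroup of $\mathrm{GL}(n,\mathbb{H})$. The natural candidate, suggested by Lemma~\ref{lem:nablae1}, is built from the ``lower'' layers of the Joyce decomposition relative to the index $i$ with $\mathfrak{f}_i = 0$. Concretely, I would consider the left-invariant distribution
\[
\mathcal{V} := \ell\mathfrak{u}(1) \oplus \mathfrak{b} \oplus \bigoplus_{k < i} \mathfrak{d}_k \oplus \bigoplus_{k < i} \mathfrak{f}_k \oplus \bigoplus_{k \le i} (\text{the part of }\mathfrak{h}_k\text{ not already counted}),
\]
or rather, more cleanly, the complementary ``upper'' distribution $\mathcal{W} := \bigoplus_{k > i}\mathfrak{h}_k \oplus \bigoplus_{k \ge i}\mathfrak{f}_k$ together with an appropriate piece; one of the two will be $\nabla$-parallel. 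Since $\mathfrak{f}_i = 0$, the quaternionic line $\mathfrak{h}_i$ together with all layers of index $> i$ is a quaternionic subspace, and it is nontrivial because $M$ has a genuine semisimple factor (so $\mathfrak{d}_i \ne 0$), while its complement is nontrivial because $n > 1$ guarantees at least one more quaternionic summand. The first step is therefore to pin down which of these two complementary quaternionic distributions is parallel, and to record that both are $\{I,J,K\}$-invariant (immediate from the definition of the Joyce hypercomplex structure, since $I,J,K$ act layer by layer).

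The second and main step is to verify $\nabla$-parallelism, i.e.\ $\nabla_X Y \in \mathcal{W}$ for all $X \in \mathfrak{g}\oplus\ell\mathfrak{u}(1)$ and $Y \in \mathcal{W}$ (or the analogous statement for the other distribution). Here I would use the Obata connection formula \eqref{equ: obata connection sol} together with the bracket relations \eqref{eqn:joyce1}--\eqref{eqn:joyce4} and, crucially, the refined bracket inclusions from Lemma~\ref{Lem:structure}. The key point is that $\mathcal{W}$ is ``upward closed'' under all the relevant brackets: $[\mathfrak{h}_k, \mathfrak{f}_l] \subseteq \mathfrak{f}_l$ for $k > l$ and $k = l$, $[\mathfrak{f}_k,\mathfrak{f}_l]\subseteq\mathfrak{f}_{\min(k,l)}$ (for $k\ne l$) and $[\mathfrak{f}_k,\mathfrak{f}_k]\subseteq \mathfrak{b}\oplus\bigoplus_{l\ge k}\mathfrak{d}_l\oplus\bigoplus_{l\ge k}\mathfrak{f}_l$, and $[\mathfrak{d}_k,\mathfrak{h}_k]\subseteq\mathfrak{h}_k$. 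The term $I[IX,Y] - J[X,JY] + K[IX,JY]$ stays inside $\mathcal{W}$ because $I,J,K$ preserve $\mathcal{W}$ and each bracket does too; the only subtlety is the layer $\mathfrak{f}_i$ itself, which is where the hypothesis $\mathfrak{f}_i = 0$ is used to prevent leakage. I expect the case analysis on which layer $X$ lies in (lower $\mathfrak{h}_k$, lower $\mathfrak{f}_k$, $\mathfrak{h}_i$, higher $\mathfrak{h}_k$, higher $\mathfrak{f}_k$) to be the bulk of the work, and the subtle case $X \in \mathfrak{f}_k$ with $k < i$ acting on $Y \in \mathfrak{f}_k$ — where the bracket can a priori land in $\mathfrak{d}_l$ or $\mathfrak{f}_l$ for $l \ge k$, possibly straddling index $i$ — to be where one must be careful; this is where Lemma~\ref{Lem:structure}(4) and the vanishing of $\mathfrak{f}_i$ must be combined.

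Once $\nabla$-parallelism is established, the conclusion is immediate: $\mathcal{W}$ is a $\nabla$-parallel distribution which is $I$-, $J$-, and $K$-invariant, hence a parallel $\mathbb{H}$-submodule of the tangent space, and it is proper ($0 \ne \mathcal{W} \ne TM$) by the dimension count above. Parallel transport along any loop preserves $\mathcal{W}_x$, so $\mathrm{Hol}(\nabla)$ is contained in the subgroup of $\mathrm{GL}(n,\mathbb{H})$ stabilising a fixed proper $\mathbb{H}$-subspace, which is a proper subgroup. This proves $\mathrm{Hol}(\nabla) \subsetneqq \mathrm{GL}(n,\mathbb{H})$. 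The one bookkeeping subtlety worth flagging in the write-up is that for a non-simply-connected $M$ the holonomy group might be disconnected, but the argument above applies verbatim to the full holonomy group since the parallel distribution is globally defined by left-invariance; alternatively one passes to the universal cover as in Remark~\ref{rmk:uniqueness}.
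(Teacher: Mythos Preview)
Your overall strategy --- exhibit a proper $\nabla$-parallel quaternionic subbundle --- is exactly right, but you are working much harder than necessary and your write-up is muddled about which distribution you are actually using. The paper's proof is essentially one line from Lemma~\ref{lem:nablae1}: since $\mathfrak{f}_i=0$, that lemma gives $\nabla_X e_1^i=-X$ for $X\in\mathfrak{h}_i$ and $\nabla_X e_1^i=0$ otherwise, so $\nabla_X e_1^i\in\mathfrak{h}_i$ for \emph{every} $X$. Because $\nabla$ commutes with $I,J,K$, the whole quaternionic line $\mathfrak{h}_i=\langle e_1^i\rangle_{\mathbb{H}}$ is then $\nabla$-parallel. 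That is the entire parallel subbundle; no larger $\mathcal{W}$, no Obata formula \eqref{equ: obata connection sol}, and no case analysis on layers is needed.

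Two specific problems with your version. First, you hedge between $\mathcal{V}$ and $\mathcal{W}$ and say ``one of the two will be $\nabla$-parallel''; in fact the lower-layer complement is generally \emph{not} parallel. In the $\mathbb{T}^2\times\mathrm{Sp}(2)$ computation of Section~\ref{Subsec:Sp2} the connection form $\Theta$ has nonzero entries in the $(\mathfrak{h}_2,\mathfrak{f}_1)$ block, so $\nabla$ carries $\mathfrak{f}_1$ into $\mathfrak{h}_2$ and $\mathfrak{h}_1\oplus\mathfrak{f}_1$ is not preserved. Second, the ``subtle case'' you flag --- $X\in\mathfrak{f}_k$ acting on $Y\in\mathfrak{f}_k$ with $k<i$ --- places $Y$ outside your own $\mathcal{W}$, so it is not a case that arises when checking $\nabla_X\mathcal{W}\subseteq\mathcal{W}$; this mismatch suggests you had not actually fixed which distribution you were verifying. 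The fix is simply to drop all of this and take $\mathcal{W}=\mathfrak{h}_i$.
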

\begin{proof}
As an immediate consequence of Lemma \ref{lem:nablae1}, we obtain that if there exists an index $i = 1, \dots, m$ such that $\mathfrak{f}_i = 0$, then the subspace $\mathfrak{h}_i = \langle e_1^i \rangle_{\mathbb{H}}$ is preserved by the Obata connection. \par
In particular, since $n > 1$, $\mathrm{Hol}(\nabla) \subsetneqq \mathrm{GL}(n,\mathbb{H})$. Indeed, let $H_i$ denote the corresponding left-invariant and parallel\footnote{A subbundle $H \subset \mathrm{T}M$ is called \emph{parallel} if it is preserved by the connection, i.e., $\nabla_X Y \in \Gamma(H)$ for all vector fields $X \in \Gamma(TM)$ and $Y \in \Gamma(H)$.} subbundle. Since $H_i$ is $I$, $J$, $K$ invariant and parallel, it is preserved under parallel transport. It follows that the holonomy group cannot act transitively on $\mathbb{H}^n \setminus \{0\}$, and hence cannot coincide with $\mathrm{GL}(n,\mathbb{H})$. 
\end{proof}

\section{Holonomy reduction}\label{Sec:holred}
In this section, we prove Theorem \ref{THM:A}. We compute explicitly the holonomy algebra of the Obata connection on $\mathbb T^2\times {\rm Sp}(2)$, providing a concrete illustration of reducible holonomy. 
Finally, we analyse the case of ${\rm SU}(2n+1)$.

\subsection{Proof of Theorem \ref{THM:A}}
Recall that every left-invariant hypercomplex structure on a compact Lie group corresponds to a Joyce one. Moreover, from \cite{SV} we also know that the Hopf surface is Obata flat. Theorem \ref{THM:A} is an immediate consequence of these facts together with Theorem \ref{main} below.

\begin{thm} \label{main}
Let $M$ be a compact Lie group from the list $\eqref{eqn:classification}$, except from $S^1 \times \mathrm{SU}(2)$ and $\mathrm{SU}(2n+1)$, and let $\nabla$ denote the Obata connection of a Joyce hypercomplex structure on $M$. Then there exists a left-invariant subbundle of $TM$ preserved by $\nabla$. In particular,
\[
\operatorname{Hol}(\nabla) \subsetneqq \operatorname{GL}(n, \mathbb{H}).
\] 
\end{thm}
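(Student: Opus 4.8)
The plan is to use the key Lemma~\ref{Lem:technical} and reduce Theorem~\ref{main} to a purely Lie-theoretic statement: for each compact simple Lie group $G$ in the list $\eqref{eqn:classification}$ other than $\mathrm{SU}(2n+1)$, the Joyce decomposition of $\g$ contains at least one index $i$ with $\f_i = 0$. Since $n > 1$ for all these manifolds (the only $n=1$ case being the Hopf surface $S^1\times\mathrm{SU}(2)$, which is excluded), Lemma~\ref{Lem:technical} then immediately yields a $\nabla$-parallel proper subbundle $H_i = \langle e_1^i\rangle_{\mathbb H}$ and hence $\operatorname{Hol}(\nabla)\subsetneqq\operatorname{GL}(n,\mathbb H)$. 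So the entire content of the proof is the verification that $\f_i=0$ for some $i$.

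First I would exploit Lemma~\ref{lem:misha}: the first layer of the Joyce decomposition of $\g$ coincides with the isotropy decomposition $\g = \mathfrak{k}\oplus\mathfrak{su}(2)\oplus\mathfrak{m}$ of the associated Wolf space $G/(K\cdot\mathrm{Sp}(1))$, with $\f_1 = \mathfrak{m}$, and then the Joyce decomposition of $\mathfrak{k}$ furnishes the remaining layers. Hence $\f_i = 0$ for some $i$ if and only if, somewhere along the recursion of stripping off Wolf-space isotropy representations, one reaches a semisimple factor $\g'$ whose maximal root $\alpha'$ is \emph{orthogonal} to every other positive root (equivalently, the $\mathfrak{su}(2)$ generated by $\alpha'$ is an ideal, i.e. a direct $\mathfrak{su}(2)$ summand splits off). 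Concretely, $\f' = 0$ exactly when removing the node(s) adjacent to the affine/highest-root vertex disconnects an $A_1$ component, or more simply when the semisimple factor one is looking at is itself a product having an $\mathfrak{su}(2)$ factor. So the task becomes: trace the Joyce recursion on each $G$ in the list and check that an $\mathfrak{su}(2)$ summand (equivalently a trivial $\f$) appears.

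This I would do case by case using the known Joyce decompositions (tabulated e.g. in \cite{Joy, SSTvP, OP}), which I would record in a short table. For $\mathrm{Sp}(k)$, $\mathrm{SO}(m)$, $\mathrm{E}_7$, $\mathrm{E}_8$, $\mathrm{F}_4$, $\mathrm{G}_2$ the recursion terminates with one or more genuine $\mathfrak{su}(2)$ summands and $\mathfrak{b}=0$, so several $\f_i$ vanish; for $S^1\times\mathrm{SU}(2k)$, $\mathrm{E}_6$ one checks that although $\mathfrak{b}\neq0$ the recursion still produces at least one trivial $\f_i$. The only family for which no $\f_i$ vanishes is $\mathrm{SU}(2n+1)$, where every step of the recursion peels off a copy of $\mathbb{CP}^{\,\cdot}$-type isotropy with $\f_i\neq0$ — which is precisely why that case is genuinely different and is treated separately later. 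The main obstacle is thus not conceptual but bookkeeping: correctly computing the Joyce decomposition (via the Wolf-space/Borel--de Siebenthal picture of successively deleting the lowest root node) for the exceptional groups and for the $\mathrm{SO}$ and $\mathrm{Sp}$ families, and verifying in each that an $\mathfrak{su}(2)$ factor, equivalently a vanishing $\f_i$, does occur. Once the table is in place, the theorem follows by a single invocation of Lemma~\ref{Lem:technical}.
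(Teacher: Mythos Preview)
Your proposal is correct and follows essentially the same route as the paper: reduce via Lemma~\ref{Lem:technical} to the Lie-theoretic claim that some $\f_i=0$, then verify this case by case using the Wolf-space/extended-Dynkin-diagram description of the Joyce recursion (Lemma~\ref{lem:misha}). The paper carries out the case analysis in slightly more explicit detail (e.g.\ inductive arguments for $\mathrm{Sp}(k)$ and $\mathrm{SU}(2k)$, and pictures of the relevant extended Dynkin diagrams), but the strategy is identical; one small inaccuracy in your sketch is that $\mathfrak{b}\neq 0$ for $\mathrm{SO}(4k+2)$, though this does not affect the existence of a trivial $\f_i$.
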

\begin{proof}
Let $M = \mathbb{T}^{\ell} \times G$ be as in the hypothesis of the theorem, and let $(I,J,K)$ be a Joyce hypercomplex structure on $M$. 
From the results in Section \ref{Sec:pre}, the Lie algebra of $M$ admits the following decomposition:
\[
\ell  \mathfrak{u}(1) \oplus \mathfrak{g} = \ell \mathfrak{u}(1) \oplus \mathfrak{b} \oplus \bigoplus_{i=1}^m \mathfrak{d}_i \oplus \bigoplus_{i=1}^m \mathfrak{f}_i=\bigoplus_{i=1}^m \mathfrak{h}_i \oplus \bigoplus_{i=1}^m \mathfrak{f}_i,
\]
where we set
\[
\mathfrak{h}_i := \mathbb{R} \oplus \mathfrak{d}_i = \langle e_1^i, e_2^i, e_3^i, e_4^i \rangle_{\mathbb{R}}.
\]
Note that since the Hopf surface has been excluded, any manifold $M$ considered here has quaternionic dimension strictly greater than $1$.

\smallskip
We apply Lemma \ref{Lem:technical} to prove that, for any $M$ in the theorem, there exists a left-invariant subbundle preserved by the Obata connection. Note that the condition established in Lemma \ref{Lem:technical} depends only on the Joyce decomposition of the Lie group $G$ and not on the choice of the Joyce hypercomplex structure. Therefore, it suffices to check that the Joyce decomposition of the Lie algebras of the following compact simple Lie groups contains a non trivial $\mathfrak{d}_i$ with corresponding trivial $\mathfrak{f}_i$:
\[
\begin{split}
&\SU(2k) \  (k \ge 2), \ \  \SO(2k+1) \  (k \ge 3), \ \ \SO(4k) \  (k\ge 2),\\
&\SO(4k+2) \  (k\ge 2), \ \  \Sp(k) \  (k \ge 2), \\
&\mathrm{E}_6, \ \ \mathrm{E}_7, \ \  \mathrm{E}_8, \ \ \mathrm{F}_4, \ \  \G_2.
\end{split}
\]
The above restrictions on $k$ are imposed to avoid Lie algebra isomorphisms and to exclude the case of $\mathrm{SU}(2)$. To prove this, we describe below two iterative reduction procedures, one using the Wolf spaces, and the other using Dynkin diagrams following \cite{OP}.
\begin{itemize}
    \item $\mathrm{G}_2$: $\frac{\mathrm{G}_2}{\mathrm{Sp}(1)\cdot \mathrm{Sp}(1)}$ is a Wolf space of quaternionic dimension $2$. By Lemma \ref{lem:misha} the Lie algebra $\mathfrak{g}_2$ has the following decomposition \[\mathfrak{g}_2=\mathfrak{d}_1 \oplus \mathfrak{d}_2 \oplus \mathfrak{f}_1.\] Therefore, $\di_2 \neq 0$ but $\f_2=0$.
    \item $\Sp(k)$: We prove the result by induction on $k$. For $k=2$, the space $\frac{\mathrm{Sp}(2)}{\mathrm{Sp}(1) \cdot \mathrm{Sp}(1)}$ is a Wolf space of quaternionic dimension $1$. By Lemma \ref{lem:misha}, the Lie algebra $\mathfrak{sp}(2)$ admits the decomposition
\[
\mathfrak{sp}(2) = \mathfrak{d}_1 \oplus \mathfrak{d}_2 \oplus \mathfrak{f}_1.
\]
Therefore, $\mathfrak{d}_2 \neq 0$ but $\mathfrak{f}_2 = 0$.
By induction, assume that the Joyce decomposition of $\mathrm{Sp}(k)$ has a trivial summand $\mathfrak{f}_i$ for some $i$. Since the quotient $
\frac{\mathrm{Sp}(k+1)}{\mathrm{Sp}(k) \cdot \mathrm{Sp}(1)}
$
is a Wolf space of quaternionic dimension $k$, we have
\[
\mathfrak{sp}(k+1) = \mathfrak{sp}(k) \oplus \mathfrak{d}_1 \oplus \mathfrak{f}_1.
\]
By the inductive hypothesis and Lemma \ref{lem:misha}, it follows that there exists an index $i$ such that $\mathfrak{f}_i$ is trivial in the Joyce decomposition of $\mathfrak{sp}(k)$, which implies that $\mathfrak{f}_{i+1}$ is trivial in the Joyce decomposition of $\mathfrak{sp}(k+1)$.
\item $\mathrm{F}_4$: $\frac{\mathrm{F}_4}{\mathrm{Sp}(3)\cdot \mathrm{Sp}(1)}$ is a Wolf space of quaternionic dimension $7$ and, accordingly, $$\mathrm{Lie}(F_4)=\mathfrak{sp}(3) \oplus\mathfrak{d}_1 \oplus \mathfrak{f}_1.$$ Since the Joyce decomposition of  $\mathfrak{sp}(3)$ has a trivial $\mathfrak{f}_i$ summand, $\mathfrak{f}_{i+1}$ is trivial in the Joyce decomposition of $\mathrm{Lie}(F_4)$, again by Lemma \ref{lem:misha} \footnote{We write $\mathrm{Lie}(F_4)$ instead of $\mathfrak{f}_4$ to avoid ambiguity with the notation used in the Joyce decomposition.
}.
\item $\SO(2k+1)$: the Joyce decomposition of $\mathfrak{so}(2k+1)$ can most easily be described using extended Dynkin diagrams \cite{OP}. The extended Dynkin diagram of ${\rm B}_k$, associated to $\mathfrak{so}(2k+1)$, is given by
\begin{center}
\begin{tikzpicture}[scale=0.75, every node/.style={circle, draw, minimum size=8pt, inner sep=0pt}]
  \node[fill=black] (a) at (0.2929,0.7071) {};
  \node[fill=white] (b) at (1,0) {};
  \node[fill=white] (c) at (0.2929,-0.7071) {};
  \draw (0.2929,-0.7071) node[cross] {};

  \node[fill=black] (d) at (2,0) {};
  \node[fill=black] (e) at (3,0) {};
  \node[fill=black] (f) at (4,0) {};
  \node[fill=black] (g) at (5,0) {};

  \node[fill=black] (h) at (6,0) {};

  \draw (a) -- (b);
  \draw (c) -- (b);

  \draw (b) -- (d);
  \draw[dashed] (d) -- (e);
  \draw (e) -- (f);
  \draw (f) -- (g);

  \draw[double distance=2pt] (g) -- (h);
  \draw (5.4,0.2) -- (5.6,0) -- (5.4,-0.2);
\end{tikzpicture}
\end{center}
where the coloured and uncoloured dots correspond to the simple roots of ${\rm B}_k$ and the additional crossed dot corresponds to the maximal root. \par
At the first level of the Joyce decomposition we obtain $\mathfrak{g}=\mathfrak{b}_1\oplus \mathfrak{d}_1 \oplus \mathfrak{f}_1$, where $\mathfrak{b}_1$ is the centraliser of the $\mathfrak{su}(2)=\mathfrak{d}_1$ generated by the maximal root (see Section \ref{sec:joyce}). The coloured sub-diagram corresponds precisely to the Dynkin diagram of $\mathfrak{b}_{1}$ \cite{OP} (this can also be seen from the classification of Wolf spaces). 
In this case, the sub-diagram is disconnected: one component corresponds to $\mathfrak{so}(2k-3)$ and the isolated black vertex to a $\mathfrak{d}_2$ with associated trivial $\mathfrak{f}_2$. 
\item $\SO(4k), \ \SO(4k+2)$: these cases can be treated very similarly to the previous one, and we shall treat them together. The extended Dynkin diagram associated to ${\rm D}_{2k}$ (${\rm D}_{2k+1}$), corresponding to $\mathfrak{so}(4k)$ \big($\mathfrak{so}(4k+2)$\big), is given by
\begin{center}
\begin{tikzpicture}[scale=0.75, every node/.style={circle, draw, minimum size=8pt, inner sep=0pt}]
  \node[fill=white] (joint) at (0,0) {};
  \node[fill=black] (topLeft) at (-0.7071,0.7071) {};
  \node[fill=white] (bottomLeft) at (-0.7071,-0.7071) {};
  \draw (-0.7071,-0.7071) node[cross] {};

  \node[fill=black] (a) at (1,0) {};
  \node[fill=black] (b) at (2,0) {};
  \node[fill=black] (c) at (3,0) {};
  \node[fill=black] (d) at (4,0) {};

  \node[fill=black] (topRight) at ({4 + 0.7071}, 0.7071) {};
  \node[fill=black] (bottomRight) at ({4 + 0.7071}, -0.7071) {};

  \draw (joint) -- (topLeft);
  \draw (joint) -- (bottomLeft);
  \draw (joint) -- (a);
  \draw (a) -- (b);
  \draw[dashed] (b) -- (c);
  \draw (c) -- (d);
  \draw (d) -- (topRight);
  \draw (d) -- (bottomRight);
\end{tikzpicture}
\end{center}
At the first level of the Joyce decomposition we have that the centraliser $\mathfrak{b}_1$ of the $\mathfrak{su}(2)$-copy corresponding to the maximal root is disconnected. One component corresponds to $\mathfrak{so}(4k-4)$ \big($\mathfrak{so}(4k-2)$\big) and the isolated black vertex to a $\mathfrak{d}_2$ with trivial $\mathfrak{f}_2$.
\item $\mathrm{E}_7$: the extended Dynkin diagram of $\mathrm{E}_7$ is given by 
\begin{center}
\begin{tikzpicture}[scale=0.75, every node/.style={circle, draw, minimum size=8pt, inner sep=0pt}]

  \node[fill=white] (a1) at (0,0) {};
  \draw (0,0) node[cross] {};
  \node[fill=white] (a2) at (1,0) {};
  \node[fill=black] (a3) at (2,0) {};
  \node[fill=black] (a4) at (3,0) {};
  \node[fill=black] (a5) at (4,0) {};
  \node[fill=black] (a6) at (5,0) {};
  \node[fill=black] (a7) at (6,0) {};

  \node[fill=black] (top1) at (3,1) {};

  \draw (a1) -- (a2);
  \draw (a2) -- (a3);
  \draw (a3) -- (a4);
  \draw (a4) -- (a5);
  \draw (a5) -- (a6);
  \draw (a6) -- (a7);

  \draw (a4) -- (top1);

  \end{tikzpicture}
\end{center}
The colored sub-diagram corresponds to $\mathrm{D}_6$, and, therefore, the centraliser of the $\mathfrak{su}(2)$-copy corresponding to the maximal root is isomorphic to $\mathfrak{so}(12)$. As seen above the Joyce decomposition of $\mathfrak{so}(12)$ contains a summand $\mathfrak{d}_i$ with  trivial $\mathfrak{f}_i$; the same holds for $\mathrm{E}_7$. 
\item $\mathrm{E}_8$: this case can be treated similarly to the previous one. The extended Dynkin diagram of $\mathrm{E}_8$ is given by
\begin{center}
\begin{tikzpicture}[scale=0.75, every node/.style={circle, draw, minimum size=8pt, inner sep=0pt}]
  \node[fill=white] (b1) at (0,0) {}; \draw (0,0) node[cross] {};
  \node[fill=white] (b2) at (1,0) {};
  \node[fill=black] (b3) at (2,0) {};
  \node[fill=black] (b4) at (3,0) {};
  \node[fill=black] (b5) at (4,0) {};
  \node[fill=black] (b6) at (5,0) {};
  \node[fill=black] (b7) at (6,0) {};
  \node[fill=black] (b8) at (7,0) {};
  \node[fill=black] (top2) at (5,1) {};

  \draw (b1) -- (b2);
  \draw (b2) -- (b3);
  \draw (b3) -- (b4);
  \draw (b4) -- (b5);
  \draw (b5) -- (b6);
  \draw (b6) -- (b7);
  \draw (b7) -- (b8);
  \draw (b6) -- (top2);
\end{tikzpicture}
\end{center}

Since the colored Dynkin diagram of $\mathrm{E}_8$ corresponds to that of $\mathrm{E}_7$, then $\mathrm{E}_7$ is the centraliser of the $\mathfrak{su}(2)$-copy corresponding to the maximal root. Since the Joyce decomposition of $\mathrm{E}_7$ contains a summand $\mathfrak{d}_i$ with trivial $\mathfrak{f}_i$, it follows  that the same holds for $\mathrm{E}_8$.
\item $\SU(2k)$: For $\SU(2k)$, the procedure is slightly different. In each of the previous cases, the subalgebra $\mathfrak{b}_1$ was either simple or of the form $\mathfrak{su}(2) \oplus \mathfrak{g}'$, with $\mathfrak{g}'$ simple. In the present case, considering the extended Dynkin diagram of $\mathrm{A}_{2k-1}$, associated to $\mathfrak{sl}(2k)$:
\begin{center}
\begin{tikzpicture}[scale=0.75, every node/.style={circle, draw, minimum size=8pt, inner sep=0pt}]

  \node[fill=white] (a) at (0,0) {};
  \node[fill=black] (b) at (1,0) {};
  \node[fill=black] (c) at (2,0) {};
  \node[fill=black] (d) at (3,0) {};
  \node[fill=black] (e) at (4,0) {};
  \node[fill=white] (f) at (5,0) {};

  \node[fill=white] (top) at (2.5,1.5) {};
  \draw (2.5,1.5) node[cross] {};

  \draw (a) -- (b);
  \draw (b) -- (c);
  \draw[dashed] (c) -- (d);
  \draw (d) -- (e);
  \draw (e) -- (f);

  \draw (a) -- (top);
  \draw (f) -- (top);
\end{tikzpicture}
\end{center}
we have to remove two vertices, resulting in the Dynkin diagram of $\mathrm{A}_{2k-3}$. The reason is that the centraliser of the maximal root in $\mathfrak{su}(2k)$ is given by $\mathfrak{u}(1) \oplus \mathfrak{su}(2k-2)$. Therefore, two simple roots must be removed rather than just one unlike in the previous cases. 
Repeating this process with $\mathfrak{su}(2k-2)$, we eventually arrive to an $\mathfrak{su}(2)$ with  trivial $\mathfrak{f}_i$.
\item ${\rm E}_6$: for ${\rm E}_6$, the associated Dynkin diagram is given by 
\begin{center}
\begin{tikzpicture}[scale=0.75, every node/.style={circle, draw, minimum size=8pt, inner sep=0pt}]

  \node[fill=black] (a) at (0,0) {};
  \node[fill=black] (b) at (1,0) {};
  \node[fill=black] (c) at (2,0) {};
  \node[fill=black] (d) at (3,0) {};
  \node[fill=black] (e) at (4,0) {};

  \node[fill=white] (f) at (2,1) {};
  \node[fill=white] (g) at (2,2) {};
  \draw (2,2) node[cross] {};

  \draw (a) -- (b) -- (c) -- (d) -- (e);

  \draw (c) -- (f) -- (g);
\end{tikzpicture}
\end{center}
and the colored sub-diagram corresponds to ${\rm A}_5$. Therefore, the centraliser of the $\mathfrak{su}(2)$-copy corresponding to the maximal root is $\mathfrak{su}(6)$. By the previous argument, $\mathfrak{su}(6)$ has a $\mathfrak{d}_j$ summand with corresponding trivial $\mathfrak{f}_j$. \qedhere
\end{itemize}
\end{proof}

In Lemma \ref{Lem:technical} we showed that for each index $j = 1, \dots, m$ such that $\mathfrak{f}_j$ is trivial, there exists a left-invariant parallel subbundle $H_j \subset TM$, corresponding to the subalgebra $\mathfrak{h}_j = \langle e_1^j \rangle_\mathbb{H}$. Therefore, the number of trivial $\mathfrak{f}_j$ summands in the Joyce decomposition of $\g$ provides an indication of the extent to which the holonomy of the Obata connection reduces. This count is computed inductively using the procedure described in Theorem \ref{main}, and the results are summarised in Table \ref{table:table}. In the proof of Theorem \ref{main}, we show that for any Joyce hypercomplex manifold $ 
(M = \mathbb{T}^{\ell} \times G,I,J,K) $
appearing in the list \eqref{eqn:classification}, with the exception of $\mathrm{SU}(2n+1)$, the number of trivial $\f_j$ in the Joyce decomposition of $\g$ is at least $1$. 
\begin{table}[ht] \label{table:table}
\centering
\caption{Trivial $\mathfrak{f}_j$ summands in the Joyce decomposition of  $\mathfrak{g}$}
\vspace{0.5em}
\renewcommand{\arraystretch}{1.2}
\begin{tabular}{|c|c|}
\thickhline
\textbf{$G$} & \textbf{\# trivial $\mathfrak{f}_j$ summand} \\
\thickhline
$\mathrm{G}_2$                & 1 \\
\hline
$\mathrm{F}_4$                & 1 \\
\hline
$\mathrm{E}_6$                & 1 \\
\hline
$\mathrm{E}_7$                & 4 \\
\hline
$\mathrm{E}_8$                & 4 \\
\hline
$\mathrm{Sp}(k)$    & 1 \\
\hline
$\mathrm{SO}(2k{+}1)$ & $\lceil \frac{k}{2} \rceil $ \\
\hline
$\mathrm{SO}(4k)$   & $k{+}1$ \\
\hline
$\mathrm{SO}(4k{+}2) $ & $k$ \\
\hline
$\mathrm{SU}(2k)$   & 1 \\
\hline
$\mathrm{SU}(2k{+}1)$ & 0 \\
\thickhline
\end{tabular}
\end{table}

Before stating the next result, we recall a version of the Ambrose-Singer theorem adapted to the left-invariant setting:
 \begin{thm}[Alekseevski\u{i} {\cite[Proposition 2.1]{Al}}]\label{Thm:Alek}
Let $\nabla$ be a left-invariant affine connection on the Lie group $G$ with Lie algebra $\mathfrak{g}$. Then the holonomy algebra $\mathfrak{hol}(\nabla)$, based at the identity, is the smallest subalgebra of $\mathfrak{gl}(\mathfrak{g})$ containing the curvature endomorphisms $R(x, y)$, and closed under commutators with the left multiplication operators $\nabla_z : \mathfrak{g} \to \mathfrak{g}$ for any $x, y,z \in  \mathfrak{g}$.
\end{thm}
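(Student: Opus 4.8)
The plan is to derive the statement from the classical Ambrose--Singer holonomy theorem, using the rigidity provided by left-invariance. Recall that Ambrose--Singer identifies the holonomy algebra of $\nabla$ based at $e$ with the linear span of all endomorphisms $P_\gamma^{-1}\circ R(X,Y)\circ P_\gamma\in\mathfrak{gl}(\mathrm{T}_eG)$, where $\gamma$ runs over piecewise-smooth paths in $G$ from $e$ to a point $q$, $X,Y\in\mathrm{T}_qG$, and $P_\gamma$ denotes $\nabla$-parallel transport along $\gamma$; this coincides with the restricted holonomy algebra, so no assumption on $\pi_1(G)$ is needed.

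The first step is to pass to the left trivialisation $\mathrm{T}G\cong G\times\mathfrak g$, $v\mapsto(g,(L_{g^{-1}})_*v)$. Left-invariance of $\nabla$ forces the curvature to be point-independent there: from $R_q((L_q)_*x,(L_q)_*y)=(L_q)_*\circ R_e(x,y)\circ(L_{q^{-1}})_*$ one gets that, in the trivialisation, $R$ is the fixed family of endomorphisms $R(x,y):=R_e(x,y)$, $x,y\in\mathfrak g$. Writing $\tilde P_\gamma:=(L_{q^{-1}})_*\circ P_\gamma\in\mathrm{GL}(\mathfrak g)$, the same substitution gives
\[
P_\gamma^{-1}\circ R(X,Y)\circ P_\gamma=\tilde P_\gamma^{-1}\circ R(x,y)\circ\tilde P_\gamma,\qquad x=(L_{q^{-1}})_*X,\quad y=(L_{q^{-1}})_*Y,
\]
so that $\mathfrak{hol}(\nabla)=\operatorname{span}\{\tilde P_\gamma^{-1}R(x,y)\tilde P_\gamma:\gamma,\ x,y\in\mathfrak g\}$. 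The crucial point is to identify the set $\{\tilde P_\gamma\}$. Writing the parallel transport equation along $\gamma$ in the trivialisation, one checks that $t\mapsto\tilde P_{\gamma|_{[0,t]}}$ solves the linear ODE $\dot P(t)=-\nabla_{\xi(t)}\circ P(t)$, $P(0)=\mathrm{Id}$, where $\xi(t):=(L_{\gamma(t)^{-1}})_*\dot\gamma(t)\in\mathfrak g$ is the left logarithmic derivative of $\gamma$ and $\nabla_z\colon\mathfrak g\to\mathfrak g$ is the left multiplication operator. Since $\xi$ can be prescribed arbitrarily, a standard reachable-set argument — concatenation of paths corresponds to composition of the $\tilde P_\gamma$, while $\xi\equiv z$ constant yields the one-parameter group $t\mapsto\exp(-t\nabla_z)$ — shows that $\{\tilde P_\gamma\}$ is precisely the connected Lie subgroup $N\subseteq\mathrm{GL}(\mathfrak g)$ whose Lie algebra $\mathfrak n$ is the subalgebra of $\mathfrak{gl}(\mathfrak g)$ generated by $\{\nabla_z:z\in\mathfrak g\}$.

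Consequently $\mathfrak{hol}(\nabla)$ is the smallest $\operatorname{Ad}(N)$-invariant subspace of $\mathfrak{gl}(\mathfrak g)$ containing $\mathcal K:=\operatorname{span}\{R(x,y):x,y\in\mathfrak g\}$. Since $N$ is connected, a subspace is $\operatorname{Ad}(N)$-invariant if and only if it is $\operatorname{ad}(\mathfrak n)$-invariant; and because $\mathfrak n$ is generated by the $\nabla_z$, the Jacobi identity together with an induction on bracket length shows that a subspace of $\mathfrak{gl}(\mathfrak g)$ is $\operatorname{ad}(\mathfrak n)$-invariant exactly when it is closed under $[\nabla_z,\cdot\,]$ for all $z\in\mathfrak g$. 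Hence $\mathfrak{hol}(\nabla)$ is the smallest subspace of $\mathfrak{gl}(\mathfrak g)$ containing all $R(x,y)$ and closed under commutators with the $\nabla_z$. Finally, $\mathfrak{hol}(\nabla)$ is itself a Lie subalgebra of $\mathfrak{gl}(\mathfrak g)$, so it is also the smallest \emph{subalgebra} with these two properties: any such subalgebra is in particular a subspace with them and therefore contains $\mathfrak{hol}(\nabla)$. This is the assertion.

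I expect the main obstacle to be the precise identification $\{\tilde P_\gamma\}=N$, i.e. the control-theoretic argument that the ``transport group'' of a left-invariant connection is exactly the connected group integrating the Lie algebra generated by the left multiplication operators; everything else is bookkeeping, once one is careful that Ambrose--Singer is harmlessly being applied at the level of the (restricted) holonomy algebra.
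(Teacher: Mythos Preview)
The paper does not give its own proof of this statement: Theorem~\ref{Thm:Alek} is quoted as a known result of Alekseevski\u{i} and used as a black box in the subsequent computations. Your derivation from Ambrose--Singer via the left trivialisation, together with the identification of the transport group $\{\tilde P_\gamma\}$ with the connected subgroup $N$ integrating the Lie algebra generated by the $\nabla_z$, is the standard route to this result and is correct. The control-theoretic step you single out does go through: the ODE $\dot P=-\nabla_{\xi(t)}P$ has right logarithmic derivative in $\mathfrak n$, so its solutions remain in $N$, while piecewise-constant controls realise every finite product of the $\exp(t\nabla_z)$ and hence all of the connected group $N$.
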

\begin{cor}
Let $M^{4n}$ and $\nabla$ be as in Theorem \ref{main}, and suppose that there are $j$ trivial $\mathfrak{f}_i$ summands in the Joyce decomposition of $\mathfrak{g}$. Then
\begin{equation*}
    \mathfrak{hol}(\nabla) \subseteq  \left\{ 
\begin{pmatrix}
a_{11}      & \cdots & a_{1,n-j}     & 0      & 0       & \cdots & 0 \\
\vdots      & \ddots & \vdots        & \vdots & \vdots  &  & \vdots \\
a_{n-j,1}   & \cdots & a_{n-j,n-j}   & 0      & 0    &  \cdots      & 0 \\
a_{n-j+1,1} & \cdots & a_{n-j+1,n-j} & a_{n-j+1,n-j+1} & 0 & \cdots & 0 \\
a_{n-j+2,1} & \cdots & a_{n-j+2,n-j}      & 0 & a_{n-j+2,n-j+2}    & \cdots & 0 \\
\vdots      & \      &  \vdots       & \vdots & \vdots    & \ddots & \vdots \\
a_{n,1}     & \cdots & a_{n,n-j}     & 0      & 0 & \cdots & a_{nn}
\end{pmatrix} \in \mathfrak{gl}(n,\H)
\right\}.
\end{equation*}
\end{cor}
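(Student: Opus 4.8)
The plan is to identify the block upper-triangular shape of $\mathfrak{hol}(\nabla)$ as a direct consequence of Theorem~\ref{main} (more precisely, of Lemma~\ref{Lem:technical}) together with the block structure of the covariant derivatives recorded in Lemma~\ref{lem:nablae1}, and then invoke the Alekseevski\u{i} version of the Ambrose--Singer theorem (Theorem~\ref{Thm:Alek}) to close the argument. Without loss of generality, after relabelling indices, I would assume that the $j$ trivial $\mathfrak{f}_i$ summands correspond to $i = m-j+1, \dots, m$, so that $\mathfrak{f}_{m-j+1} = \cdots = \mathfrak{f}_m = 0$. Then the tangent space at the identity decomposes as
\[
\ell\mathfrak{u}(1)\oplus\mathfrak{g} = V \oplus \bigoplus_{i=m-j+1}^{m} \mathfrak{h}_i, \qquad V := \bigoplus_{i=1}^{m-j}\mathfrak{h}_i \oplus \bigoplus_{i=1}^{m-j}\mathfrak{f}_i,
\]
and, ordering the quaternionic basis so that the first $n-j$ quaternionic coordinates span $V$ and the last $j$ span $\mathfrak{h}_{m-j+1},\dots,\mathfrak{h}_m$ respectively, the claimed matrix set is exactly $\{A \in \mathfrak{gl}(n,\mathbb{H}) : A\,\mathfrak{h}_i \subseteq V \text{ for each } i>m-j, \text{ and } A\,\mathfrak{h}_{i'} \cap \mathfrak{h}_i = 0 \text{ for } i\neq i' \text{ both} >m-j\}$. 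So it suffices to show that every element of $\mathfrak{hol}(\nabla)$ preserves each subspace $\mathfrak{h}_i$ with $i > m-j$, equivalently that each such $\mathfrak{h}_i$ is $\nabla$-parallel and that the $\mathfrak{h}_i$ ($i>m-j$) are mutually ``non-mixed'' by the holonomy; this is precisely the block-triangular picture.

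The key steps, in order, are as follows. \textbf{Step 1.} By Lemma~\ref{lem:nablae1}, for each $i > m-j$ with $\mathfrak{f}_i = 0$ we have $\nabla_X e_1^i = -X$ if $X \in \mathfrak{h}_i$ and $\nabla_X e_1^i = 0$ otherwise; since $\mathfrak{h}_i = \langle e_1^i\rangle_{\mathbb H}$ and the connection is quaternionic, this forces $\nabla_X(\mathfrak{h}_i) \subseteq \mathfrak{h}_i$ for all $X$, i.e. $H_i$ is a $\nabla$-parallel subbundle, exactly as in the proof of Lemma~\ref{Lem:technical}. \textbf{Step 2.} A parallel subbundle is preserved by the holonomy representation, so every $h \in \mathrm{Hol}(\nabla)$ satisfies $h(\mathfrak{h}_i) = \mathfrak{h}_i$ for each $i > m-j$; differentiating, every element of $\mathfrak{hol}(\nabla)$ maps $\mathfrak{h}_i$ into itself. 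This already yields that the holonomy algebra is contained in the subalgebra of $\mathfrak{gl}(n,\mathbb H)$ stabilising the flag/decomposition $V \oplus \mathfrak{h}_{m-j+1}\oplus\cdots\oplus\mathfrak{h}_m$, hence in the displayed block set: the rows indexed by the $\mathfrak{h}_i$-coordinates ($i>m-j$) can have nonzero entries only in the $V$-columns and on their own diagonal, and the $V$-rows have zero entries in all $\mathfrak{h}_i$-columns. \textbf{Step 3.} To be safe one may alternatively re-derive this directly from Theorem~\ref{Thm:Alek}: the curvature operators $R(x,y)$ annihilate the $e_1^i$ for $i>m-j$ (since $\nabla e_1^i$ is ``constant'' on the complementary directions and equal to $-\mathrm{Id}$ on $\mathfrak{h}_i$, one checks $R(x,y)e_1^i = 0$), hence send $\mathfrak{h}_i$ into $V$; and the bracket of such an operator with any $\nabla_z$ again preserves this property because $\nabla_z$ itself preserves the decomposition by Step 1. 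Since the set of matrices preserving the decomposition is a Lie subalgebra closed under the required operations, the smallest such subalgebra generated by the curvature operators lies inside it.

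I do not expect any serious obstacle here: the corollary is essentially a bookkeeping translation of Lemma~\ref{Lem:technical} applied simultaneously to all $j$ trivial summands, combined with the standard fact that parallel subbundles constrain the holonomy algebra. The only mildly delicate point is the ordering/relabelling of the quaternionic coordinates so that the matrix in the statement literally matches the stabiliser of the decomposition $V\oplus\bigoplus_{i>m-j}\mathfrak{h}_i$ — in particular, noting that the ``$V$-block'' $a_{11},\dots,a_{n-j,n-j}$ is completely unconstrained by this argument (the $\mathfrak{f}_i$ with $i\le m-j$ need not be trivial, so no further reduction inside $V$ is claimed), while the last $j$ quaternionic coordinates each contribute a single diagonal entry and a full row over $V$. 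Writing this out carefully, and remarking that the inclusion may well be strict (the actual holonomy can be much smaller, as the $\mathbb{T}^2\times\mathrm{Sp}(2)$ computation shows), completes the proof.
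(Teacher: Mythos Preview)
Your approach is exactly the paper's: the $j$ quaternionic lines $\mathfrak{h}_i$ with $\mathfrak{f}_i=0$ are $\nabla$-parallel (Lemma~\ref{Lem:technical}), hence invariant under the holonomy, and after ordering the basis so that these span the last $j$ coordinates one lands in the displayed block form via Theorem~\ref{Thm:Alek}. One harmless slip in your setup paragraph: the set-description $\{A:A\,\mathfrak{h}_i\subseteq V,\dots\}$ is not the right characterisation of the displayed matrix set---the correct condition is simply $A\,\mathfrak{h}_i\subseteq\mathfrak{h}_i$ for each $i>m-j$ (the diagonal entries $a_{n-j+s,\,n-j+s}$ need not vanish, and $V$ is \emph{not} asserted to be invariant, as the full first $n-j$ columns show); your Step~2 states this correctly, so the argument goes through.
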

\begin{proof}
From Lemma \ref{Lem:technical} we know that the $j$ copies of $\h_i\cong \mathbb{H}$ are preserved by $\nabla$. The result now follows from Theorem \ref{Thm:Alek},  where the matrix identification is obtained by choosing a basis of $\mathfrak{g}$ with $\mathfrak{h}_i$ spanning the last $j$ vectors.
\end{proof} 

\begin{rmk}
Note that Theorem \ref{main} can be slightly generalised to the case of $G$ semisimple. Let $M=\mathbb{T}^\ell \times G$ with $G$ semisimple and \[G \neq \prod_{i} \mathrm{SU}(2k_i+1).\] Then for any Joyce hypercomplex structure on $M$ the holonomy of the associated Obata connection is strictly contained in $\mathrm{GL}(n,\mathbb{H})$.
\end{rmk}

\subsection{The holonomy of \texorpdfstring{$\mathbb{T}^2 \times \mathrm{Sp}(2)$}{T2xSp2}}\label{Subsec:Sp2}

In this subsection, we compute explicitly the holonomy algebra of the Obata connection on $\mathbb{T}^2\times \mathrm{Sp}(2)$. This is the lowest possible dimensional case covered in Theorem \ref{main}.   

\medskip 
We begin by writing down an explicit description of the Joyce decomposition of $\mathfrak{sp}(2)$. The quaternionic matrices generating $\mathfrak{sp}(2)$ are given by:
\begin{gather*}
e^1_2=\begin{pmatrix}
i & 0 \\
0 & 0
\end{pmatrix}, \quad
e^1_3=\begin{pmatrix}
j & 0 \\
0 & 0
\end{pmatrix}, \quad
e^1_4=\begin{pmatrix}
k & 0 \\
0 & 0
\end{pmatrix}, \\
e^2_2=\begin{pmatrix}
0 & 0 \\
0 & i
\end{pmatrix}, \quad
e^2_3=\begin{pmatrix}
0 & 0 \\
0 & j
\end{pmatrix}, \quad
e^2_{4}=\begin{pmatrix}
0 & 0 \\
0 & k
\end{pmatrix}, \\
f^1_1=\begin{pmatrix}
0 & 1 \\
-1 & 0
\end{pmatrix}, \quad
f^1_2=\begin{pmatrix}
0 & i \\
i & 0
\end{pmatrix}, \quad
f^1_3=\begin{pmatrix}
0 & j \\
j & 0
\end{pmatrix}, \quad
f^1_4=\begin{pmatrix}
0 & k \\
k & 0
\end{pmatrix}. 
\end{gather*}
We denote by $\phi^i_j,\psi^k_l$ the associated dual basis i.e. $\phi^i_j(e_s^r)=\delta_{ir}\delta_{js}$, $\psi^k_l(f^r_s)=\delta_{kr}\delta_{ls}$. The structure equations can be computed as follows:
\begin{equation}
\begin{split}\label{strconst}
    d\phi^1_2 &= -2 \phi^1_3\wedge \phi^1_4 -2 \sigma_1\\
    d\phi^1_3 &= -2 \phi^1_4\wedge \phi^1_2 -2 \sigma_2\\
    d\phi^1_4 &= -2 \phi^1_2\wedge \phi^1_3 -2 \sigma_3\\
    d\phi^2_2 &= -2 \phi^2_3\wedge \phi^2_4 +2 \bar\sigma_1\\
    d\phi^2_3 &= -2 \phi^2_4\wedge \phi^2_2 +2 \bar \sigma_2\\
    d\phi^2_4 &= -2 \phi^2_2\wedge \phi^2_3 +2 \bar \sigma_3\\
    d\psi^1_1 &=\ \  \phi^1_2\wedge \psi_2^1 + \phi^1_3\wedge \psi^1_3+\phi^1_4\wedge \psi^1_4-\phi^2_2 \w \psi_2^1- \phi^2_3 \w \psi^1_3-\phi^2_4 \w \psi^1_4\\
    d\psi_2^1 &= -\phi^1_2\wedge \psi^1_1 - \phi^1_3\wedge \psi^1_4+\phi^1_4\wedge \psi^1_3+ \phi^2_2 \w \psi^1_1 + \phi^2_4 \w \psi^1_3-\phi^2_3 \w \psi^1_4 \\
    d\psi^1_3 &=\ \  \phi^1_2\wedge \psi^1_4 - \phi^1_3\wedge \psi^1_1-\phi^1_4\wedge \psi_2^1+ \phi^2_3 \w \psi^1_1 - \phi^2_4 \w \psi_2^1 +\phi^2_2 \w \psi^1_4 \\
    d\psi^1_4 &= -\phi^1_2\wedge \psi^1_3 + \phi^1_3\wedge \psi_2^1-\phi^1_4\wedge \psi^1_1+\phi^2_4 \w \psi^1_1  + \phi^2_3 \w \psi_2^1 -\phi^2_2 \w \psi^1_3  \\
\end{split}
\end{equation}
where
\begin{gather*}
    \sigma_1:=\psi^1_1\wedge \psi^1_2+\psi^1_3\wedge \psi^1_4, \quad \sigma_2:=\psi^1_1\wedge\psi^1_3+\psi^1_4\wedge \psi^1_2, \quad \sigma_3:=\psi^1_1\wedge\psi^1_4+\psi^1_2\wedge \psi^1_3, \\
    \bar{\sigma}_1:=\psi^1_1\wedge\psi^1_2-\psi^1_3\wedge \psi^1_4, \quad \bar{\sigma}_2:=\psi^1_1\wedge\psi^1_3-\psi^1_4\wedge \psi^1_2, \quad \bar{\sigma}_3:=\psi^1_1\wedge\psi^1_4-\psi^1_2\wedge \psi^1_3.
\end{gather*}
Fix any basis $\{e_1^1, e_1^2\}$ of $2\mathfrak{u}(1)\cong \R^2$. The Joyce decomposition is then given by
\begin{equation}\label{eqn:Joycesp2}
2\mathfrak{u}(1)  = \langle e_1^1, e_1^2 \rangle, \quad 
\mathfrak{d}_1  = \langle e_2^1, e_3^1, e_4^1 \rangle,\quad
\mathfrak{d}_2  = \langle e_2^2, e_3^2, e_{4}^2 \rangle,\quad
\mathfrak{f}_1  = \langle f_1^1, f_2^1, f_3^1, f_4^1 \rangle.
\end{equation}
Let $(I,J,K)$ be the Joyce hypercomplex structure corresponding to the decomposition \eqref{eqn:Joycesp2}. 
We may rewrite the above as
\begin{align} \label{eqn:decompos}
2\mathfrak{u}(1)\oplus \mathfrak{sp}(2) = 
\left( \langle e_1^1\rangle \oplus \di_1\right) \oplus \mathfrak{f}_1\oplus  \left(\langle e_1^2 \rangle \oplus \di_2\right) = \mathfrak{h}_1 \oplus \mathfrak{f}_1 \oplus \mathfrak{h}_2,
\end{align}
and by Theorem \ref{main} the left-invariant subbundle generated by $\h_2 $ is Obata parallel.

Let us now determine the holonomy algebra of the Obata connection \(\nabla\) associated to the hypercomplex structure \((I, J, K)\). Recall that by remark \ref{rmk:uniqueness}, for any other Joyce hypercomplex structure $(\tilde I, \tilde J, \tilde K)$ with Obata connection $\tilde \nabla$ one has that $\mathfrak{hol}(\nabla)\cong\mathfrak{hol}(\tilde \nabla)$. We should point out that the classification of irreducible holonomies of torsion-free affine connections is known, see \cite{MS}. This is for instance used in \cite{Sol} to determine the holonomy of the Obata connection on $\mathrm{SU}(3)$. However, in our case we cannot appeal to this classification since the holonomy is not irreducible; thus, to determine the holonomy algebra we have to compute it directly. \par
With respect to the global coframe $\{\phi^i_j,\psi^k_l\}$, we can identify $\nabla$ with the connection $1$-form:
\begin{equation} \label{eqn:theta}
\Theta = \scalebox{0.8}{ 
$\displaystyle
\left(
\begin{array}{cccc|cccc|cccc}
-\phi^1_1 & \phi^1_2 & \phi^1_3 & \phi^1_4 & \psi^1_1 & \psi^1_2 & \psi^1_3 & \psi^1_4 & 0 & 0 & 0 & 0 \\
-\phi^1_2 & -\phi^1_1 & -\phi^1_4 & \phi^1_3 & -\psi^1_2 & \psi^1_1 & -\psi^1_4 & \psi^1_3 & 0 & 0 & 0 & 0 \\
-\phi^1_3 & \phi^1_4 & -\phi^1_1 & -\phi^1_2 & -\psi^1_3 & \psi^1_4 & \psi^1_1 & -\psi^1_2 & 0 & 0 & 0 & 0 \\
-\phi^1_4 & -\phi^1_3 & \phi^1_2 & -\phi^1_1 & -\psi^1_4 & -\psi^1_3 & \psi^1_2 & \psi^1_1 & 0 & 0 & 0 & 0 \\ \hline
-\psi^1_1 & \psi^1_2 & \psi^1_3 & \psi^1_4 & -\phi^1_1 & \phi^2_2 & \phi^2_3 & \phi^2_4 & 0 & 0 & 0 & 0 \\
-\psi^1_2 & -\psi^1_1 & -\psi^1_4 & \psi^1_3 & -\phi^2_2 & -\phi^1_1 & -\phi^2_4 & \phi^2_3 & 0 & 0 & 0 & 0 \\
-\psi^1_3 & \psi^1_4 & -\psi^1_1 & -\psi^1_2 & -\phi^2_3 & \phi^2_4 & -\phi^1_1 & -\phi^2_2 & 0 & 0 & 0 & 0 \\
-\psi^1_4 & -\psi^1_3 & \psi^1_2 & -\psi^1_1 & -\phi^2_4 & -\phi^2_3 & \phi^2_2 & -\phi^1_1 & 0 & 0 & 0 & 0 \\ \hline
0 & 0 & 0 & 0 & -3\psi^1_1 & \psi^1_2 & \psi^1_3 & \psi^1_4 & -\phi^2_1 & \phi^2_2 & \phi^2_3 & \phi^2_4 \\
0 & 0 & 0 & 0 & -\psi^1_2 & -3 \psi^1_1 & -\psi^1_4 & \psi^1_3 & -\phi^2_2 & -\phi^2_1 & -\phi^2_4 & \phi^2_3 \\
0 & 0 & 0 & 0 & -\psi^1_3 & \psi^1_4 & -3\psi^1_1 & -\psi^1_2 & -\phi^2_3 & \phi^2_4 & -\phi^2_1 & -\phi^2_2 \\
0 & 0 & 0 & 0 & -\psi^1_4 & -\psi^1_3 & \psi^1_2 & -3\psi^1_1 & -\phi^2_4 & -\phi^2_3 & \phi^2_2 & -\phi^2_1
\end{array}
\right)
$}
\end{equation}
or equivalently, as the $\mathfrak{gl}(3,\H)$-valued $1$-form:
\[
\Theta =
\begin{pmatrix}
    -\phi^1_1+i\phi^1_2+j\phi^1_3+k\phi^1_4 & +\psi^1_1+i\psi^1_2+j\psi^1_3+k\psi^1_4 & 0 \\
    -\psi^1_1+i\psi^1_2+j\psi^1_3+k\psi^1_4 & -\phi^1_1+i\phi^2_2+j\phi^2_3+k\phi^2_4 & 0 \\
    0 & -3\psi^1_1+i\psi^1_2+j\psi^1_3+k\psi^1_4 & -\phi^2_1+i\phi^2_2+j\phi^2_3+k\phi^2_4
\end{pmatrix}.
\]
We compute the curvature form $F_{\Theta}:=d\Theta+ \Theta \w \Theta$ as
\begin{equation}
F_\Theta =
\begin{pmatrix}
    0 & 0 & 0 \\
    0 & 0 & 0 \\
    F_1 & F_2 & F_3
\end{pmatrix},\label{equ: curvature form}
\end{equation}
where
\begin{gather*}
    F_1 = -2i \bar \sigma_1 -2j\bar \sigma_2 -2k\bar \sigma_3,\\
    F_3 = +2i \bar \sigma_1 +2j\bar \sigma_2 +2k\bar \sigma_3,
\end{gather*}
and 
\begin{align*}
F_2 ={}& -3\big(\phi^1_1 \wedge \psi^1_1+ \phi^1_2 \wedge \psi^1_2 + \phi^1_3 \wedge \psi^1_3 + \phi^1_4 \wedge \psi^1_4  \\
      &\quad \quad   - \phi^2_1 \w \psi^1_1 -\phi^2_2 \w  \psi^1_2 - \phi^2_3 \w \psi^1_3 - \phi^2_4 \w \psi^1_4 \big) \\
    & + i\big(\phi^1_1 \wedge \psi^1_2 + \phi^2_3 \wedge \psi^1_4 -\phi^2_4 \w \psi^1_3 
        - \phi^1_2 \wedge \psi^1_1 - \phi^1_3 \wedge \psi^1_4 \\
    &\quad \quad + \phi^1_4 \wedge \psi^1_3 + \phi^2_2 \w \psi^1_1 - \phi^2_1 \w \psi^1_2 \big) \\
    & + j\big(\phi^1_2 \wedge \psi^1_4 - \phi^1_3 \wedge \psi^1_1 - \phi^1_4 \wedge \psi^1_2 
        + \phi^2_3 \w \psi^1_1 + \phi^2_4 \w \psi^1_2 \\
    &\quad \quad + \phi^1_1 \wedge \psi^1_3 -\phi^2_2 \w  \psi^1_4 - \phi^2_1 \w \psi^1_3 \big) \\
    & + k\big(-\phi^2_3 \w \psi^1_2 + \phi^1_1 \wedge \psi^1_4 + \phi^2_2 \w \psi^1_3 
        - \phi^1_2 \wedge \psi^1_3 + \phi^1_3 \wedge \psi^1_2 \\
    &\quad \quad - \phi^1_4 \wedge \psi^1_1 + \phi^2_4 \w \psi^1_1 - \phi^2_1 \w \psi^1_4 \big)
\end{align*}

Therefore, we see that $F_\Theta(X,Y)$ has $7$ generators, which, expressed as tensors, are given by 
\begin{equation}
\begin{split}\label{equ: curvature endomorphisms}
    \tau_1&=(\phi^1_2 - \phi^2_2) \otimes e^2_1+(\phi^2_1-\phi^1_1) \otimes e^2_2+( \phi^2_4-\phi^1_4) \otimes e^2_3+(\phi^1_3 - \phi^2_3) \otimes e^2_4, \\
    \tau_2&=(\phi^1_3 - \phi^2_3) \otimes e^2_1+(\phi^1_4 - \phi^2_4) \otimes e^2_2+(\phi^2_1-\phi^1_1) \otimes e^2_3+( \phi^2_2-\phi^1_2 ) \otimes e^2_4, \\
    \tau_3&=(\phi^1_4 - \phi^2_4) \otimes e^2_1+(\phi^2_3-\phi^1_3 ) \otimes e^2_2+(\phi^1_2 - \phi^2_2) \otimes e^2_3+(\phi^2_1-\phi^1_1) \otimes e^2_4, \\
    \tau_4&=\psi^1_1 \otimes e^2_1+\psi^1_2 \otimes e^2_2+\psi^1_3 \otimes e^2_3+\psi^1_4 \otimes e^2_4, \\
    \tau_5&=\psi^1_2 \otimes e^2_1-\psi^1_1 \otimes e^2_2-\psi^1_4 \otimes e^2_3+\psi^1_3 \otimes e^2_4,\\
    \tau_6&=\psi^1_3 \otimes e^2_1+\psi^1_4 \otimes e^2_2-\psi^1_1 \otimes e^2_3-\psi^1_2 \otimes e^2_4,\\
    \tau_7&=\psi^1_4 \otimes e^2_1-\psi^1_3 \otimes e^2_2+\psi^1_2 \otimes e^2_3-\psi^1_1 \otimes e^2_4.
\end{split}
\end{equation}
By the Ambrose-Singer theorem (see Theorem \ref{Thm:Alek}), in order to compute the full holonomy algebra of the Obata connection we also need to compute $(\nabla_XF_\Theta)(Y,Z)$ and its higher derivatives. 
First note that we can write 
$$F_\Theta=\alpha_i \otimes \tau_i,$$ 
where $\alpha_i$ correspond to the $2$-forms appearing in $F_1, F_2, F_3$. If we now compute 
$$(\nabla_X F_\Theta)(Y,Z)=(\nabla_X\alpha_i)(Y,Z)  \tau_i+\alpha_i(Y,Z)\nabla_X \tau_i,$$ then we see that we recover the endomorphisms in (\ref{equ: curvature endomorphisms}) and obtain possibly new endomorphisms from the terms of the form $\alpha_i(Y,Z) \nabla_X\tau_i$. Thus, it only suffices to consider those.\par
On the other hand, from Theorem \ref{main} we see that $\nabla_X\mathfrak{h}_2$ lies in $\mathfrak{h}_2$. It follows that the holonomy algebra has to be generated by $\H$-matrices of the form
\begin{equation}
\begin{pmatrix}\label{equ: general matrices}
  0 & 0 & 0\\
  0 & 0 & 0\\
  q_1 & q_2 & q_3
\end{pmatrix}, \qquad q_i\in \H\,.
\end{equation}
This is a Lie algebra of real dimension 12. The crucial point is to see if the holonomy algebra is a strict sub-algebra of the latter. To this end we begin by determining the span of $(\nabla_XF_\Theta)(Y,Z)$.
\begin{lem}\label{lem: new generators}
We have the following:
\begin{equation}
\begin{split}
    \nu_1:=\nabla_{e^1_1} \tau_1&=+\phi^1_2\otimes e^2_1 - \phi^1_1 \otimes e^2_2 -\phi^1_4\otimes e^2_3+\phi^1_3 \otimes e^2_4, \\
    \nu_2:=\nabla_{e^1_2} \tau_1&=+\phi^1_1\otimes e^2_1 + \phi^1_2 \otimes e^2_2 +\phi^1_3\otimes e^2_3+\phi^1_4 \otimes e^2_4,\\
    \nu_3:=\nabla_{e^1_3} \tau_1&=-\phi^1_4\otimes e^2_1 + \phi^1_3 \otimes e^2_2 -\phi^1_2\otimes e^2_3+\phi^1_1 \otimes e^2_4,\\
    \nu_4:=\nabla_{e^1_{4}} \tau_1&=+\phi^1_3\otimes e^2_1 + \phi^1_4 \otimes e^2_2 -\phi^1_1\otimes e^2_3-\phi^1_2 \otimes e^2_4.\\
\end{split}
\end{equation}
\end{lem}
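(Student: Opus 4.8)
The plan is to compute these covariant derivatives directly from the connection $1$-form $\Theta$ of $\eqref{eqn:theta}$. In the global frame $(e^1_1,e^1_2,e^1_3,e^1_4,f^1_1,\dots,f^1_4,e^2_1,\dots,e^2_4)$ with dual coframe $(\phi^1_1,\dots,\phi^1_4,\psi^1_1,\dots,\psi^1_4,\phi^2_1,\dots,\phi^2_4)$, writing $\Theta=(\Theta^\alpha_{\ \beta})$ one has $\nabla_X E_\beta=\sum_\alpha\Theta^\alpha_{\ \beta}(X)\,E_\alpha$ on frame fields and dually $\nabla_X\phi^\alpha=-\sum_\beta\Theta^\alpha_{\ \beta}(X)\,\phi^\beta$ on the coframe. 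Since $\tau_1=\sum_{b=1}^4\beta_b\otimes e^2_b$ with $\beta_1=\phi^1_2-\phi^2_2$, $\beta_2=\phi^2_1-\phi^1_1$, $\beta_3=\phi^2_4-\phi^1_4$, $\beta_4=\phi^1_3-\phi^2_3$, the Leibniz rule gives $\nabla_X\tau_1=\sum_b\big((\nabla_X\beta_b)\otimes e^2_b+\beta_b\otimes\nabla_X e^2_b\big)$, so everything reduces to evaluating this for $X=e^1_a$, $a=1,\dots,4$.

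The key simplification is that each $e^1_a\in\mathfrak{h}_1$ annihilates every $\psi^1_j$ and every $\phi^2_j$, while $\phi^1_b(e^1_a)=\delta_{ab}$. Inspecting $\eqref{eqn:theta}$, the rows and columns indexed by $e^2_1,\dots,e^2_4$ contain only the forms $\psi^1_j$ and $\phi^2_j$; hence $\nabla_{e^1_a}e^2_b=0$ and $\nabla_{e^1_a}\phi^2_j=0$ for all $a,b,j$. This removes the second Leibniz term and the $\phi^2$-part of each $\beta_b$, leaving $\nabla_{e^1_a}\tau_1=(\nabla_{e^1_a}\phi^1_2)\otimes e^2_1-(\nabla_{e^1_a}\phi^1_1)\otimes e^2_2-(\nabla_{e^1_a}\phi^1_4)\otimes e^2_3+(\nabla_{e^1_a}\phi^1_3)\otimes e^2_4$. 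Moreover the top-left $4\times4$ block of $\Theta$, which controls the $\nabla\phi^1_j$, involves only the $\phi^1_j$, so $\nabla_{e^1_a}$ preserves $\langle\phi^1_1,\dots,\phi^1_4\rangle$.

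It remains to evaluate that $4\times4$ block at $e^1_a$. For $a=1$ the block equals $-\phi^1_1\,\mathrm{Id}$ plus terms in $\phi^1_2,\phi^1_3,\phi^1_4$ and $\psi^1_j$, all vanishing on $e^1_1$, so $\nabla_{e^1_1}$ acts as $+\mathrm{Id}$ on $\langle\phi^1_1,\dots,\phi^1_4\rangle$ and $\nu_1=\nabla_{e^1_1}\tau_1=\phi^1_2\otimes e^2_1-\phi^1_1\otimes e^2_2-\phi^1_4\otimes e^2_3+\phi^1_3\otimes e^2_4$ follows immediately. For $a=2,3,4$ the same block evaluated at $e^1_a$ is a signed permutation of the $\phi^1_j$ (reflecting the quaternionic relations on $\mathfrak{h}_1$); reading off the corresponding $\nabla_{e^1_a}\phi^1_j$ and substituting into the displayed formula gives $\nu_2,\nu_3,\nu_4$ exactly as stated. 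There is no genuine obstacle here: the computation is mechanical, the only care being with the index and sign conventions relating $\Theta$, the coframe, and the Leibniz rule.
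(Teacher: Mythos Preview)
Your proof is correct and follows exactly the approach indicated in the paper, which simply states that the result ``follows by a direct computation using the connection form \eqref{eqn:theta}''. You have supplied the details of that computation, correctly identifying that the last four rows and columns of $\Theta$ involve only $\psi^1_j$ and $\phi^2_j$ (so that $\nabla_{e^1_a}e^2_b=0$ and $\nabla_{e^1_a}\phi^2_j=0$), and then reading off the action of the top-left $4\times4$ block on the $\phi^1_j$.
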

\begin{proof}
This follows by a direct computation using the connection form \eqref{eqn:theta}.  
\end{proof}
\begin{thm}\label{thm: sp2T2}
    The holonomy algebra $\mathfrak{hol}(\nabla)$ is the $11$-dimensional Lie algebra:
    \begin{equation}\label{equ: full hol algebra}
    \mathfrak{hol}(\nabla)\cong \Bigg\{ \begin{pmatrix}
        0 & 0 & 0 \\
        0 & 0 & 0 \\
        q_1 & q_2 & p \\ 
    \end{pmatrix}\in \mathfrak{gl}(3,\H) \ | \ q_1,q_2 \in \H, \ p \in \operatorname{Im}(\H)\Bigg\}.
    \end{equation} 
\end{thm}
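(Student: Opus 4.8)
The strategy is to apply the Ambrose--Singer theorem in the form of Theorem~\ref{Thm:Alek}: the holonomy algebra $\mathfrak{hol}(\nabla)$ is the smallest subalgebra of $\mathfrak{gl}(3,\H)$ containing the curvature endomorphisms $\{\tau_1,\dots,\tau_7\}$ of \eqref{equ: curvature endomorphisms} and closed under taking commutators with the operators $\nabla_z$ for $z\in 2\mathfrak{u}(1)\oplus\mathfrak{sp}(2)$. We already know from Theorem~\ref{main} (equivalently, from the block structure of $\Theta$ and $F_\Theta$) that everything sits inside the $12$-dimensional Lie algebra of matrices of the form \eqref{equ: general matrices}; call this Lie algebra $\mathfrak{a}$. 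Note that $\mathfrak{a}$ is abelian (the product of any two such matrices is zero), so the only way to generate new elements is by repeatedly applying the derivations $\nabla_z$, which act on $\mathfrak{a}$ by $\xi\mapsto[\nabla_z,\xi]=\nabla_z\circ\xi-\xi\circ\nabla_z$.

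**Key steps.** First I would fix the basis $(e^1_1,e^1_2,e^1_3,e^1_4,f^1_1,f^1_2,f^1_3,f^1_4,e^2_1,e^2_2,e^2_3,e^2_4)$ of $2\mathfrak{u}(1)\oplus\mathfrak{sp}(2)$ and observe that, since every element of $\mathfrak{a}$ has image inside $\mathfrak{h}_2=\langle e^2_1,e^2_2,e^2_3,e^2_4\rangle$ and kernel containing $\mathfrak{h}_2$, the composition $\xi\circ\nabla_z$ only depends on $\nabla_z|_{\mathfrak h_2}$, while $\nabla_z\circ\xi$ only depends on $\nabla_z|_{\mathfrak h_2\to\mathfrak h_2}$ as well — so the relevant data is how $\nabla_z$ acts within $\mathfrak{h}_2$ and how it maps $\mathfrak{h}_1\oplus\mathfrak{f}_1$ to $\mathfrak{h}_2$. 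From Lemma~\ref{lem:nablae1} we have $\nabla_X e^2_1=-X$ for $X\in\mathfrak h_2$ and $0$ otherwise; combined with the connection form \eqref{eqn:theta} this pins down all the $\nabla_z$ needed. Second, I would compute $[\nabla_z,\tau_a]$ for the generators; the $\tau_4,\dots,\tau_7$ involve the $\psi^1_l$ (i.e. $\mathfrak{f}_1$-directions) in their form-part, and commuting with $\nabla_{e^1_j}$ produces precisely the new endomorphisms $\nu_1,\nu_2,\nu_3,\nu_4$ of Lemma~\ref{lem: new generators} (up to the identification of the curvature's form-coefficients); one also checks that commuting with $\nabla_{e^2_j}$ and with $\nabla_f$ for $f\in\mathfrak f_1$ produces nothing outside the span already obtained. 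Third, I would assemble the list $\{\tau_1,\dots,\tau_7,\nu_1,\nu_2,\nu_3,\nu_4\}$, write each as a matrix in \eqref{equ: general matrices}, and show by linear algebra that their $\R$-span is exactly the $11$-dimensional space in \eqref{equ: full hol algebra}: concretely, the $\tau_4,\dots,\tau_7$ together with the $\nu_1,\dots,\nu_4$ span the full $\H^2$ worth of freedom in $(q_1,q_2)$ (that is $8$ real dimensions — one checks the $8\times 8$ coefficient matrix is invertible), while the $\tau_1,\tau_2,\tau_3$ contribute exactly the imaginary quaternion $p\in\operatorname{Im}(\H)$ in the $(3,3)$-slot and nothing more (the real part of $p$ never appears, consistent with the fact that $e^2_1=-\,\nabla$-gradient direction cannot be rotated into itself). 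Fourth, I would verify closure: since $\mathfrak a$ is abelian there are no new commutators among the $\tau$'s and $\nu$'s, and one must only check that commuting the $\nu_i$ again with the $\nabla_z$ stays inside the $11$-dimensional span — this is a finite check, and the expected outcome is that no further generators appear, so the span is already a $\nabla_z$-stable subalgebra and hence equals $\mathfrak{hol}(\nabla)$.

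**Main obstacle.** The conceptual content is light — it is Ambrose--Singer plus linear algebra — so the real difficulty is bookkeeping: correctly computing $[\nabla_z,\tau_a]$ and $[\nabla_z,\nu_i]$ for all relevant $z$ without sign errors, and then certifying that the span stabilises at dimension $11$ rather than growing to $12$. The subtle point, and the one deserving the most care, is the exclusion of the real part of $p$ in the $(3,3)$-block: one must argue that no iterated bracket ever produces the identity-like endomorphism $\phi^2_1\otimes e^2_1+\dots$ on $\mathfrak h_2$ — which is exactly the obstruction forcing $p\in\operatorname{Im}(\H)$ rather than $p\in\H$ — and this is where the structure $\nabla_X e^2_1=-X$ (so that the "$e^2_1\to e^2_1$" direction is already saturated by $\nabla\mathcal E=\mathrm{Id}$ and cannot reappear in the curvature-generated part) must be used decisively.
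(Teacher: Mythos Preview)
Your approach is correct and is explicitly acknowledged in the paper as a valid alternative: the paper writes ``One can show this by either directly computing $\nabla^2 F_{\Theta}$, or one can instead appeal to Theorem~\ref{main2} below\ldots''. The difference is in how the upper bound of $11$ is obtained.

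You propose to verify closure directly: compute $[\nabla_z,\nu_i]$ for all $z$ and check that nothing new appears. The paper instead invokes the general result Theorem~\ref{main2} (proved independently in Section~\ref{Sec:restrhol}): since $\mathbb T^2\times\mathrm{Sp}(2)$ lies in the list \eqref{list}, its Obata-Ricci tensor vanishes and hence $\mathfrak{hol}(\nabla)\subset\mathfrak{sl}(3,\H)$. The intersection of the $12$-dimensional algebra \eqref{equ: general matrices} with $\mathfrak{sl}(3,\H)$ is precisely the $11$-dimensional algebra \eqref{equ: full hol algebra}, so $\operatorname{Re}(p)=0$ is forced without any further bracket computations. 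This is cleaner and explains \emph{why} the real part of $p$ is excluded, whereas your direct route would establish the same fact but as a computational coincidence. Your heuristic about $\nabla\mathcal E=\mathrm{Id}$ ``saturating'' the $e^2_1\to e^2_1$ direction is suggestive but not a proof; the $\mathfrak{sl}$-argument is what makes it rigorous.

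Two minor corrections to your bookkeeping: first, the $\tau_1,\tau_2,\tau_3$ involve $\phi^1_j-\phi^2_j$, so they contribute to \emph{both} the $q_1$- and $p$-blocks, not only to $p$; second, in the paper the $\nu_i$ are obtained as $\nabla_{e^1_i}\tau_1$ (covariant derivatives of $\tau_1$, not of the $\tau_4,\dots,\tau_7$ as you suggest). Neither affects the validity of your strategy.
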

\begin{proof}
From Lemma \ref{lem: new generators}, the endomorphisms $\nu_1,\dots,\nu_4$ are linearly independent. It is also easy to see that they are linearly independent of $\tau_1,\dots,\tau_7$ as well.
Thus, this yields a total of 11 generators for the holonomy algebra:
\[
\langle \tau_1,\dots,\tau_7, \nu_1,\dots \nu_4\rangle \subset \mathfrak{hol}(\nabla).
\]
Since $11=\dim (\langle \tau_1,\dots,\tau_7, \nu_1,\dots \nu_4\rangle) \le \dim(\mathfrak{hol}(\nabla)) \le 12$, to prove the result we only need to show that the holonomy algebra cannot be $12$-dimensional. One can show this by either directly computing $\nabla^2 F_{\Theta}$, or one can instead appeal to Theorem \ref{main2} below which asserts that the holonomy algebra of $\nabla$ is necessarily a sub-algebra of $\mathfrak{sl}(3,\H)$ i.e. $q_3$ in (\ref{equ: general matrices}) has to be purely imaginary. This concludes the proof.
\end{proof}
It is worth pointing out that unlike for metric connections, if there is a non-trivial subbundle preserved by the Obata connection then this does not imply the existence of a complementary parallel subbundle. This can indeed be seen in the above example.

\subsection{The case of \texorpdfstring{$\SU(2n+1)$}{SU(2n+1)}}
In this section, we focus on $\SU(2n+1)$ for $n > 1$, which is the only case excluded by Theorem \ref{main}. This case is particularly interesting since unlike the situations covered by Theorem~\ref{main}, where the holonomy reduction holds regardless of the chosen Joyce hypercomplex structure, in the $\SU(2n+1)$ case the choice of hypercomplex structure plays a crucial role. We elaborate below. \par
The Joyce decomposition of $\mathfrak{su}(2n+1)$ can be constructed iteratively via the relation
\[
\mathfrak{su}(2n+1) = \mathfrak{su}(2n-1) \oplus \mathfrak{u}(1) \oplus \mathfrak{d}_1 \oplus \mathfrak{f}_1.
\]
In particular, for any $k \leq n$, the Lie algebra $\mathfrak{su}(2k+1)$ is embedded as a subalgebra of $\mathfrak{su}(2n+1)$. This leads to the chain of inclusions
\[
\mathfrak{su}(3) \subset \mathfrak{su}(5) \subset \dots \subset \mathfrak{su}(2n+1).
\]
Note that, the $\mathfrak{su}(2k+1)$ is not in general invariant under the action of $(I, J, K)$ of $\mathfrak{su}(2n+1)$. When this is the case, i.e. for appropriate choices of $(I,J,K)$, we shall show that this $\mathfrak{su}(2k+1)$ is indeed Obata invariant. When $n=1$ there are no such inclusion which is why we consider $n>1$.

\par
Proceeding iteratively, we obtain the decomposition
\[
\mathfrak{su}(2n+1) = \mathfrak{b} \oplus \bigoplus_{i=1}^n \mathfrak{d}_i \oplus \bigoplus_{i=1}^n \mathfrak{f}_i,
\]
where $\mathfrak{f}_i = (2n - (2i - 1))\mathbb{C}^2$. The Lie algebra $\mathfrak{su}(2n+1)$ consists of $(2n+1) \times (2n+1)$ skew-Hermitian, trace-free matrices. Such matrices can be written in block form as
\begin{equation} \label{eqn:su2n+1}
\left(
\begin{array}{c|c|cc}
D_1 & \multicolumn{3}{c}{F_1} \\
\hline
\multirow{4}{*}{$-\bar F_1^t$} & D_2 & \multicolumn{2}{c}{F_2} \\
\cline{2-4}
                     & \multirow{3}{*}{$-\bar F_2^t$} & \ddots & \vdots \\
                     &     & \cdots & 
    \begin{array}{|c|c}
     \hline
        D_n & F_n \\
        \hline
        -\bar F_n^t & B
    \end{array}
\end{array}
\right),
\end{equation}
where each $D_i \in \mathfrak{u}(2)$, each $F_i$ is a matrix in $\mathcal{M}(2 \times (2n - (2i - 1)),\C)$, and $B \in \mathbb{C}$ is such that $\sum_i \mathrm{tr}(D_i) + B = 0$. The subspace $\mathfrak{d}_i$ consists of matrices in which only $D_i$ is non-zero, $\mathfrak{f}_i$ consists of those where only $F_i$ is non-zero, and $\mathfrak{b}$ consists of trace-free diagonal matrices that commute with every $D_i$. \par
With this block description, it is clear that the copy of $\mathfrak{su}(2k+1)$ inside $\mathfrak{su}(2n+1)$ is embedded in the bottom-right square block of size $2k+1$. \par
Since $\dim(\bi) = n$, there are up to $n^2$ parameters of hypercomplex structures determined by the choice of isomorphism $\bi \cong \mathbb{R}^n$. We will show that, for some of these hypercomplex structures, the associated Obata connection preserves a left-invariant subbundle. \par
For the sake of simplicity, let us first consider the case of $\mathrm{SU}(5)$. From the argument in the proof of Theorem~\ref{main}, we have
\[
\mathfrak{su}(5) = \mathfrak{u}(1) \oplus \mathfrak{su}(3) \oplus \mathfrak{d}_1 \oplus \mathfrak{f}_1,
\]
and the Joyce decomposition is obtained by further decomposing
\[
\mathfrak{su}(3) = \mathfrak{u}(1) \oplus \mathfrak{d}_2 \oplus \mathfrak{f}_2.
\]
In this case, the subspace $\mathfrak{b}$ has dimension $2$, and one can choose a basis $\{E_1,E_2\}$ of $\mathfrak{b}$ such that $E_2$ generates the $\mathfrak{u}(1)$-copy inside $\mathfrak{su}(3)$. More precisely:
\[
\bi = \left \langle 
E_1:=\left(\begin{array}{cc|ccc}
    3i & 0 & 0 & 0 & 0\\ 
    0 & 3i & 0 & 0 & 0\\ \hline
    0 & 0 & -2i & 0 & 0\\
    0 & 0 & 0 & -2i & 0\\
    0 & 0 & 0 & 0 & -2i
\end{array}\right), \ 
E_2:=\left(\begin{array}{cc|ccc}
    0 & 0 & 0 & 0 & 0\\
    0 & 0 & 0 & 0 & 0\\ \hline
    0 & 0 & i & 0 & 0\\
    0 & 0 & 0 & i & 0\\
    0 & 0 & 0 & 0 & -2i
\end{array} \right)
\right \rangle 
\]
Thus, the Joyce hypercomplex structures depend on $4$ parameters corresponding  to the choice a basis $\{e_1^1,e_1^2\}$ (see Section \ref{sec:hcx} for the notation). Explicitly, we can identify the hypercomplex structures with the matrix $A=(\begin{smallmatrix} a_1 & a_3 \\ a_2 & a_4 \end{smallmatrix})\in \mathrm{GL}(2,\R)$, where $e_1^1= a_1 E_1 + a_2 E_2$, \ $e^2_1= a_3 E_1 + a_4 E_2$.
\begin{thm} \label{thm:su5}
    Consider $\SU(5)$ with the Joyce hypercomplex structure $(I,J,K)$ corresponding to the matrix 
    \[
    A=\begin{pmatrix}
        a_1 & 0 \\
        a_2 & a_4 \\
    \end{pmatrix} \in \mathrm{GL}(2,\R), 
    \]
    i.e., 
    \[
    \mathfrak{su}(5)=\Big( \langle a_1 E_1+a_2 E_2 \rangle \oplus \mathfrak{d}_1 \oplus \mathfrak{f}_1 \Big) \oplus \Big( \langle a_4 E_2 \rangle \oplus \mathfrak{d}_2 \oplus \mathfrak{f}_2\Big).
    \]
    Then the hypercomplex subspace $\mathfrak{su}(3)= \langle a_4 E_2 \rangle \oplus \mathfrak{d}_2 \oplus \mathfrak{f}_2$ is invariant by the Obata connection $\nabla$ of $(I,J,K)$.
    In particular, the holonomy group $\mathrm{Hol}(\nabla) \subsetneqq \mathrm{GL}(6,\mathbb{H})$.
\end{thm}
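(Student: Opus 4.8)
The plan is to exhibit the left-invariant subbundle $H\subset T\,\SU(5)$ generated by the subalgebra $\mathfrak{su}(3)=\h_2\oplus\f_2$, where $\h_2=\langle e_1^2\rangle_\R\oplus\di_2=\langle e_1^2,e_2^2,e_3^2,e_4^2\rangle_\R=\langle e_1^2\rangle_\H$ and $\f_2$ is the corresponding $\f$-summand, and to prove that $H$ is preserved by $\nabla$, exactly as in the proof of Lemma~\ref{Lem:technical}. The space $\mathfrak{su}(3)$ is $I,J,K$-invariant — the line $\h_2$ is quaternionic, and $\f_2$ is invariant by~\eqref{eqn:joyce4} — and $0<\dim_\H\mathfrak{su}(3)=2<6=\dim_\H\SU(5)$; hence a $\nabla$-parallel subbundle of this form is stable under parallel transport, so $\operatorname{Hol}(\nabla)$ cannot act transitively on $\H^6\setminus\{0\}$ and therefore $\operatorname{Hol}(\nabla)\subsetneqq\operatorname{GL}(6,\H)$. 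The assumption that $A$ is lower triangular enters only here: it forces $e_1^2=a_4E_2$ to lie in the abelian factor $\langle E_2\rangle$ of the embedded $\mathfrak{su}(3)$, so that $\h_2\subset\mathfrak{su}(3)$ and hence $\h_2\oplus\f_2=\mathfrak{su}(3)$; for a general $A$ the quaternionic line $\h_2$ would stick out of $\mathfrak{su}(3)$ and the argument would fail.

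To show that $H$ is parallel I would use the Obata formula~\eqref{equ: obata connection sol}. Since $\nabla_X Y$ is linear in $X$ and $\mathfrak{su}(5)=(\h_2\oplus\f_2)\oplus\h_1\oplus\f_1$, it suffices to check $\nabla_X Y\in\mathfrak{su}(3)$ for $X$ in each of these three summands and $Y$ in a basis of $\mathfrak{su}(3)$. For $Y=e_1^2$, Lemma~\ref{lem:nablae1} gives $\nabla_X e_1^2=-X\in\mathfrak{su}(3)$ when $X\in\h_2\oplus\f_2=\mathfrak{su}(3)$ and $\nabla_X e_1^2=0$ otherwise; since $\nabla$ commutes with $I,J,K$, this also handles all of $\h_2=\langle e_1^2\rangle_\H$. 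It remains to take $Y\in\f_2$. If $X\in\mathfrak{su}(3)$, every bracket occurring in~\eqref{equ: obata connection sol} stays inside the subalgebra $\mathfrak{su}(3)$, which is $I,J,K$-invariant, so $\nabla_X Y\in\mathfrak{su}(3)$. If $X\in\h_1$, then $IX\in\h_1$ and $[\h_1,\f_2]\subseteq\f_2$ — this follows from $[\di_1,\f_2]=0$ by~\eqref{eqn:joyce3} together with $[\bi,\f_2]\subseteq\f_2$ by statement~\textbf{1} of Lemma~\ref{Lem:structure}, recalling that $e_1^1=a_1E_1+a_2E_2\in\bi$ — so all four terms of~\eqref{equ: obata connection sol} lie in $\f_2\subset\mathfrak{su}(3)$.

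The essential, and only genuinely computational, case is $X\in\f_1$, $Y\in\f_2$: by statement~\textbf{3} of Lemma~\ref{Lem:structure} all four terms of~\eqref{equ: obata connection sol} then lie in $\f_1$, which is transverse to $\mathfrak{su}(3)$, so the claim holds only if $\nabla_X Y=0$. I would establish this vanishing by writing $I,J$ on $\f_1$ and $\f_2$ as $\operatorname{ad}(e_2^1),\operatorname{ad}(e_3^1)$ and $\operatorname{ad}(e_2^2),\operatorname{ad}(e_3^2)$ respectively and using the Jacobi identity: from $[\di_1,\f_2]=0$ one gets $[IX,Y]=I[X,Y]$, hence $I[IX,Y]=-[X,Y]$; from $[\di_2,\f_1]=0$ (again~\eqref{eqn:joyce3}) one gets $[X,JY]=0$ and $[IX,JY]=0$; substituting into~\eqref{equ: obata connection sol} gives $2\nabla_X Y=[X,Y]-[X,Y]=0$. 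This step, which uses the fine commutation relations~\eqref{eqn:joyce1}--\eqref{eqn:joyce4} and Lemma~\ref{Lem:structure}, is the one I expect to require the most care; everything else is routine bookkeeping. Once $H$ is known to be $\nabla$-parallel, the stated bound on the holonomy follows as in the first paragraph.
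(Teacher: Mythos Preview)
Your overall strategy matches the paper's proof exactly: check $\nabla_X Y\in\mathfrak{su}(3)$ case by case according to where $X$ lies, using Lemma~\ref{lem:nablae1} for $Y\in\h_2$ and the explicit Obata formula~\eqref{equ: obata connection sol} for $Y\in\f_2$. The cases $X\in\mathfrak{su}(3)$ and $X\in\h_1$ are handled correctly, and for $X\in\f_1$, $Y\in\f_2$ your derivation of $[X,Y]+I[IX,Y]=0$ via $[\di_1,\f_2]=0$ and the Jacobi identity is precisely the paper's computation.

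However, the final step contains a genuine error. You assert that $[\di_2,\f_1]=0$ ``again by~\eqref{eqn:joyce3}'' and conclude $[X,JY]=[IX,JY]=0$. But~\eqref{eqn:joyce3} states $[\di_i,\f_j]=0$ only for $i<j$; here $i=2>1=j$, and Lemma~\ref{Lem:structure} gives merely $[\di_2,\f_1]\subseteq\f_1$. In $\mathfrak{su}(5)$ this inclusion is strict and nonzero: $\di_2$ sits in the $D_2$-block and acts nontrivially on the columns of the $2\times 3$ block $F_1$ defining $\f_1$. Consequently $[X,JY]\in\f_1$ is typically nonzero, and so is $[IX,JY]$.

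The fix is immediate and is exactly what the paper does: you do not need the last two brackets to vanish separately. Since $JY\in\f_2$, your own identity applied with $Y$ replaced by $JY$ gives
\[
[X,JY]+I[IX,JY]=0,
\]
and then, using $K=-JI$,
\[
-J[X,JY]+K[IX,JY]=-J\bigl([X,JY]+I[IX,JY]\bigr)=0.
\]
Thus all four terms in~\eqref{equ: obata connection sol} cancel and $\nabla_X Y=0$, completing the argument.
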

\begin{proof}
    Let $X \in \mathfrak{su}(5)$ and $Y \in \mathfrak{su}(3)$, we want to show that $\nabla_X Y \in \mathfrak{su}(3)$. \par
    From the choice of the basis $\{E_1,E_2\}$, and using that $[\mathfrak{f}_2,\mathfrak{f}_2] \subset \mathfrak{su}(3)$, we immediately get that $\mathfrak{su}(3)$ is an hypercomplex sub-algebra. Consequently, if $X \in \mathfrak{su}(3)$ then the result is clear from  the expression of the Obata connection, see \eqref{equ: obata connection sol}. 

    Suppose now $X=e_1^1=a_1 E_1+a_2 E_2$, then 
    \begin{align*}
        \nabla_{e_1^1} Y 
        &= \nabla_Y e_1^1 + [e_1^1, Y] \\
        &= [e_1^1, Y ] \in \mathfrak{f}_2 \subset \mathfrak{su}(3)
    \end{align*}
    where we used Lemma \ref{lem:nablae1}, \eqref{eqn:joyce1} and Lemma \ref{Lem:structure}.

    Suppose now $X \in \mathfrak{d}_1$. Again from Lemma \ref{lem:nablae1}, $\nabla_X e_1^2=0$ and so $\nabla_X \mathfrak{d}_2=0$. For the remaining case, we have
    \begin{align*}
     2   \nabla_X \mathfrak{f}_2 &= [X,\mathfrak{f}_2]+ I[IX, \mathfrak{f}_2]-J[X,J\mathfrak{f}_2]+K[IX,J\mathfrak{f}_2] \in \mathfrak{f}_2 \subset \mathfrak{su}(3),
    \end{align*}
    since $[X,\mathfrak{f}_2],[X,J\mathfrak{f}_2]$ vanish from \eqref{eqn:joyce3} and $[IX, \mathfrak{f}_2], [IX,J\mathfrak{f}_2]$ lie in $\f_2$ since $IX$ belongs to $\langle e_1^1\rangle \oplus \di_1$. 

    Lastly, suppose that $X \in \mathfrak{f}_1$. As before from Lemma \ref{lem:nablae1} we have $\nabla_X e_1^2 = 0$, and hence $\nabla_X \mathfrak{d}_2 = 0$. It remains to compute $\nabla_X \mathfrak{f}_2$. Let $Z_1 \in \mathfrak{f}_1$ and $Z_2 \in \mathfrak{f}_2$ then
    \begin{align*}
        [Z_1,Z_2]+I[IZ_1,Z_2] &= [Z_1,Z_2]+I[[e_2^1,Z_1],Z_2]\\
        &=[Z_1,Z_2]-I[[Z_1,Z_2],e_2^1]- I[[Z_2,e_2^1],Z_1]\\
        &= [Z_1,Z_2]-I(-I[Z_1,Z_2])\\
        &=0
    \end{align*}
     where we used \eqref{eqn:joyce3} and that $[Z_1,Z_2] \in \mathfrak{f}_1$ from Lemma \ref{Lem:structure}. Comparing with expression \eqref{equ: obata connection sol}, and using that $\f_2$ is $J$-invariant, we see that $\nabla_X \mathfrak{f}_2=0$.
\end{proof}
Let us now consider the Lie algebra $\mathfrak{su}(2n+1)$, $n > 1$, and let us fix a basis ${E_1, \dots, E_n}$ of the abelian subalgebra $\bi \cong \mathbb{R}^n$ such that each element $E_k$ belongs to the subalgebra $\mathfrak{su}(2n + 3 - 2k) \subset \mathfrak{su}(2n+1)$. For instance, $E_1 \in \mathfrak{su}(2n+1)$, $E_2 \in \mathfrak{su}(2n-1)$, and $E_n \in \mathfrak{su}(3)$. \par
From the matrix point of view \big(see \eqref{eqn:su2n+1}\big), each $E_k=\mathrm{diag}(a_1,a_2,\dots,a_{2n+1})$ is a diagonal, trace-free matrix that commutes with every block $D_j$, and satisfies $a_{i} = 0$ for all $i = 1, \dots, 2k-2$. We generalise Theorem \ref{thm:su5} as follows:
\begin{thm} \label{thm:su2n+1}
    Fix $ n>1$ and an integer $k\in \{1,\dots,n-1\}$. Consider the Lie group $\SU(2n+1)$ with the Joyce hypercomplex structure $(I,J,K)$ corresponding to the matrix 
    \[
A = 
\left(
\begin{array}{ccc|ccc}
a_{1,1} & \cdots & a_{1,n-k} &  0 & \cdots & 0\\
\vdots & \ddots & \vdots &  \vdots & \ddots & \vdots \\
a_{n-k,1} & \cdots & a_{n-k,n-k} & 0 & \cdots & 0\\
\hline
a_{n-k+1,1} & \cdots & a_{n-k+1,n-k} & a_{n-k+1,n-k+1} &\cdots &a_{n-k+1,n} \\
\vdots & \ddots & \vdots & \vdots & \ddots & \vdots\\
a_{n,1} & \cdots & a_{n,n-k} & a_{n,n-k+1} & \cdots & a_{n,n}
\end{array}
\right)\in \mathrm{GL}(n,\R), 
    \]
    i.e., 
    \[
    \mathfrak{su}(2n+1)= \bigoplus_{i=1}^{n-k}\Big( \langle e_1^i \rangle \oplus \mathfrak{d}_i \oplus \mathfrak{f}_i \Big) \oplus \bigoplus_{i=n-k+1}^{n} \Big( \langle e_1^i \rangle \oplus \mathfrak{d}_i \oplus \mathfrak{f}_i\Big),
    \]
    where for $i=1,\dots,n-k$:
    \[
     \ e^i_1=\sum_{j=1}^n a_{ji}E_j,  
    \]
    and for $i=n-k,\dots,n$:
    \[
     e^i_1=\sum_{j=n-k+1}^n a_{ji}E_j \in \mathfrak{su}(2k+1).
    \]
    Then the hypercomplex subspace \[\mathfrak{su}(2k+1)=\bigoplus_{i=n-k+1}^{n} \Big( \langle e_1^i \rangle \oplus \mathfrak{d}_i \oplus \mathfrak{f}_i\Big)\] is invariant by the Obata connection $\nabla$ of $(I,J,K)$.
    In particular, the holonomy group $\mathrm{Hol}(\nabla) \subsetneqq \mathrm{GL}(n(n+1),\mathbb{H})$.
\end{thm}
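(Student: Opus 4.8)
The plan is to mirror the proof of Theorem \ref{thm:su5}, organising the argument according to the Joyce layers. First I would record the structural input that makes the statement meaningful: since $A$ is block lower triangular with invertible diagonal blocks, the span $\langle e_1^i : i>n-k\rangle$ equals $\langle E_{n-k+1},\dots,E_n\rangle$, which is precisely $\mathfrak{b}\cap\mathfrak{su}(2k+1)$ for the bottom-right block copy of $\mathfrak{su}(2k+1)$ appearing in \eqref{eqn:su2n+1}. Hence $\mathfrak{su}(2k+1)=\bigoplus_{i>n-k}(\mathfrak{h}_i\oplus\mathfrak{f}_i)$ coincides with that block subalgebra; being a sum of quaternionic lines $\mathfrak{h}_i$ together with the $(I,J,K)$-invariant subspaces $\mathfrak{f}_i$, it is an $(I,J,K)$-invariant Lie subalgebra, i.e.\ a hypercomplex subalgebra. (This is exactly where the triangular form of $A$ enters: for a generic $A$ the same direct sum need not be a subalgebra, since the $\mathfrak{b}$-component of $[\mathfrak{f}_i,\mathfrak{f}_i]$ lands in $\langle E_{n-k+1},\dots,E_n\rangle$ rather than in $\langle e_1^i:i>n-k\rangle$.) It then remains to show $\nabla_X Y\in\mathfrak{su}(2k+1)$ for all $X\in\mathfrak{su}(2n+1)$ and $Y\in\mathfrak{su}(2k+1)$, which by linearity I would verify with $X$ running over the layers $\mathfrak{h}_i,\mathfrak{f}_i$ ($i\le n-k$) and $X\in\mathfrak{su}(2k+1)$, and $Y$ over $\mathfrak{h}_j,\mathfrak{f}_j$ ($j>n-k$). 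The only tools needed are the Obata formula \eqref{equ: obata connection sol}, Lemma \ref{lem:nablae1}, Lemma \ref{Lem:structure}, and \eqref{eqn:joyce1}--\eqref{eqn:joyce4}; recall also that for $\SU(2n+1)$ one has $\ell=0$, so every $e_1^i$ lies in $\mathfrak{b}$.

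For the routine cases: if $X\in\mathfrak{su}(2k+1)$ the conclusion is immediate from \eqref{equ: obata connection sol}, since $\mathfrak{su}(2k+1)$ is an $(I,J,K)$-invariant subalgebra. If $X\in\mathfrak{h}_i$ or $X\in\mathfrak{f}_i$ with $i\le n-k$ and $Y\in\mathfrak{h}_j$ with $j>n-k$, then Lemma \ref{lem:nablae1} gives $\nabla_X e_1^j=0$ (because $X\notin\mathfrak{h}_j\oplus\mathfrak{f}_j$), and since $e_2^j=Ie_1^j$, $e_3^j=Je_1^j$, $e_4^j=Ke_1^j$ and $\nabla I=\nabla J=\nabla K=0$, we get $\nabla_X\mathfrak{h}_j=0$. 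If $X\in\mathfrak{h}_i$ with $i\le n-k$ and $Y\in\mathfrak{f}_j$ with $j>n-k$, I would observe that for any $Z\in\mathfrak{h}_i$ and $W\in\mathfrak{f}_j$ one has $[Z,W]\in\mathfrak{f}_j$ (use Lemma \ref{Lem:structure}(1) if $Z\in\langle e_1^i\rangle$, since $e_1^i\in\mathfrak{b}$, and \eqref{eqn:joyce3} if $Z\in\mathfrak{d}_i$), so as $IX\in\mathfrak{h}_i$ and $\mathfrak{f}_j$ is $I,J,K$-invariant, every bracket on the right-hand side of \eqref{equ: obata connection sol} lies in $\mathfrak{f}_j$, whence $\nabla_X Y\in\mathfrak{f}_j\subset\mathfrak{su}(2k+1)$.

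The heart of the argument — and the step I expect to be the main obstacle — is the case $X\in\mathfrak{f}_i$ with $i\le n-k$ and $Y\in\mathfrak{f}_j$ with $j>n-k$, so $i<j$. Here a naive use of \eqref{equ: obata connection sol} together with Lemma \ref{Lem:structure}(3) only gives $\nabla_X Y\in\mathfrak{f}_i$, which is \emph{not} contained in $\mathfrak{su}(2k+1)$; one must exhibit an exact cancellation forcing $\nabla_X Y=0$. This is precisely the computation carried out in the proof of Theorem \ref{thm:su5} for the pair $(\mathfrak{f}_1,\mathfrak{f}_2)$, and it generalises verbatim to any $i<j$: writing $IX=[e_2^i,X]$ and using the Jacobi identity, $[\mathfrak{d}_i,\mathfrak{f}_j]=0$ (from \eqref{eqn:joyce3}), $[\mathfrak{f}_i,\mathfrak{f}_j]\subseteq\mathfrak{f}_i$ (Lemma \ref{Lem:structure}(3)), and the fact that $I$ acts on $\mathfrak{f}_i$ as $[e_2^i,\cdot]$ (from \eqref{eqn:joyce4}), one finds $[X,Y]+I[IX,Y]=0$ and likewise $-J[X,JY]+K[IX,JY]=0$, hence $\nabla_X Y=0\in\mathfrak{su}(2k+1)$. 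Combining all cases, $\mathfrak{su}(2k+1)$ extends to a left-invariant, $(I,J,K)$-invariant, $\nabla$-parallel subbundle $H_k\subset TM$; since $\dim_{\mathbb H}\mathfrak{su}(2k+1)=k(k+1)$ satisfies $1\le k(k+1)<n(n+1)=\dim_{\mathbb H}\SU(2n+1)$ for $1\le k\le n-1$, the subbundle $H_k$ is nonzero and proper, so by the argument of Lemma \ref{Lem:technical} the holonomy group cannot act transitively on $\mathbb H^{n(n+1)}\setminus\{0\}$ and therefore $\mathrm{Hol}(\nabla)\subsetneqq\mathrm{GL}(n(n+1),\mathbb H)$.
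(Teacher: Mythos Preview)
Your proof is correct and follows precisely the route the paper takes: the paper's own proof simply says ``analogous to that of Theorem~\ref{thm:su5}, and uses crucially the fact that, with respect to $(I,J,K)$, the subalgebra $\mathfrak{su}(2k+1)$ is hypercomplex,'' and you have carried out that analogy case by case, with the same tools (the Obata formula \eqref{equ: obata connection sol}, Lemma~\ref{lem:nablae1}, Lemma~\ref{Lem:structure}, and \eqref{eqn:joyce1}--\eqref{eqn:joyce4}). Your identification of the cancellation $[X,Y]+I[IX,Y]=0$ and $-J[X,JY]+K[IX,JY]=0$ for $X\in\mathfrak{f}_i$, $Y\in\mathfrak{f}_j$ with $i<j$ as the crux of the argument, and your explanation of why the block-triangular form of $A$ is exactly what forces $\bigoplus_{i>n-k}(\mathfrak{h}_i\oplus\mathfrak{f}_i)$ to coincide with the genuine subalgebra $\mathfrak{su}(2k+1)$, both match the paper's intent.
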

\begin{proof}
The proof is analogous to that of Theorem~\ref{thm:su5}, and uses crucially the fact that, with respect to the hypercomplex structure $(I, J, K)$, the subalgebra $\mathfrak{su}(2k+1)$ is hypercomplex.
\end{proof}
Combining Theorems \ref{main}, \ref{thm:su2n+1} and the fact that $S^1 \times \SU(2)$ is Obata flat, we get:
\begin{cor}
    Every manifold in the list $\eqref{eqn:classification}$, aside from $\mathrm{SU}(3)$, admits a Joyce hypercomplex structure with Obata holonomy \textit{strictly} contained in $\mathrm{GL}(n,\mathbb{H})$. 
\end{cor}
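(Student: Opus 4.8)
The plan is to run through the list \eqref{eqn:classification} and partition it into three groups, each disposed of by a result already established.

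\emph{Step 1 (the generic case).} For every manifold $M$ in \eqref{eqn:classification} that is neither the Hopf surface $S^1\times\SU(2)$ nor a member of the family $\SU(2n+1)$, Theorem \ref{main} directly gives the stronger statement that the Obata holonomy of \emph{any} Joyce hypercomplex structure on $M$ is a proper subgroup of $\mathrm{GL}(n,\H)$. Since Joyce hypercomplex structures always exist on $M$ (Section \ref{sec:hcx}), the existence claim of the corollary holds for all these $M$ a fortiori, and there is nothing further to check. Note that the entry $S^1\times\SU(2k)$ contributes to this step for every $k\ge2$, while its $k=1$ instance is exactly the Hopf surface dealt with next.

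\emph{Step 2 (the Hopf surface).} The manifold $S^1\times\SU(2)$ has quaternionic dimension $1$, and by \cite{SV} its Obata connection is flat; hence $\mathrm{Hol}(\nabla)$ is discrete (indeed $\cong\Z$), which is trivially a proper subgroup of $\mathrm{GL}(1,\H)$.

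\emph{Step 3 (the groups $\SU(2n+1)$, $n\ge2$).} Here I would invoke Theorem \ref{thm:su2n+1}: for any $k\in\{1,\dots,n-1\}$ and any admissible block--lower--triangular $A\in\mathrm{GL}(n,\R)$ as in its statement, the associated Joyce hypercomplex structure has Obata connection preserving the hypercomplex subalgebra $\mathfrak{su}(2k+1)$, so $\mathrm{Hol}(\nabla)\subsetneqq\mathrm{GL}(n(n+1),\H)$. One only records that $\dim_\R\SU(2n+1)=(2n+1)^2-1=4n(n+1)$, so $n(n+1)$ is the quaternionic dimension and $\mathrm{GL}(n(n+1),\H)$ is the ``full'' group in question. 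This covers $\SU(5),\SU(7),\dots$, i.e. all $\SU(2n+1)$ with $n>1$.

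\emph{Why $\SU(3)$ is genuinely omitted, and the main difficulty.} Steps 1--3 exhaust \eqref{eqn:classification} except for $\SU(3)$, which is the $n=1$ case of $\SU(2n+1)$; Theorem \ref{thm:su2n+1} requires $n>1$ precisely because the chain $\mathfrak{su}(3)\subset\mathfrak{su}(5)\subset\cdots$ of Obata-invariant subalgebras it exploits is empty when $n=1$. This omission cannot be removed: Soldatenkov proved \cite{Sol} that \emph{every} Joyce hypercomplex structure on $\SU(3)$ has full Obata holonomy $\mathrm{GL}(2,\H)$. Combining Steps 1--3 therefore yields the corollary. There is no real mathematical obstacle here beyond Theorems \ref{main} and \ref{thm:su2n+1}; the only care needed is the bookkeeping — verifying that the three groups really do cover \eqref{eqn:classification}$\,\setminus\{\SU(3)\}$, that the $k=1$ instance of $S^1\times\SU(2k)$ coincides with the Hopf surface of Step 2 while all $k\ge2$ instances fall under Step 1, and that $n(n+1)$ is indeed the quaternionic dimension of $\SU(2n+1)$ in Step 3.
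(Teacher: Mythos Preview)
Your proposal is correct and follows essentially the same approach as the paper: the paper's proof is a one-line invocation of Theorems \ref{main} and \ref{thm:su2n+1} together with the Obata-flatness of $S^1\times\SU(2)$, which is precisely your three-step partition. Your additional bookkeeping (the $k=1$ versus $k\ge2$ split for $S^1\times\SU(2k)$, and the quaternionic dimension check for $\SU(2n+1)$) is more explicit than the paper's but adds no new ideas.
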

Theorem \ref{thm:su2n+1} shows that there are infinitely many left-invariant hypercomplex structures on \(\mathrm{SU}(2n+1)\) for which the Obata connection has reduced holonomy. However, this does not cover all the possible cases. For simplicity, let us consider the case of \(\mathrm{SU}(5)\) in more detail. 

Recall that the Joyce hypercomplex structures on \(\mathrm{SU}(5)\) are parametrised by matrices $(\begin{smallmatrix} a_1 & a_3 \\ a_2 & a_4 \end{smallmatrix})$. In Theorem~\ref{thm:su5}, we showed that when $a_3=0$ the Obata connection preserves a left-invariant subbundle of \(T\mathrm{SU}(5)\). In the complementary case when \(a_3 \neq 0\), the subalgebra \(\mathfrak{su}(3)\) is no longer hypercomplex. Although a full characterisation of this case remains open, we conjecture that the Obata connection in this case has holonomy \textit{equal} to \(\mathrm{GL}(6, \mathbb{H})\). We expect that a similar behaviour also holds for $\SU(2n+1)$, with $ n > 1$.  As a partial evidence to our claim, we show:
\begin{thm}\label{thm: su5}
    Consider $\SU(5)$ with the Joyce hypercomplex structure $(I,J,K)$ corresponding to the matrix 
    \[
    A=\begin{pmatrix}
        0 & 1 \\
        1 & 0 \\
    \end{pmatrix} \in \mathrm{GL}(2,\R), 
    \]
    i.e., 
    \[
    \mathfrak{su}(5)=\Big( \langle E_2 \rangle \oplus \mathfrak{d}_1 \oplus \mathfrak{f}_1 \Big) \oplus \Big( \langle  E_1 \rangle \oplus \mathfrak{d}_2 \oplus \mathfrak{f}_2\Big).
    \]
    Then the Obata holonomy group $\mathrm{Hol}(\nabla)$ is isomorphic to $\mathrm{GL}(6,\mathbb{H})$.
\end{thm}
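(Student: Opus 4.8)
The strategy is to show directly that the Obata holonomy algebra $\mathfrak{hol}(\nabla)$ is all of $\mathfrak{gl}(6,\mathbb{H})$ by computing curvature endomorphisms and their iterated covariant derivatives, and then invoking the Ambrose--Singer theorem in the left-invariant form of Theorem~\ref{Thm:Alek}. Since $\SU(5)$ is simply connected, $\operatorname{Hol}(\nabla)=\exp(\mathfrak{hol}(\nabla))$ is then the full connected group $\mathrm{GL}(6,\mathbb{H})$ (it is connected; note $\mathrm{GL}(n,\mathbb{H})$ is connected). First I would fix the explicit Joyce decomposition $\mathfrak{su}(5)=\bigl(\langle E_2\rangle\oplus\mathfrak{d}_1\oplus\mathfrak{f}_1\bigr)\oplus\bigl(\langle E_1\rangle\oplus\mathfrak{d}_2\oplus\mathfrak{f}_2\bigr)$ coming from $A=(\begin{smallmatrix}0&1\\1&0\end{smallmatrix})$, so that $e_1^1=E_2$ and $e_1^2=E_1$, write down a left-invariant coframe dual to the basis $(e_1^i,e_2^i,e_3^i,e_4^i, f^i_\bullet)$, record the structure equations, and then form the $\mathfrak{gl}(3,\mathbb{H})$-valued connection $1$-form $\Theta$ from the Obata formula~\eqref{equ: obata connection sol}, exactly as was done for $\mathbb{T}^2\times\mathrm{Sp}(2)$ in Section~\ref{Subsec:Sp2}. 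The curvature $F_\Theta=d\Theta+\Theta\wedge\Theta$ then gives the first batch of holonomy generators.

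**The crux.** The essential point is that, unlike the case $a_3=0$ treated in Theorem~\ref{thm:su5}, here $\mathfrak{su}(3)=\langle E_1\rangle\oplus\mathfrak{d}_2\oplus\mathfrak{f}_2$ is \emph{not} a hypercomplex subalgebra, so Lemma~\ref{Lem:technical} does not apply and no block of $\Theta$ is forced to vanish. The decisive observation is that $e_1^1=E_2$ no longer centralises $\mathfrak{f}_2$: indeed $[E_2,\mathfrak{f}_2]\subseteq\mathfrak{f}_2$ is a nonzero endomorphism, which destroys the block-triangular structure that produced reduced holonomy in Theorem~\ref{main}. Concretely, from Lemma~\ref{lem:nablae1} one has $\nabla_X e_1^2=-X$ for $X\in\mathfrak{h}_2\oplus\mathfrak{f}_2$, and since $\mathfrak{f}_2$ now interacts with $\mathfrak{h}_1$ through $E_2$, the curvature terms $R(\cdot,\cdot)$ acquire components mixing the $\mathfrak{h}_1$, $\mathfrak{f}_1$, $\mathfrak{h}_2$, $\mathfrak{f}_2$ blocks. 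I expect this bookkeeping — identifying enough linearly independent endomorphisms among $R(x,y)$, $(\nabla_z R)(x,y)$, $(\nabla_w\nabla_z R)(x,y)$, $\dots$ to span the $4\cdot 36=144$-dimensional space $\mathfrak{gl}(6,\mathbb{H})$ — to be the main obstacle: it is a large but finite linear-algebra computation, best organised by grouping generators according to their $\mathbb{H}$-block position and exploiting the $\mathfrak{sp}(1)^2$-equivariance coming from $\mathfrak{d}_1,\mathfrak{d}_2$ acting adjointly.

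**Carrying it out.** I would proceed in three stages. Stage one: extract from $F_\Theta$ the span $V_0$ of curvature endomorphisms $R(x,y)$; by $\mathfrak{d}_i$-equivariance these come in $\mathbb{H}$-linear quadruples, so one only needs a few representatives. Stage two: compute $\nabla_X F_\Theta$ for $X$ running over the basis, using $\nabla\tau = (\nabla\alpha)\otimes\tau + \alpha\otimes\nabla\tau$ as in Lemma~\ref{lem: new generators}; the new endomorphisms arise from the $\alpha\otimes\nabla_X\tau$ terms, and here — crucially unlike the $\mathrm{Sp}(2)$ case — the term $\nabla_X\tau$ is no longer confined to the bottom block row because $E_2$ acts nontrivially on $\mathfrak{f}_2$. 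Let $V_1=V_0+\operatorname{span}\{(\nabla_XR)(y,z)\}$. Stage three: iterate once or twice more to get $V_2, V_3,\dots$, and show the sequence stabilises at $\mathfrak{gl}(6,\mathbb{H})$; equivalently, show that the $\nabla$-invariant subspace of $\mathfrak{gl}(6,\mathbb{H})$ generated by $V_0$ under bracketing with the $\nabla_z$ operators is everything. A cleaner endgame: once one shows $\mathfrak{hol}(\nabla)$ acts \emph{irreducibly} on $\mathbb{H}^6$ (which is plausible here precisely because $\mathfrak{su}(3)$ is no longer hypercomplex so no block is preserved), the classification of irreducible torsion-free holonomies~\cite{MS} leaves only $\mathrm{Sp}(6)$, $\mathrm{SL}(6,\mathbb{H})$, $\mathrm{GL}(6,\mathbb{H})$; the first is excluded since the Obata connection admits no compatible metric (indeed $\nabla\mathcal E=\mathrm{Id}$ with $\mathcal E\neq 0$), and $\mathrm{SL}(6,\mathbb{H})$ is excluded by the trace/volume argument of \cite{BDV,SV} together with $\operatorname{tr}\Theta\neq 0$ (the presence of the genuine dilation generator forces $q_3$ in a matrix like~\eqref{equ: general matrices} to have nonzero real part here). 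This reduces the problem to proving irreducibility, which I would attempt by showing that the orbit under $\mathfrak{hol}(\nabla)$ of any nonzero vector $v\in\mathbb{H}^6$ contains a basis: starting from $v$ and applying the explicit curvature endomorphisms $\tau_i$ and their covariant derivatives $\nu_j$, one checks — again using that $[E_2,\mathfrak{f}_2]\neq 0$ is the key new ingredient — that one can reach every $\mathbb{H}$-coordinate direction. The main obstacle remains verifying that these explicit endomorphisms genuinely generate enough: this is where the specific value $A=(\begin{smallmatrix}0&1\\1&0\end{smallmatrix})$ is used, and a careful but routine rank computation closes the argument.
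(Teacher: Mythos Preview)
Your strategy coincides with the paper's: compute the curvature and its iterated covariant derivatives and invoke Theorem~\ref{Thm:Alek} to identify $\mathfrak{hol}(\nabla)$, then use simple connectedness of $\SU(5)$. Two corrections are in order, however.

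First, a slip: the quaternionic dimension of $\SU(5)$ is $6$, so the connection $1$-form is $\mathfrak{gl}(6,\mathbb{H})$-valued, not $\mathfrak{gl}(3,\mathbb{H})$-valued, and your appeal to the $3\times 3$ shape of~\eqref{equ: general matrices} is misplaced---that display belongs to the $\mathbb{T}^2\times\mathrm{Sp}(2)$ computation, not to $\SU(5)$.

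Second, and more substantively, the paper does not organise the computation by hand as you propose. It uses MAPLE to find $\dim\mathcal{R}=52$, $\dim\{\mathcal{R},\nabla\mathcal{R}\}=138$, and $\dim\{\mathcal{R},\nabla\mathcal{R},\nabla^2\mathcal{R}\}=144=\dim\mathfrak{gl}(6,\mathbb{H})$, and the authors remark immediately afterwards that carrying this out by hand is not feasible. So your equivariance bookkeeping is optimistic as a practical matter, though correct in principle. Your ``cleaner endgame''---prove irreducibility, then eliminate $\mathrm{Sp}(6)$ and $\mathrm{SL}(6,\mathbb{H})$ via \cite{MS} and \cite{BDV,SV}---is indeed the natural alternative, but the paper explicitly says that Soldatenkov's irreducibility argument for $\SU(3)$ could not be adapted to this case; absent a genuinely new idea, that route is not known to close, and the computer-assisted rank computation is the only proof currently on record.
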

\begin{proof}
    Denote by $R$ the curvature tensor of the Obata connection (\ref{equ: obata connection sol}) and
    let $\nabla^k\mathcal{R}$ denote the space of $k^{th}$ covariant derivative of the curvature tensor i.e.
    \[
    \nabla^k\mathcal{R}:=\{\nabla^k_{x_1,\cdots,x_k}R_{x,y}\ |\ \forall \ x,y,x_i \in \mathfrak{su}(5)\}.
    \]
    Observe that $R$ and its derivatives can be computed using only the Lie bracket and the hypercomplex structures $I,J,K$ (which are themselves determined by the Lie bracket after fixing $\mathcal{B}$, see Section \ref{sec:hcx}). \par
    With the aid of MAPLE software, we  
    have been able to determine the dimension of the latter spaces as follows: 
    \begin{equation}
    \mathrm{dim}(\mathcal{R})=52, \quad \mathrm{dim}(\{\mathcal{R},\nabla \mathcal{R}\})=138, \quad
    \mathrm{dim}(\{\mathcal{R},\nabla\mathcal{R},\nabla^2\mathcal{R}\})=144.
    \end{equation}
    Since $\dim(\mathfrak{gl}(6,\mathbb{H}))=144$ and we know that $\mathfrak{hol}(\nabla) \subset \mathfrak{gl}(6,\mathbb{H})$, the result follows from Theorem \ref{Thm:Alek}, the classification of the holonomy groups of irreducible torsion free affine connections in \cite{MS} and the fact that $\mathrm{SU}(5)$ is simply connected.
\end{proof}
Note that our proof relies on directly determining the dimension of the holonomy algebra with the help of the computer; this is typically unfeasible in higher dimensions. In \cite{Sol}, Soldatenkov was able to prove the irreducibility of the holonomy group in the $\mathrm{SU}(3)$ case without having to determine the rank of the holonomy algebra. Unfortunately, we have been unable to generalise his argument to the $\mathrm{SU}(2n+1)$ case.
It is worth mentioning, however, that we were able to check that one needs to compute up to $\nabla^4 \mathcal{R}$ in the $\SU(3)$ case in order to generate the full holonomy algebra of $\mathfrak{gl}(2,\mathbb{H})$; this provides a more direct, but computation-heavy, proof of the result in \cite{Sol}. It would be interesting to have a neater argument which is applicable to all the cases excluded by Theorem \ref{thm:su2n+1}. \par
\medskip

As a consequence of Theorems \ref{thm:su2n+1} and \ref{thm: su5}, it follows that the irreducibility of the Obata holonomy group is not preserved under variations of the hypercomplex structure, even within the Joyce parameter space $\mathrm{GL}(2,\R)$. For instance, one can deform $A_0=(\begin{smallmatrix}
   0 & 1 \\
   1 & 0 \\
\end{smallmatrix})$ to $A_1=(\begin{smallmatrix}
   1 & 0 \\
   2 & -1 \\
\end{smallmatrix})$ via the curve
\[
A_t=\begin{pmatrix}
   t & 1-t \\
   1+t & -t \\
\end{pmatrix} \in \mathrm{GL}(2,\R) , \ \ t\in (-\epsilon,1+\epsilon).
\]
We can summarise the above observation into:
\begin{cor}
The irreducibility of the holonomy of the Obata connection is not preserved under deformations of the hypercomplex structure. 
\end{cor}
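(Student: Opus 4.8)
The plan is to reduce the statement to a single explicit example on $\SU(5)$, using the two theorems just proved. Recall that the left-invariant (Joyce) hypercomplex structures on $\SU(5)$ are parametrised by the matrices $A=\bigl(\begin{smallmatrix} a_1 & a_3 \\ a_2 & a_4 \end{smallmatrix}\bigr)\in\mathrm{GL}(2,\R)$, via $e_1^1=a_1E_1+a_2E_2$, $e_1^2=a_3E_1+a_4E_2$. I would first record that the assignment $A\mapsto(I_A,J_A,K_A)$ is smooth: the vectors $e_1^i$ depend linearly on the entries of $A$, and everything else in the construction of Section~\ref{sec:hcx} (the choice of standard $\mathfrak{su}(2)$-bases and the action of $I,J,K$ via Lie brackets) is algebraic in these data; hence the associated Obata connections $\nabla_A$ — given by the universal formula \eqref{equ: obata connection sol} — and their holonomy representations vary smoothly with $A$. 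Thus a smooth path in $\mathrm{GL}(2,\R)$ is exactly a deformation of the hypercomplex structure on the fixed manifold $\SU(5)$.

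Next I would introduce the curve
\[
A_t=\begin{pmatrix} t & 1-t \\ 1+t & -t \end{pmatrix},\qquad t\in(-\epsilon,1+\epsilon),
\]
and check that it genuinely parametrises hypercomplex structures: $\det A_t = -t^2-(1-t)(1+t)=-t^2-(1-t^2)=-1\neq 0$, so $A_t\in\mathrm{GL}(2,\R)$ for every $t$. Its endpoints are the two matrices whose Obata holonomy we have already analysed: $A_0=\bigl(\begin{smallmatrix} 0 & 1 \\ 1 & 0 \end{smallmatrix}\bigr)$ and $A_1=\bigl(\begin{smallmatrix} 1 & 0 \\ 2 & -1 \end{smallmatrix}\bigr)$. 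By Theorem~\ref{thm: su5}, the Obata holonomy of the structure attached to $A_0$ equals $\mathrm{GL}(6,\H)$; since $\SU(5)$ is simply connected, full and restricted holonomy coincide, and the standard representation of $\mathrm{GL}(6,\H)$ on $\H^6$ is irreducible. On the other hand $A_1$ has vanishing $(1,2)$-entry, so it satisfies the hypothesis of Theorem~\ref{thm:su5}; consequently the Obata connection of the structure attached to $A_1$ preserves the left-invariant subbundle generated by the hypercomplex subalgebra $\mathfrak{su}(3)=\langle E_2\rangle\oplus\mathfrak{d}_2\oplus\mathfrak{f}_2$, and therefore its holonomy representation is reducible.

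It then follows that along the smooth family $\{(I_{A_t},J_{A_t},K_{A_t})\}_{t\in(-\epsilon,1+\epsilon)}$ of hypercomplex structures on $\SU(5)$ the Obata holonomy passes from irreducible (at $t=0$) to reducible (at $t=1$), which is precisely the assertion; in particular the two structures are not equivalent. I do not anticipate a real obstacle: the only points needing a word of justification are the smooth dependence of $(I,J,K)$ on $A$, which is immediate from the construction, and the non-degeneracy of $A_t$, which is the one-line determinant computation above. All the substantive content is carried by Theorems~\ref{thm:su5} and~\ref{thm: su5}.
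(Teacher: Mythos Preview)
Your argument is correct and follows exactly the paper's own approach: the same curve $A_t=\bigl(\begin{smallmatrix} t & 1-t \\ 1+t & -t \end{smallmatrix}\bigr)$ in $\mathrm{GL}(2,\R)$ connecting $A_0$ (full holonomy by Theorem~\ref{thm: su5}) to $A_1$ (reducible holonomy by Theorem~\ref{thm:su5}). You have simply made explicit the smooth dependence on $A$ and the nonvanishing of $\det A_t$, which the paper leaves implicit.
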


\section{Proof of Theorem \ref{THM:B}}\label{Sec:restrhol}
In this section, we investigate Joyce hypercomplex manifolds whose Obata connection has restricted holonomy group contained in $\mathrm{SL}(n,\mathbb{H})$.

For a hypercomplex manifold, the restricted holonomy lies in $\mathrm{SL}(n,\mathbb{H})$ if and only if the Ricci tensor of the Obata connection vanishes, see \cite[Section 6]{Sal86} and \cite[Theorem 5.6]{AM}. In view of this, we begin by determining an explicit formula of the Obata-Ricci tensor on a hypercomplex Lie algebra.

\begin{lem}\label{lem:Obata1form}
Let $\g $ be a Lie algebra of real dimension $4n$ equipped with a hypercomplex structure $(I,J,K)$. Then the Obata connection $1$-form induced on the canonical line bundle $\Lambda^{2n,0}_I(\mathfrak{g})$ is given by
\[
\eta(X)=-\frac12 \mathrm{tr}(\mathrm{ad}_X)-\frac12\mathrm{tr}(L\mathrm{ad}_{LX})\,, \qquad X\in \g\,,
\]
for any $L\in \{I,J,K\}$. In particular, the Obata-Ricci tensor is
\[
\mathrm{Ric}^{\mathrm{Ob}}(X,Y)= \frac12\mathrm{tr}(\mathrm{ad}_{[X,Y]})+\frac12\mathrm{tr}(L\mathrm{ad}_{L[X,Y]})\,, \qquad X,Y\in \g\,.
\]
\end{lem}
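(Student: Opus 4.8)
The plan is to reduce the whole statement to one trace computation. Writing formula~\eqref{equ: obata connection sol} operatorially in the argument $Y$ gives the endomorphism
\[
\nabla_X=\tfrac12\bigl(\mathrm{ad}_X+I\,\mathrm{ad}_{IX}-J\,\mathrm{ad}_X\,J+K\,\mathrm{ad}_{IX}\,J\bigr)\in\mathfrak{gl}(\g),
\]
where juxtaposition denotes composition of operators (so, e.g., $\mathrm{ad}_X\,J$ is the map $Y\mapsto[X,JY]$). I would then take the real trace, using its cyclicity together with the quaternionic identities $J^2=-\mathrm{Id}$ and $JK=I$: these give $\mathrm{tr}(J\,\mathrm{ad}_X\,J)=\mathrm{tr}(\mathrm{ad}_X\,J^2)=-\mathrm{tr}(\mathrm{ad}_X)$ and $\mathrm{tr}(K\,\mathrm{ad}_{IX}\,J)=\mathrm{tr}(\mathrm{ad}_{IX}\,JK)=\mathrm{tr}(I\,\mathrm{ad}_{IX})$, whence
\[
\mathrm{tr}_{\R}(\nabla_X)=\mathrm{tr}(\mathrm{ad}_X)+\mathrm{tr}(I\,\mathrm{ad}_{IX}).
\]

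To pass to an arbitrary $L\in\{I,J,K\}$ I would invoke the fact that the Obata connection is intrinsic: it preserves all of $I,J,K$, and since every cyclic permutation $(L,M,N)$ of $(I,J,K)$ again satisfies $L^2=M^2=N^2=LMN=-\mathrm{Id}$, formula~\eqref{equ: obata connection sol} holds verbatim with $(I,J,K)$ replaced by $(L,M,N)$. Repeating the computation with this triple produces the same operator $\nabla_X$, hence $\mathrm{tr}_{\R}(\nabla_X)=\mathrm{tr}(\mathrm{ad}_X)+\mathrm{tr}(L\,\mathrm{ad}_{LX})$ for every $L$; in particular the three right-hand sides agree.

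It remains to identify $\eta$ and $\mathrm{Ric}^{\mathrm{Ob}}$. Since $\nabla$ is torsion-free and preserves $I$, it induces a linear connection on the canonical bundle $\Lambda^{2n,0}_I$, whose connection $1$-form is, up to the overall sign convention, $-\mathrm{tr}_{\C,I}(\nabla_\bullet)$; as $\nabla_X$ is complex linear with respect to $I$ one has $\mathrm{tr}_{\C,I}(\nabla_X)=\tfrac12\mathrm{tr}_{\R}(\nabla_X)$, and this number is real because $\nabla$ also preserves $J$ — equivalently because the Obata holonomy lies in $\mathrm{GL}(n,\H)$. Substituting the trace computed above yields $\eta(X)=-\tfrac12\mathrm{tr}(\mathrm{ad}_X)-\tfrac12\mathrm{tr}(L\,\mathrm{ad}_{LX})$. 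Finally, $\mathrm{Ric}^{\mathrm{Ob}}$ is the curvature of this induced line-bundle connection, so $\mathrm{Ric}^{\mathrm{Ob}}=d\eta$; this is precisely the obstruction to every curvature endomorphism lying in $\mathfrak{sl}(n,\H)=\mathfrak{gl}(n,\H)\cap\mathfrak{sl}(2n,\C)$, i.e.\ to the restricted holonomy lying in $\mathrm{SL}(n,\H)$, in accordance with the references quoted before the lemma. Since $(I,J,K)$, and hence $\eta$, are left invariant, for left-invariant vector fields one has $d\eta(X,Y)=X(\eta(Y))-Y(\eta(X))-\eta([X,Y])=-\eta([X,Y])$, which is exactly the claimed formula for $\mathrm{Ric}^{\mathrm{Ob}}(X,Y)$.

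I do not anticipate a genuine obstacle: the first two paragraphs are a routine manipulation. The only points requiring care are the bookkeeping of the order of composition in passing from~\eqref{equ: obata connection sol} to the operator form of $\nabla_X$, and pinning down the normalisation of $\eta$ consistently with the identification $\mathrm{SL}(n,\H)=\mathrm{GL}(n,\H)\cap\mathrm{SL}(2n,\C)$. The latter is the source of the factor $\tfrac12$: it records the passage from the real trace to the complex trace $\mathrm{tr}_{\C,I}$, and the fact that $\mathrm{GL}(n,\H)/\mathrm{SL}(n,\H)\cong\R^{+}$ is detected by $\tfrac12\mathrm{tr}_{\R}=\mathrm{tr}_{\C,I}$.
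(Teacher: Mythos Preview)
Your argument is correct and is genuinely different from the paper's. The paper computes $\eta$ by introducing an auxiliary hyperhermitian metric $g$, forming the $(2,0)$-form $\Omega=\tfrac12(\omega_J+i\omega_K)$, invoking the identity $\bar\alpha=\Lambda_\Omega(\bar\partial\Omega)$ from \cite{FG}, and then expanding $\bar\partial\Omega$ in an adapted orthonormal basis; only at the end does it collapse the answer to a pair of traces. You bypass all of this by simply reading off the endomorphism $\nabla_X$ from Soldatenkov's formula \eqref{equ: obata connection sol} and applying cyclicity of the real trace together with $J^2=-\mathrm{Id}$ and $JK=I$. Your route is shorter, does not need the auxiliary metric or the $\Lambda_\Omega$-formalism, and makes the independence of the choice of $L$ transparent via the cyclic symmetry of the Obata connection. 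The paper's route, on the other hand, ties $\eta$ directly to the hyperhermitian geometry (e.g.\ to the Lee form in the HKT case), which is precisely what is used immediately afterwards in Proposition~\ref{Prop:Lee}; so its detour pays off in the applications that follow.
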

\begin{proof}
Since $\eta$ does not depend on the chosen complex structure, it suffices to prove the Lemma for $L=I$.\par 
Let $\alpha$ denote the $(1,0)$ part of $\eta$ with respect to $I$. Then, according to \cite[Lemma 3.3]{FG}, $\bar \alpha = \Lambda_\Omega(\bar \partial \Omega)$, where
\[
\Omega=\frac{\omega_J+i\omega_K}{2}
\]
is the $(2,0)$ form associated to a given hyperhermitian metric $g$ compatible with $(I,J,K)$. \par
Fix $W\in \g^{1,0}$ and an orthonormal basis $\{e_1,\dots,e_{4n}\}$ adapted to $(I,J,K)$. Set $Z_i=e_{2i-1}-iIe_{2i-1}=e_{2i-1}-ie_{2i}$, for $i=1,\dots,2n$. With respect to this basis we have
\[
\Omega= \frac{1}{2}\sum_{i=1}^n Z^{2i-1} \wedge Z^{2i}\,. 
\]
We compute
\[
\begin{split}
\bar \alpha(\bar W)&=(\Lambda\bar \partial \Omega)(\bar W)\\
&=\frac{1}{2}\sum_{i=1}^{n}\bar \partial \Omega(\bar W,Z_{2i-1},Z_{2i})\\
&=\frac{1}{2}\sum_{i=1}^{n}\left( -\Omega([\bar W,Z_{2i-1}],Z_{2i}) +\Omega([\bar W,Z_{2i}],Z_{2i-1})-\Omega([Z_{2i-1}, Z_{2i}],\bar W)\right).
\end{split}
\]
Since $\Omega$ is of type $(2,0)$ the last term vanishes, yielding
\[
\begin{split}
\bar \alpha(\bar W)&=\frac{1}{2}\sum_{i=1}^{n} \left( -\Omega([\bar W,Z_{2i-1}],Z_{2i}) +\Omega([\bar W,Z_{2i}],Z_{2i-1})\right)\\
&=-\frac{1}{2}\sum_{j=1}^{2n} \Omega([\bar W,Z_{j}],J\bar Z_{j})\\
&=-\frac{1}{2}\sum_{j=1}^{2n} g([\bar W,Z_{j}],\bar Z_{j})\,.
\end{split}
\]
In terms of the real basis we rewrite this as
\[
\begin{split}
\bar \alpha(\bar W)&=-\frac12\sum_{j=1}^{2n} g([\bar W,e_{2j-1}-ie_{2j}],e_{2j-1}+ie_{2j})\\
&=-\frac12\sum_{j=1}^{2n} \left( g([\bar W,e_{2j-1}],e_{2j-1}) +g([\bar W,e_{2j}],e_{2j})-i g([\bar W,e_{2j}],e_{2j-1})+ig([\bar W,e_{2j-1}],e_{2j})\right)\\
&=-\frac12\sum_{k=1}^{4n}g([\bar W,e_k],e_k)-\frac{i}{2}\sum_{k=1}^{4n}g([\bar W,e_{k}],Ie_{k})\\
&=-\frac12\mathrm{tr}(\mathrm{ad}_{\bar W})+\frac{i}{2}\,\mathrm{tr}(I\mathrm{ad}_{\bar W})\,.
\end{split}
\]
We conclude that for every $X\in \g$
\[
\eta(X)=2\Re \left(\bar \alpha(X^{0,1}) \right)=-\frac12 \mathrm{tr}(\mathrm{ad}_X)-\frac12\mathrm{tr}(I\mathrm{ad}_{IX})\,.
\]
Finally, from \cite[Proposition 3.4 (d)]{FG} we have  
\[
\mathrm{Ric}^{\mathrm{Ob}}(X,Y)=d\eta(X,Y)=\frac12\mathrm{tr}(\mathrm{ad}_{[X,Y]})+\frac12\mathrm{tr}(I\mathrm{ad}_{I[X,Y]})
\]
as claimed.
\end{proof}

We now compute the Obata connection $1$-form on Joyce hypercomplex manifolds.

\begin{prop}\label{Prop:Lee}
Let $G$ be a compact simple Lie group and $(I,J,K)$ a Joyce hypercomplex structure on $\mathbb{T}^{\ell}\times G$. 
Then the Obata connection $1$-form $\eta$ on $\Lambda^{2n,0}_I(\ell \mathfrak{u}(1) \oplus \g)$ takes the following expression
\[
\eta(X)= 2\sum_{j=1}^m \frac{1}{\lambda_j^2} \left(1+\dim_{\H}(\mathfrak{f}_j)\right)h(e_3^j, JX),
\]
where $h$ is a bi-invariant metric, and $\lambda_j^2=h(e_3^j,e_3^j)$.
\end{prop}
\begin{proof}
From Lemma \ref{lem:Obata1form}, using the unimodularity of $\mathbb{T}^{\ell}\times G$, we obtain
\begin{align*}
\eta(X)=-\frac12\mathrm{tr}(J\mathrm{ad}_{JX})=-\frac12\mathrm{tr}(\mathrm{ad}_{JX}J)= -\frac12\sum_{i=1}^{4n} h([JX,Je_i],e_i)=\frac{1}{2}\sum_{i=1}^{4n}h(JX,[e_i,Je_i])\,,
\end{align*}
where $\{e_1,\dots,e_{4n}\}$ is an orthonormal basis with respect to $h$. \par
Since $h$ is a bi-invariant metric on $\mathbb{T}^{\ell}\times G$, $h_{| \g \times \g}$ is a multiple of the Killing-Cartan form on $\g$. Therefore, the Joyce decomposition of $\g$ is $h$-orthogonal \cite{GP}. \par 
For the sake of computations, let us fix the orthogonal basis $\{v_1, e_2^1, e_3^1, e_4^1,\dots,v_m,\dots,e_4^m\}$ on $\ell \mathfrak{u}(1)\oplus\mathfrak{b}\oplus \bigoplus_{j=1}^m \mathfrak{d}_j$ as follows: $\{v_1, \dots, v_m\}$ is an orthonormal basis of $\ell \mathfrak{u}(1)\oplus\mathfrak{b}$, and $\{e_2^j, e_3^j, e_4^j\}$ is an orthogonal basis of $\mathfrak{d}_j$ for each $j$. Observe that the bi-invariance of $h$ forces $h(e_2^j, e_2^j)=h(e_3^j, e_3^j)=h(e_4^j,e_4^j)$.  
 Let $4d_j:=\dim_\R(\mathfrak{f}_j)$. On the subspaces $\mathfrak{f}_j$ we pick instead any orthonormal basis $\{f_1^j,\dots,f_{4d_j}^j\}$ adapted to $I,J,K$, that is, $f_{4k-2}^j=If_{4k-3}^j$, \ $f_{4k-1}^j=Jf_{4k-3}^j$, \ $f_{4k}^j=Kf_{4k-3}^j$, for any $k=1,\dots,d_j$. This is possible since the subspaces $\f_j$ are $h$-orthogonal and, by the bi-invariance of $h$ and the definition of $I,J,K$ on $\f_j$, the restriction $h_{| \f_j \times \f_j}$ is hyperhermitian. \par
It is straightforward to check that for all $j=1,\dots,m$ we have 
\[ [v_j, Jv_j]+\sum_{k=2}^4 [e_k^j,Je_k^j]= [e_2^j,Je_2^j]+[e_4^j,Je_4^j] =4e_3^j.\] 
Furthermore, it is computed in \cite{FG} that \[\sum_{k=1}^{4d_j} [f_k^j,Jf_k^j]=4d_je_3^j.\] We repeat the calculation for the readers' convenience. First of all we note that
\begin{align*}
&[f^j_{4k-3},Jf^j_{4k-3}]+[f^j_{4k-2},Jf^j_{4k-2}]+[f^j_{4k-1},Jf^j_{4k-1}]+[f^j_{4k},Jf^j_{4k}]\\
&\quad =[f^j_{4k-3},f^j_{4k-1}]-[f^j_{4k-2},f^j_{4k}]-[f^j_{4k-1},f^j_{4k-3}]+[f^j_{4k},f^j_{4k-2}]\\
&\quad =2\left([f^j_{4k-3},Jf^j_{4k-3}]+[f^j_{4k-2},Jf^j_{4k-2}]  \right).
\end{align*}
By definition of the hypercomplex structure on $ \mathfrak{f}_j $ and using the Jacobi identity, we have:
\[
\begin{split}
[f^j_{4k-3},Jf^j_{4k-3}]+[f^j_{4k-2},Jf^j_{4k-2}] &=[f^j_{4k-3},Kf^j_{4k-2}]-[f^j_{4k-2},Kf^j_{4k-3}]\\
&=[f^j_{4k-3},[e^j_4,f^j_{4k-2}]]-[f^j_{4k-2},[e^j_{4},f^j_{4k-3}]]\\
&=[e^j_4,[f^j_{4k-3},f^j_{4k-2}]]\,.
\end{split}
\]
Therefore, summing over $k$ we get:
\begin{equation}\label{eqn}
\sum_{i=1}^{4d_j} [f_i^j,Jf_i^j]=2\sum_{k=1}^{d_j} [e^j_4,[f^j_{4k-3},f^j_{4k-2}]]\,.
\end{equation}
To conclude, we show that for any fixed $ k\in\{1, \ldots, d_j\} $ the bracket $[e_4^j,[f^j_{4k-3},f^j_{4k-2}]]$ has non-zero component only along $e_3^j$. To do this we use the bi-invariant metric $h$ and the properties of the Joyce decomposition. First
\[
h([e^j_4,[f^j_{4k-3},f^j_{4k-2}]],X)=-h([f^j_{4k-3},f^j_{4k-2}],[e^j_4,X])\,,
\]
which clearly vanishes for all $X\in \ell\mathfrak{u}(1)\oplus \mathfrak{b}\oplus \bigoplus_{k\neq j} \mathfrak{d}_k\oplus \bigoplus_{l>j} \mathfrak{f}_l\oplus \langle e_4^j \rangle$ thanks to \eqref{eqn:joyce1}, \eqref{eqn:joyce2}, \eqref{eqn:joyce3}. On the other hand, if $X=e_2^j$ we have
\[
\begin{split}
h([e^j_4,[f^j_{4k-3},f^j_{4k-2}]],e_2^j)&=-2h([f^j_{4k-3},f^j_{4k-2}],e_3^j)\\
&=-2h(f^j_{4k-3},[f^j_{4k-2},e_3^j])\\
&=-2h(f^j_{4k-3},f^j_{4k})=0\,,
\end{split}
\]
whereas if $X=e_3^j$ we get
\[
\begin{split}
h([e^j_4,[f^j_{4k-3},f^j_{4k-2}]],e_3^j)&=2h([f^j_{4k-3},f^j_{4k-2}],e_2^j)\\
&=2h(f^j_{4k-3},[f^j_{4k-2},e_2^j])\\
&=2h(f^j_{4k-3},f^j_{4k-3})=2\,.
\end{split}
\]
Now, assume $X\in \mathfrak{f}_l$ with $ l< j $ then
\[
\begin{split}
h([e^j_4,[f^j_{4k-3},f^j_{4k-2}]],X)&=-h([e^j_4,[f^j_{4k-3},f^j_{4k-2}]],I^2X)\\
&=-h([e^j_4,[f^j_{4k-3},f^j_{4k-2}]],[e_2^l,IX])\\
&=\ \  h([e_2^l,[e^j_4,[f^j_{4k-3},f^j_{4k-2}]]],IX)\\
&=\ \  h([[e_2^l,e^j_4],[f^j_{4k-3},f^j_{4k-2}]],IX)+h([e^j_4,[e_2^l,[f^j_{4k-3},f^j_{4k-2}]]],IX)=0\,,
\end{split}
\]
where the last equality holds by using Jacobi identity and  \eqref{eqn:joyce2}--\eqref{eqn:joyce3}. Finally if $X\in \mathfrak{f}_j$
\[
\begin{split}
h([e^j_4,[f^j_{4k-3},f^j_{4k-2}]],X)&=-h([f^j_{4k-3},f^j_{4k-2}],[e^j_4,X])=-h([f^j_{4k-3},f^j_{4k-2}],KX)\\
&=-h([f^j_{4k-3},f^j_{4k-2}],[e^j_2,JX])=h([e^j_2,[f^j_{4k-3},f^j_{4k-2}]],JX)\\
&=\ \ h([[e^j_2,f^j_{4k-3}],f^j_{4k-2}],JX)+h([f^j_{4k-3},[e^j_2,f^j_{4k-2}]],JX)\\
&=\ \ h([f^j_{4k-2},f^j_{4k-2}],JX)-h([f^j_{4k-3},f^j_{4k-3}],JX)=0\,.
\end{split}
\]
Putting everything together we have shown that $[e^j_4,[f^j_{4k-3},f^j_{4k-2}]]=2 \frac{e^j_3}{\lambda_j^2} $, where $\lambda_j=\sqrt{h(e_3^j,e_3^j)}$. In order to compute $\eta$, we now consider the orthonormal basis $\{ e_1,\dots,e_{4n}\}=\{v_j, \frac{e_2^j}{\lambda_j},\dots,\frac{e_{4}^j}{\lambda_j},f_i^k\}$. We therefore obtain from \eqref{eqn}:
\[
\begin{split}
\sum_{i=1}^{4n} [e_i,Je_i]&=\sum_{j=1}^m \left([v_j, J v_j]+\sum_{k=2}^4 \frac{1}{\lambda_j^2} [e_k^j,Je_k^j]+\sum_{i=1}^{4d_j}[f_i^j,Jf_i^j] \right)\\
&=\sum_{j=1}^m \left(\frac{4}{\lambda_j^2} e_3^j +2  \sum_{k=1}^{d_j}[e_4^j,[f_{4k-3}^j,f_{4k-2}^j]] \right)\\
&=4\sum_{j=1}^m\frac{1}{\lambda_j^2}\left(1+d_j\right)e_3^j,
\end{split}
\]
implying
\[
\eta(X)=2\sum_{j=1}^{m}\frac{1}{\lambda_j^2}\left(1+d_j\right)h(JX,e_3^j)\,,
\]
as claimed.
\end{proof}

\begin{cor}\label{Cor:Leeclosed}
Let $G$ be a compact simple Lie group of rank $r$ and $(I,J,K)$ a Joyce hypercomplex structure on $\mathbb{T}^{\ell}\times G$. Then the Obata connection $1$-form is closed if and only if
\begin{equation}\label{eqn:SLconditions}
\sum_{j=1}^{m}\frac{1}{\lambda_j^2}\left(1+d_j\right)h(JX,e_3^j)=0, \qquad \text{for all } X\in \mathfrak{b}.
\end{equation}
In particular, when $(I,J,K)$ is a Joyce hypercomplex structure compatible with a bi-invariant metric $g$ constructed as in Section \ref{section:HKT}, $d\eta=0$ if and only if $\ell=m=r$, i.e. if and only if the abelian summand $\mathfrak{b}$ in the Joyce decomposition $\eqref{eqn:Joycedec}$ is trivial.
\end{cor}
\begin{proof} 
Since the Lie algebra $\mathfrak{g}$ is simple, the commutator of $\ell\mathfrak{u}(1)\oplus \g$ is $\g$, implying that 
\[
d\eta=-\eta\vert_{\mathfrak{g}}\,.
\]
On the other hand $\eta$ always vanishes on $\bigoplus_{j=1}^m \mathfrak{d}_j \oplus \bigoplus_{j=1}^m \mathfrak{f}_j$, thus $d\eta=0$ if and only if $\eta\vert_\mathfrak{b}=0$.

When $h$ coincides with a bi-invariant metric $g$ constructed as in Section \ref{section:HKT}, we can write
\[
\eta(X)=2\sum_{j=1}^{m}\frac{1}{\lambda_j^2}\left(1+d_j\right)g(JX,Je_1^j)=2\sum_{j=1}^{m}\frac{1}{\lambda_j^2}\left(1+d_j\right)g(X,e_1^j)
\]
which allows us to conclude that $d\eta=0$ if and only if $e_1^j\in \ell \mathfrak{u}(1)$ for all $j=1,\dots,m$, i.e. if and only if $\ell=2m-r=m$ \footnote{Recall that, in Section \ref{section:HKT}, the basis $\{e_1^j\}$ is chosen so that the first $\ell$ vectors are in $\ell\mathfrak{u}(1)$ and the remaining $m-r$ are in $\bi$.}. 
\end{proof}

Now we shall focus on those Joyce hypercomplex structures admitting a (strong) HKT metric obtained by extending the Killing--Cartan form (see Section~\ref{section:HKT}). \par

Such strong HKT metrics are related to the \emph{twisted Calabi--Yau system} introduced in \cite{GRST}. Indeed, on a hyperhermitian manifold $(M,I,J,K,g)$, the $(2,0)$ form $\Omega$ is non-degenerate, namely $\Psi=\Omega^n$ is a complex volume form of constant norm. Furthermore, the hyperhermitian structure is HKT if and only if $\partial \Omega=0$ \cite{GP} and in this case we have 
$$
d\Psi=\theta \wedge \Psi\,,
$$
which is the second equation of the twisted Calabi-Yau system \eqref{eqtCY} (in fact it is enough to have $\partial \Omega^{n-1}=0$ \cite{FG}). If the HKT metric is strong, then $\omega_I=g(I\cdot,\cdot)$ satisfies the pluriclosed condition $\partial \bar\partial \omega_I=0$. In conclusion, in order for the Killing--Cartan form to provide a solution to the system we have to further impose that $d\theta=0$.

We are ready to prove Theorem \ref{THM:B}:
\begin{thm}\label{main2}
Let $M$ belong to the list $\eqref{list}$. Then any left-invariant hypercomplex structure $(I,J,K)$ on $M$ has restricted Obata holonomy contained in ${\rm SL}(n,\H)$. Moreover, $(M,I)$ admits a left-invariant solution to the twisted Calabi-Yau system $\eqref{eqtCY}$. 
\end{thm}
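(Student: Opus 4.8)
The plan is to treat the two assertions separately, drawing on the HKT formalism already set up. For the first assertion—restricted holonomy in $\mathrm{SL}(n,\H)$—I would argue as follows. By Remark \ref{rmk:uniqueness}, when $\mathfrak{b}=0$ all Joyce hypercomplex structures on $M$ have isomorphic Obata holonomy algebras, so it suffices to check the claim for one convenient choice, namely a structure compatible with an extension of the Killing--Cartan form as in Section \ref{section:HKT}; such a compatible structure always exists, and when $\mathfrak{b}=0$ in fact \emph{every} Joyce structure is of this type. For such a structure we have a bi-invariant strong HKT metric $g$, and by the discussion preceding Proposition \ref{Prop:Lee} the Obata-Ricci tensor equals $d\theta$, where $\theta$ is the Lee form. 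Now invoke Corollary \ref{Cor:Leeclosed}: since every group in the list $\eqref{list}$ has trivial abelian summand $\mathfrak{b}$ (this is exactly how the list was characterised), we get $d\theta=0$, hence $\mathrm{Ric}^{\mathrm{Ob}}=0$. By \cite[Section 6]{Sal86}, \cite[Theorem 5.6]{AM}, vanishing of the Obata-Ricci tensor is equivalent to the restricted holonomy lying in $\mathrm{SL}(n,\H)$. This establishes the first claim for one structure, and Remark \ref{rmk:uniqueness} propagates it to all left-invariant hypercomplex structures on $M$.

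For the second assertion I would exhibit the twisted Calabi--Yau data explicitly on the same HKT structure $(I,J,K,g)$ just used. Take $\omega=\omega_I=g(I\cdot,\cdot)$ and the complex volume form $\Psi=\Omega^n$, where $\Omega=\tfrac12(\omega_J+i\omega_K)$ is the $(2,0)$-form; since $g$ is hyperhermitian, $\Psi$ has constant norm. The three conditions in $\eqref{eqtCY}$ are verified in turn: the strong HKT condition gives $dd^c_I\omega_I=0$, which is precisely $\partial\bar\partial\omega=0$; the HKT condition is equivalent to $\partial\Omega=0$ (see \cite{GP}, or it suffices to have $\partial\Omega^{n-1}=0$ by \cite{FG}), from which $d\Psi=\theta\wedge\Psi$ follows with $\theta$ the Lee form; and $d\theta=0$ is exactly what Corollary \ref{Cor:Leeclosed} provides under the standing hypothesis $\mathfrak{b}=0$. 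Moreover one should check that the $\theta$ appearing in $d\Psi=\theta\wedge\Psi$ coincides with the Lee form $Id^*\omega$ of the twisted Calabi--Yau system; this identification is standard in the hyperhermitian setting (cf. \cite[Proposition 3.4]{FG}) and holds because on an HKT manifold the Obata connection $1$-form, the Lee form of $\omega_I$, and the torsion $1$-form all agree. All the objects involved are left-invariant, so the resulting solution is left-invariant.

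The main obstacle, such as it is, is bookkeeping rather than conceptual: one must be careful that the single compatible Joyce structure for which the HKT machinery works has the \emph{same} Obata holonomy algebra as an arbitrary left-invariant structure on $M$—this is exactly the content of Remark \ref{rmk:uniqueness} and relies on $\mathfrak{b}=0$, so the hypothesis $M\in\eqref{list}$ is used essentially here. A secondary point requiring care is the precise normalisation matching $\theta=Id^*\omega$ with the Lee form computed in Proposition \ref{Prop:Lee}; once the sign and factor conventions are pinned down (following \cite{FG,IP}), the verification of $\eqref{eqtCY}$ reduces to quoting the strong HKT property and Corollary \ref{Cor:Leeclosed}. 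I would also remark that the second assertion concerns the fixed complex structure $I$ of the chosen compatible structure, which is the natural reading of the statement.
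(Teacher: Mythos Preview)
Your proposal is correct and follows essentially the same route as the paper: both arguments rest on the observation that for the groups in \eqref{list} the abelian summand $\mathfrak{b}$ vanishes, so every Joyce hypercomplex structure is compatible with an extension of the Killing--Cartan form, whence Corollary~\ref{Cor:Leeclosed} gives $d\theta=0$ and the strong HKT data $(g,\Psi=\Omega^n)$ solve \eqref{eqtCY}; the $\mathrm{SL}(n,\H)$ reduction then follows from the equivalence between vanishing Obata--Ricci and restricted holonomy in $\mathrm{SL}(n,\H)$ (the paper cites \cite[Corollary~4.2]{IP} here, which packages the same implication you draw from \cite{Sal86,AM}). Your appeal to Remark~\ref{rmk:uniqueness} is harmless but unnecessary: since you yourself note that \emph{every} Joyce structure is compatible when $\mathfrak{b}=0$, the argument applies directly to each structure without any propagation step.
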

\begin{proof}
Recall that any left-invariant hypercomplex structure on $M$ arises from the Joyce construction in Section \ref{sec:hcx}. 
According to \cite[Section 6]{SSTvP} (see also \cite[Section 5]{OP}) a compact simple Lie group has trivial $\bi$ in its Joyce decomposition if and only if it is one of the following:
\[
\mathrm{SO}(2k+1),
\ \  \mathrm{SO}(4k) , \ \ \mathrm{Sp}(k) ,\ \ \mathrm{E}_7, \ \  \mathrm{E}_8, \ \ \mathrm{F}_4, \ \ \mathrm{G}_2,
\]
and therefore any Joyce hypercomplex structure on a manifold $M$ in the list \eqref{list} satisfies the hypothesis of Corollary \ref{Cor:Leeclosed}. Indeed, the condition of having trivial summand $\mathfrak{b}$ in the Joyce decomposition has the consequence that every Joyce hypercomplex structure is compatible with an extension $g$ of the Killing--Cartan form (see Section \ref{section:HKT}). Since the Ricci tensor of the Obata connection is precisely $d\eta$, the Obata connection has restricted holonomy group in $\mathrm{SL}(n,\H)$.

Furthermore, under these hypotheses $g$ is strong HKT and $\eta$ coincides with the Lee form of $g$ \cite[Proposition 4.1]{IP}. 
We conclude that the pair $(g,\Psi=\Omega^n)$ provides a solution to the twisted Calabi-Yau equations \eqref{eqtCY}.
\end{proof}

\begin{rmk}
When $\bi \neq 0$ in the Joyce decomposition of $\g$, not every Joyce hypercomplex structure is constructed as in Section \ref{section:HKT}; however, there are infinitely many that are. For these, the restricted holonomy of the Obata connection $\mathrm{Hol}_0(\nabla)$ is never a subgroup of $\mathrm{SL}(n,\H)$, by Corollary \ref{Cor:Leeclosed}. This occurs in the following Joyce hypercomplex manifolds:
 \[
 S^1\times \mathrm{SU}(2k) \ (k \geq 2), \quad \mathrm{SU}(2k+1),   \quad \mathbb{T}^{2k-1}\times \mathrm{SO}(4k+2), \quad  \mathbb{T}^2 \times \mathrm{E}_6.
 \]
Except for $\mathrm{SU}(2k+1)$ all these manifolds admit Joyce hypercomplex structures with $\mathrm{Hol}_0(\nabla)\subseteq \mathrm{SL}(n,\H)$ and others with $\mathrm{Hol}_0(\nabla) \not \subseteq \mathrm{SL}(n,\H)$. The case of $\mathrm{SU}(2k+1)$ is different because there is no torus part. Being simply connected, $\mathrm{Hol}_0(\nabla)=\mathrm{Hol}(\nabla)$. Moreover, as shown in \cite{BDV,SV}, no Joyce hypercomplex manifold is an $\mathrm{SL}(n,\mathbb{H})$-manifold; in particular, no Joyce hypercomplex structure on $\mathrm{SU}(2k+1)$ has restricted holonomy contained in $\mathrm{SL}(n,\mathbb{H})$. This can also be seen from the criterion of Corollary \ref{Cor:Leeclosed}. 
\end{rmk}

\begin{ex}
Consider the Joyce decomposition
\[
\mathfrak{su}(4)=\mathfrak{b}\oplus \mathfrak{d}_1 \oplus \mathfrak{f}_1 \oplus \mathfrak{d}_2
\]
where $\dim(\mathfrak{b})=1$ and $\dim (\mathfrak{f}_1)=8$. Let $(I,J,K)$ be a Joyce hypercomplex structure on $S^1 \times \mathrm{SU}(4)$ associated to the basis $(e_1^1,e_1^2)$ of $\mathfrak{u}(1)\oplus \mathfrak{b}$ such that $X=ae_1^1+be_1^2$ is a generator of $\mathfrak{b}$. Let $h$ be any bi-invariant metric on $S^1 \times \mathrm{SU}(4)$.  Then, the condition \eqref{eqn:SLconditions} translates into the equation
\[
0=\frac{1}{\lambda_1^2}(1+8)h(ae_3^1+be_3^2,e_3^1)+\frac{1}{\lambda_2^2}h(ae_3^1+be_3^2,e_3^2)=9a+b
\]
thus, $\mathrm{Hol}_0(\nabla)\subseteq \mathrm{SL}(4,\H)$ if and only if $\mathfrak{b}=\langle e_1^1-9e_1^2 \rangle$.
\end{ex}

In order to produce more examples of solutions to the twisted Calabi-Yau equation, we can use the method introduced by Barberis and Fino in \cite{BF}.

\begin{cor}\label{CortCY}
Let $(\mathfrak{g},I,J,K,g)$ be the strong HKT Lie algebra of one of the groups listed in $\eqref{list}$, with $g$ bi-invariant, and let $\rho \colon \mathfrak{g} \to \mathfrak{sp}(r)$ be a Lie algebra homomorphism. Consider the semidirect product $\mathfrak{g}\ltimes_\rho \H^r$ and the hyperhermitian structure $(\tilde I,\tilde J,\tilde K,\tilde g)$, where $\tilde g$ is the product of $g$ and the natural inner product on $\H^r$ and for any $(X,q)\in \mathfrak{g}\ltimes_\rho \H^r$, let
\[
\tilde I(X,q)=(IX,iq)\,, \ \  \tilde J(X,q)=(JX,jq)\,,\ \  \tilde K(X,q)=(KX,kq)\, .
\]
Then $(\tilde I,\tilde J,\tilde K,\tilde g)$ is a strong HKT structure with closed Lee form. In particular it is a solution to the twisted Calabi-Yau equations. 
\end{cor}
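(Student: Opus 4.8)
The plan is to derive the corollary by combining the Barberis--Fino semidirect-product construction with the Lee-form formula of Lemma~\ref{lem:Obata1form}. First I would recall why the construction is admissible: since $\rho$ takes values in $\mathfrak{sp}(r)$, which is exactly the structure algebra preserving the flat hyperk\"ahler structure on $\H^r$, the arguments of~\cite{BF} (together with~\cite{GP} for the characterisation of HKT by $\partial\Omega=0$) show that $(\tilde I,\tilde J,\tilde K)$ is an integrable hypercomplex structure on $\tilde{\g}:=\g\ltimes_\rho\H^r$, that $\tilde g$ is hyperhermitian with respect to it, and that the HKT and the strong HKT properties of $(\g,I,J,K,g)$ pass to $(\tilde{\g},\tilde I,\tilde J,\tilde K,\tilde g)$. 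Granting this, the only remaining point is to prove that the Lee form $\tilde\theta$ of the resulting structure is closed; then, arguing as in the proof of Theorem~\ref{main2}, the pair $(\tilde g,\tilde\Psi:=\tilde\Omega^{\tilde n})$, with $\tilde n$ the quaternionic dimension of $\tilde{\g}$, solves~\eqref{eqtCY}: HKT gives $d\tilde\Psi=\tilde\theta\w\tilde\Psi$, strong HKT gives $\partial\bar\partial\omega_{\tilde I}=0$, left-invariance makes $\tilde\Psi$ of constant norm, and $d\tilde\theta=0$ is the last equation.

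To compute $\tilde\theta$ I would use that for an HKT structure it coincides with the Obata connection $1$-form $\tilde\eta$ (cf.\ Proposition~\ref{Prop:Lee}) and apply Lemma~\ref{lem:Obata1form}: $\tilde\eta(Z)=-\tfrac12\mathrm{tr}(\mathrm{ad}_Z)-\tfrac12\mathrm{tr}(L\,\mathrm{ad}_{LZ})$ for any $L\in\{\tilde I,\tilde J,\tilde K\}$. Writing $Z=(X,q)$ and noting that $LZ=(LX,\ell q)$ with $\ell\in\{i,j,k\}$ acting on $\H^r$, the key observation is that relative to the vector-space splitting $\tilde{\g}=\g\oplus\H^r$ the operator $\mathrm{ad}_{(X,q)}$ is block lower-triangular: its $\g\to\g$ block is $\mathrm{ad}_X$, its $\H^r\to\H^r$ block is $\rho(X)$, and the remaining $\g\to\H^r$ block is irrelevant for any trace. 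Hence $\mathrm{tr}(\mathrm{ad}_Z)=\mathrm{tr}_{\g}(\mathrm{ad}_X)+\mathrm{tr}_{\H^r}\rho(X)$ and $\mathrm{tr}(L\,\mathrm{ad}_{LZ})=\mathrm{tr}_{\g}(L\,\mathrm{ad}_{LX})+\mathrm{tr}_{\H^r}(\ell\circ\rho(LX))$.

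The main obstacle --- really the only nontrivial step --- is to show that the two $\H^r$-terms vanish, which is precisely where the hypothesis $\rho(\g)\subseteq\mathfrak{sp}(r)$ is used essentially. Since $\mathfrak{sp}(r)$ is semisimple, hence unimodular, $\mathrm{tr}_{\H^r}\rho(X)=0$. For the other term, every $A\in\mathfrak{sp}(r)$ commutes with the complex structures $i,j,k$ on $\H^r$; using $A\,j=j\,A$ and the identity $j\,i\,j^{-1}=-i$ on $\H^r$ one gets $\mathrm{tr}(i\circ A)=\mathrm{tr}\big(j\,(i\circ A)\,j^{-1}\big)=-\mathrm{tr}(i\circ A)$, hence $\mathrm{tr}(i\circ A)=0$, and similarly for $j$ and $k$. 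Taking $A=\rho(LX)$ makes the second $\H^r$-term vanish too, so $\tilde\eta(X,q)=-\tfrac12\mathrm{tr}_{\g}(\mathrm{ad}_X)-\tfrac12\mathrm{tr}_{\g}(L\,\mathrm{ad}_{LX})=\eta(X)$, where $\eta$ denotes the Obata $1$-form of $(\g,I,J,K,g)$.

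Finally I would conclude as follows. The computed identity $\tilde\eta=\pi^*\eta$, equivalently $\tilde\theta=\pi^*\theta$ where $\pi\colon\tilde{\g}\to\g$ is the canonical projection (a Lie algebra homomorphism, since $\H^r$ is an ideal of $\tilde{\g}$) and $\theta$ is the Lee form of $(\g,I,J,K,g)$, together with the fact that $\pi^*$ is a morphism of Chevalley--Eilenberg complexes, gives $d\tilde\theta=\pi^*d\theta$. Now $\theta$ is closed by Corollary~\ref{Cor:Leeclosed}, because the abelian summand $\mathfrak{b}$ vanishes for every group in~\eqref{list}; therefore $d\tilde\theta=0$, and the twisted Calabi--Yau claim follows as in the first paragraph. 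Beyond the trace computation, everything is a direct transfer from~\cite{BF} and from results already established, so I do not anticipate further difficulties.
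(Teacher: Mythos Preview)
Your proof is correct and follows the same architecture as the paper's: invoke \cite{BF} for the strong HKT property, establish $\tilde\theta=\pi^*\theta$, and deduce closedness from Corollary~\ref{Cor:Leeclosed}. The only difference is that where the paper simply cites \cite[Proposition~3.1]{BF} for the identity $\tilde\theta=\theta\circ p$, you re-derive it directly from Lemma~\ref{lem:Obata1form} via the block lower-triangular form of $\mathrm{ad}_{(X,q)}$ and the vanishing of $\mathrm{tr}(\ell\circ A)$ for $A\in\mathfrak{sp}(r)$; this makes your argument more self-contained but is otherwise equivalent.
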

\begin{proof}
It is clear that $(\tilde g,\tilde I,\tilde J,\tilde K)$ provides a hyperhermitian structure on $\mathfrak{g} \ltimes_\rho \H^r$. It has been proved in \cite[Theorem 3.3]{BF} that $\tilde g$ is strong HKT if and only if $g$ is. Finally, in \cite[Proposition 3.1]{BF} it has also been observed that the Lee form $\tilde \theta$ of $\tilde g$ coincides with $\theta \circ p$, where $\theta$ is the Lee form of $g$ and $p \colon \mathfrak{g}\ltimes_\rho \H^r \to \mathfrak{g} $ is the orthogonal projection. Thus we deduce that $d\tilde \theta=0$ if and only if $d\theta=0$.
\end{proof}

Barberis and Fino also discuss cases in which the Lie algebras resulting from Corollary \ref{CortCY} admit co-compact lattices, thus yielding more compact examples.
\begin{rmk}
As observed in \cite{ABB}, the Obata connections $\widetilde{\nabla}$ and $\nabla$ associated to the hypercomplex structures $(\widetilde{I}, \widetilde{J}, \widetilde{K})$ and $(I, J, K)$, respectively, have the same holonomy algebra:
\[
\mathfrak{hol}(\widetilde{\nabla}) \cong \mathfrak{hol}(\nabla).
\]
In particular, by Theorem \ref{main2}, it follows that
$
\mathfrak{hol}(\widetilde{\nabla}) \subseteq  \mathfrak{sl}(n,\mathbb{H}).
$
\end{rmk}

\bibliography{biblio}

\begin{thebibliography}{GFRST22}

\bibitem[ABB26]{ABB}
A.~Andrada, M.~L. Barberis, and B.~Brienza.
\newblock {Holonomy of the {O}bata connection on 2-step hypercomplex
  nilmanifolds}.
\newblock {\em Bull. Lond. Math. Soc.}, 58(4):Paper No. e70338, 23, 2026.

\bibitem[Ale75]{Al}
D.~V. Alekseevskii.
\newblock Homogeneous {R}iemannian spaces of negative curvature.
\newblock {\em Mat. Sb. (N.S.)}, 96(138):93--117, 168, 1975.

\bibitem[AM96]{AM}
D.~V. Alekseevskii and S.~Marchiafava.
\newblock Quaternionic structures on a manifold and subordinated structures.
\newblock {\em Ann. Mat. Pura Appl. (4)}, 171:205--273, 1996.

\bibitem[BDV09]{BDV}
M.~L. Barberis, I.~Dotti, and M.~Verbitsky.
\newblock {Canonical bundles of complex nilmanifolds, with applications to
  hypercomplex geometry}.
\newblock {\em Math. Res. Lett.}, 16(2):331--347, 2009.

\bibitem[BF11]{BF}
M.~L. Barberis and A.~Fino.
\newblock {New {HKT} manifolds arising from quaternionic representations}.
\newblock {\em Math. Z.}, 267(3-4):717--735, 2011.

\bibitem[BGP11]{BGP}
L.~Bedulli, A.~Gori, and F.~Podestà.
\newblock {Homogeneous hyper-complex structures and the {J}oyce's
  construction}.
\newblock {\em Differential Geom. Appl.}, 29(4):547--554, 2011.

\bibitem[Boy88]{Bo}
C.~P. Boyer.
\newblock {A note on hyper-{H}ermitian four-manifolds}.
\newblock {\em Proc. Amer. Math. Soc.}, 102(1):157--164, 1988.

\bibitem[DT16]{DT}
G.~Dimitrov and V.~Tsanov.
\newblock {Homogeneous hypercomplex structures {I} -- the compact {L}ie
  groups}.
\newblock {\em Transform. Groups}, 21(3):725--762, 2016.

\bibitem[FG26]{FG}
E.~Fusi and G.~Gentili.
\newblock {Special metrics in hypercomplex geometry}.
\newblock {\em Adv. Math.}, 496:Paper No. 111001, 46, 2026.

\bibitem[GFRST22]{GRST}
M.~Garcia-Fernandez, R.~Rubio, C.~Shahbazi, and C.~Tipler.
\newblock Canonical metrics on holomorphic {C}ourant algebroids.
\newblock {\em Proc. Lond. Math. Soc. (3)}, 125(3):700--758, 2022.

\bibitem[GP00]{GP}
G.~Grantcharov and Y.~S. Poon.
\newblock {Geometry of hyper-{K}\"ahler connections with torsion}.
\newblock {\em Comm. Math. Phys.}, 213(1):19--37, 2000.

\bibitem[HP96]{HP}
P.~S. Howe and G.~Papadopoulos.
\newblock Twistor spaces for hyper-{K}\"ahler manifolds with torsion.
\newblock {\em Phys. Lett. B}, 379(1-4):80--86, 1996.

\bibitem[IP12]{IP}
S.~Ivanov and A.~Petkov.
\newblock {H{KT} manifolds with holonomy {${\rm SL}(n,\mathbb{H})$}}.
\newblock {\em Int. Math. Res. Not. IMRN}, (16):3779--3799, 2012.

\bibitem[Joy92]{Joy}
D.~Joyce.
\newblock Compact hypercomplex and quaternionic manifolds.
\newblock {\em J. Differential Geom.}, 35(3):743--761, 1992.

\bibitem[MS99]{MS}
S.~Merkulov and L.~Schwachh\"ofer.
\newblock {Classification of irreducible holonomies of torsion-free affine
  connections}.
\newblock {\em Ann. of Math. (2)}, 150(1):77--149, 1999.

\bibitem[Oba56]{Ob}
M.~Obata.
\newblock {Affine connections on manifolds with almost complex, quaternion or
  {H}ermitian structure}.
\newblock {\em Jpn. J. Math.}, 26:43--77, 1956.

\bibitem[OP98]{OP}
A.~Opfermann and G.~Papadopoulos.
\newblock {Homogeneous HKT and QKT manifolds}.
\newblock {\em arXiv:math-ph/9807026}, 1998.
\newblock \url{https://arxiv.org/abs/math-ph/9807026}.

\bibitem[Sal82]{Sal}
S.~Salamon.
\newblock {Quaternionic {K}\"ahler manifolds}.
\newblock {\em Invent. Math.}, 67(1):143--171, 1982.

\bibitem[Sal86]{Sal86}
S.~Salamon.
\newblock Differential geometry of quaternionic manifolds.
\newblock {\em Ann. Sci. \'Ecole Norm. Sup. (4)}, 19(1):31--55, 1986.

\bibitem[Sam53]{Sam}
H.~Samelson.
\newblock {A class of complex-analytic manifolds}.
\newblock {\em Portugal. Math.}, 12:129--132, 1953.

\bibitem[Sol12]{Sol}
A.~Soldatenkov.
\newblock {Holonomy of the Obata connection on $\mathrm{SU}(3)$}.
\newblock {\em Int. Math. Res. Not. IMRN}, (15):3483--3497, 2012.

\bibitem[SSTVP88]{SSTvP}
P.~Spindel, A.~Sevrin, W.~Troost, and A.~Van~Proeyen.
\newblock {Extended supersymmetric {$\sigma$}-Models on group manifolds (I).
  The complex structures}.
\newblock {\em Nuclear Phys. B}, 308(2):662--698, 1988.

\bibitem[SV12]{SV}
A.~Soldatenkov and M.~Verbitsky.
\newblock {Subvarieties of hypercomplex manifolds with holonomy in
  $\mathrm{SL}(n,\mathbb{H})$}.
\newblock {\em J. Geom. Phys.}, 62(11):2234--2240, 2012.

\bibitem[Wol65]{Wo}
J.~A. Wolf.
\newblock {Complex homogeneous contact manifolds and quaternionic symmetric
  spaces}.
\newblock {\em J. Math. Mech.}, 14:1033--1047, 1965.

\end{thebibliography}
\bibliographystyle{alpha}

\noindent
B. Brienza, L. Vezzoni \\
Dipartimento di Matematica ``G. Peano'', Universit\`{a} degli studi di Torino \\
Via Carlo Alberto 10, Torino (Italy) \\
Email: \texttt{beatrice.brienza@unito.it} \\
Email: \texttt{luigi.vezzoni@unito.it}

\vskip0.8em
\noindent
U. Fowdar \\ 
Institute of Mathematics, University of Warsaw, 
Banacha 2, 02-097 Warszawa (Poland)\\
Email: \texttt{u.fowdar@uw.edu.pl}

\vskip0.8em
\noindent
G. Gentili \\ 
Laboratoire de mathématiques d'Orsay, Université Paris-Saclay, 91405 Orsay (France)\\
Email: \texttt{giovanni.gentili@universite-paris-saclay.fr}
\end{document}